\newcommand\NoBlackBoxes{\global\overfullrule0pt}
\numberwithin{equation}{section}
\newtheorem{theorem}{Theorem}[section]
\newtheorem{proposition}[theorem]{Proposition}
\newtheorem{lemma}[theorem]{Lemma}
\newtheorem{definition}[theorem]{Definition}
\newtheorem{condition}[theorem]{Conditions}
\newtheorem{remark}[theorem]{Remark}
\newtheorem{example}[theorem]{Example}
\begin{document}
	
\vspace{2mm}

\title{Second Order Concentration via Logarithmic Sobolev Inequalities}
\author{F. G\"{o}tze}
\address{Friedrich G\"{o}tze, Faculty of Mathematics, Bielefeld University, Bielefeld, Germany}
\email{goetze@math.uni-bielefeld.de}
\author{H. Sambale}
\address{Holger Sambale, Faculty of Mathematics, Bielefeld University, Bielefeld, Germany}
\email{hsambale@math.uni-bielefeld.de}

\subjclass{Primary 60E15, 60F10, 60B20, 62F40}
\keywords{Concentration of measure phenomenon, logarithmic Sobolev inequalities, Hoeffding decomposition, functions on the discrete cube, Bootstrap approximation}
\thanks{This research was supported by CRC 701.}
\begin{abstract}
	We show sharpened forms of the concentration of measure phenomenon centered at first order stochastic expansions. The bound are based
	on second order difference operators and second order derivatives. Applications to functions on the discrete cube and stochastic
	Hoeffding type expansions in mathematical statistics are studied as well as eigenvalue statistics in random matrix theory.
\end{abstract}
\date{\today}

\maketitle

\section{Introduction}

The concentration of measure phenomenon for product measures has been extensively studied in the past decades.
It was established by M. Talagrand in the 1990s \cite{T1}, \cite{T2}. 
Further research was done by S. Bobkov, M. Ledoux and others
\cite{L1}, \cite{B-G1}, \cite{B-G2}. For a comprehensive survey which
summarizes the central concentration of measure results up to the end of the 1990s see the monographs by M. Ledoux \cite{L2}, \cite{L3}, for a more recent one see \cite{B-L-M2}.

One of the basic results due to M. Talagrand are concentration inequalities for Lipschitz functions around their mean or
median. For instance, in discrete probability models, the product
probability space $(\Omega, \mathcal{A}, \mu) := \otimes_{i=1}^n (\Omega_i, \mathcal{A}_i, \mu_i)$ is typically equipped
with the Hamming distance $d(x,y) := \text{card} \{k = 1, \ldots, n \colon x_k \ne y_k \}$. A related approach, which is
essentially due to M. Ledoux \cite{L1}, makes use of certain ``difference operators''. That is, for any function $f \colon
\Omega \to \mathbb{R}$ in $L^2(\mu)$, set
\begin{equation}\label{DiskrDiff2}
\mathfrak{d}_i f(x) := \Big(\frac{1}{2} \int_{\Omega_i}(f(x)-f(x_1, \ldots, x_{i-1}, y_i, x_{i+1}, \ldots, x_n))^2 \mu_i(dy_i)\Big)^{1/2}
\end{equation}
and $\mathfrak{d} f := (\mathfrak{d}_1f, \ldots, \mathfrak{d}_nf)$.
A slight modification of \cite[Proposition 2.1]{B-G2} then yields

\begin{proposition}
\label{1.Ordn}
Let $(\Omega_i, \mathcal{A}_i, \mu_i)$ be probability spaces, and denote by $(\Omega, \mathcal{A}, \mu) := \otimes_{i=1}^n
(\Omega_i, \mathcal{A}_i, \mu_i)$ their product.
Moreover, let $f \colon \Omega \to \mathbb{R}$ be a bounded measurable function such that $\int f d\mu = 0$.
Assume that $|\mathfrak{d} f| \le 1$.
Then, for any $t \ge 0$ we have
$$\mu \left(\lvert f \rvert \ge t \right) \le 2 e^{-t^2/4}.$$
\end{proposition}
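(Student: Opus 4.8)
The plan is to estimate the Laplace transform $H(\lambda) := \int e^{\lambda f}\,d\mu$ by Herbst's argument, using sub-additivity of entropy to reduce the problem to a one-dimensional modified logarithmic Sobolev inequality. We may and do assume $\int f\,d\mu = 0$, and for a probability measure $\nu$ and $h > 0$ we write $\operatorname{Ent}_\nu(h) := \int h\log h\,d\nu - \big(\int h\,d\nu\big)\log\int h\,d\nu$ and $\operatorname{Cov}_\nu(\cdot,\cdot)$ for the covariance under $\nu$. Since $f$ is bounded, all the quantities below are finite and the differentiations under the integral sign are justified. The first ingredient is the tensorization (sub-additivity) inequality
\[
\operatorname{Ent}_\mu(e^{\lambda f}) \le \sum_{i=1}^n \int \operatorname{Ent}_{\mu_i}(e^{\lambda f})\,d\mu ,
\]
where $\operatorname{Ent}_{\mu_i}$ denotes the entropy in the $i$-th variable with the remaining ones frozen.

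The heart of the argument is the following one-dimensional bound: for any probability measure $\nu$ and any bounded measurable $g$,
\[
\operatorname{Ent}_\nu(e^g) \le \tfrac12 \int\!\!\int (g(x)-g(y))^2 e^{g(x)}\,d\nu(x)\,d\nu(y).
\]
To prove it I would first note that $\operatorname{Ent}_\nu(e^g) \le \operatorname{Cov}_\nu(e^g, g)$: the difference of the two sides equals $\big(\int e^g\,d\nu\big)\big(\log\int e^g\,d\nu - \int g\,d\nu\big)$, which is nonnegative by Jensen's inequality. Then, for $X, Y$ i.i.d.\ with law $\nu$, $\operatorname{Cov}_\nu(e^g,g) = \tfrac12 \mathbb{E}\big[(e^{g(X)}-e^{g(Y)})(g(X)-g(Y))\big]$, and by convexity of $t\mapsto e^t$ the trapezoidal estimate $e^{g(x)}-e^{g(y)} = \int_{g(y)}^{g(x)} e^t\,dt \le (g(x)-g(y))\tfrac{e^{g(x)}+e^{g(y)}}{2}$ holds whenever $g(x)\ge g(y)$; swapping $x$ and $y$ covers the reverse case, so that $(e^{g(x)}-e^{g(y)})(g(x)-g(y)) \le (g(x)-g(y))^2\tfrac{e^{g(x)}+e^{g(y)}}{2}$ pointwise. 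Averaging and using symmetry in $(x,y)$ yields the claimed inequality.

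Applying this conditionally with $\nu = \mu_i$ and $g = \lambda f$ (viewed as a function of the $i$-th coordinate), and recalling that $\int (f(x) - f(x_1,\ldots,y_i,\ldots,x_n))^2\,\mu_i(dy_i) = 2\,(\mathfrak{d}_i f(x))^2$ by the definition \eqref{DiskrDiff2}, the tensorization inequality gives
\[
\operatorname{Ent}_\mu(e^{\lambda f}) \le \lambda^2 \int |\mathfrak{d} f|^2 e^{\lambda f}\,d\mu \le \lambda^2 H(\lambda),
\]
the last step by the hypothesis $|\mathfrak{d} f| \le 1$. Setting $K(\lambda) := \tfrac1\lambda \log H(\lambda)$ for $\lambda > 0$, one computes $K'(\lambda) = \operatorname{Ent}_\mu(e^{\lambda f})/(\lambda^2 H(\lambda)) \le 1$, while $K(0^+) = H'(0)/H(0) = \int f\,d\mu = 0$ (using $H(0)=1$); integrating gives $\log H(\lambda) \le \lambda^2$, that is, $\int e^{\lambda f}\,d\mu \le e^{\lambda^2}$ for all $\lambda \ge 0$. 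A Chernoff bound, optimized at $\lambda = t/2$, then gives $\mu(f \ge t) \le e^{-t^2/4}$; applying the same estimate to $-f$ (which again satisfies $|\mathfrak{d}(-f)| \le 1$) and adding the two bounds produces $2e^{-t^2/4}$.

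The only genuinely delicate point I expect is the one-dimensional inequality with the sharp constant $\tfrac12$, since this is precisely what produces the exponent $t^2/4$ rather than a weaker one; the tensorization and Herbst steps are routine. Alternatively, as the statement is only a mild variant of \cite[Proposition 2.1]{B-G2}, one could quote that result directly and simply verify that boundedness of $f$ is enough to run its proof.
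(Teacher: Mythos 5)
Your proof is correct and is essentially the argument the paper intends: it does not prove Proposition \ref{1.Ordn} in-house but defers to \cite[Proposition 2.1]{B-G2}, whose proof is exactly your route, and your one-dimensional entropy bound is verbatim the paper's own Proposition \ref{DLSUfueralle} (modified LSI with $\sigma^2=2$), combined with tensorization (Lemma \ref{DLSU-Prod}) and the Herbst argument. The constants check out: $K'(\lambda)\le 1$ gives $\log\int e^{\lambda f}d\mu\le\lambda^2$ and hence the exponent $t^2/4$, which is sharper than what one would get by instead invoking the paper's inequality \eqref{A'}.
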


Note that the boundedness of $f$ is in fact a consequence of the condition $|\mathfrak{d} f| \le 1$ (see Section 2).
If we apply Proposition \ref{1.Ordn} to $1$-Lipschitz functions with respect to the Hamming distance, we recover the classical
concentration inequalities by M. Talagrand (cf. \cite{L1}). Similar results can be derived in the context of ``penalties'', which can be
regarded as generalizations of the Hamming distance \cite{L1}. In \cite{B-G2}, a generalized version of Proposition \ref{1.Ordn} is used for deriving
concentration inequalities for randomized sums.

The tail bounds we deduce in the present article are motivated as follows: consider a suitably normalized non-linear statistic which is stochastically non-degenerate and bounded in the limit. By general principles, it will have the same limit distribution as a stochastically bounded Gaussian chaos functional. If it is non-degenerate of order $2$ in the limit, it certainly has exponential tail decay.

As an example, consider a set of i.i.d. centered random variables $X_1, \ldots, X_n$ in $L^\infty$ (e.\,g. Rademacher variables). Then, a particularly simple case where Proposition \ref{1.Ordn} applies is the function $g(X) := \frac{1}{\sqrt{n}} \sum_{i=1}^n X_i$. A natural second order analogue of $g$ is the function $f(X) := \frac{1}{n} \sum_{i < j}X_iX_j$. However, in this case, evaluating the condition $|\mathfrak{d} f| \le 1$ and hence applying Proposition \ref{1.Ordn} does not lead to correct results. Indeed, $f$ is not a function of a Lipschitz class bounded in $n$.
This motivates the use of second order differences instead. A further aspect can be observed if we do not assume the $X_i$ to be centered. In this case, we shall replace $f(X)$ by $Rf(X) := \frac{1}{n} \sum_{i < j}(X_i - \mathbb{E}X_i)(X_j - \mathbb{E}X_j)$. Comparing $Rf$ to $f$, we see that we have not only removed the expected value of $f$ but also a sort of ``linear term''.

Indeed, the notion of \emph{second} order concentration has two aspects which generalize these observations. First, it refers to the use of difference operators of second order. Second, it means that instead of
fluctuations of $f - \mathbb{E}f$ we will study
fluctuations of $f - \mathbb{E}f - f_1$, where $f_1$ is the first order term in the Hoeff\-ding decomposition of $f$. Let us briefly
recall the notion of Hoeffding decomposition, which was introduced in \cite{H}. Given a product probability
space $(\Omega, \mathcal{A}, \mu) := \otimes_{i=1}^n (\Omega_i, \mathcal{A}_i, \mu_i)$ and some function $f \in L^1(\mu)$, the
Hoeffding decomposition is the unique decomposition
\begin{align}
\label{Hoeffding}
f(x_1, \ldots, x_n) = \int f d\mu + \sum_{i=1}^{n} h_i(x_i) + \sum_{i<j} h_{ij}(x_i, x_j) + \ldots
= f_0 + f_1 + f_2 + \ldots + f_n
\end{align}
such that $\int h_{i_1 \ldots i_k}(x_{i_1}, \ldots, x_{i_k}) \mu_{i_j} (d x_{i_j}) = 0$ for all $k = 1, \ldots, n$, $1 \le i_1 < \ldots <
i_k \le n$ and $j \in \{1, \ldots, k\}$.
The sum $f_d$ is called the Hoeffding term of degree $d$ or simply $d$-th Hoeffding term of $f$. Note that for $f \in L^2(\mu)$
the $f_j, j \in {\mathbb N}_0$, form an orthogonal decomposition of $f$ in $L^2(\mu)$.

We now formulate our main results. For that, we need to introduce a notion of second order differences based on $\mathfrak{d}$. Indeed, for any function $f \colon \Omega \to \mathbb{R}$ in $L^2(\mu)$ and any $i \ne j$, set
\begin{align}
\mathfrak{d}_{ij} f(x) := &\Big(\frac{1}{4} \int_{\Omega_i}\int_{\Omega_j}(f(x)-f(x_1, \ldots, x_{i-1}, y_i, x_{i+1}, \ldots, x_n)\label{nablaiteriert}\\ &- f(x_1, \ldots, y_j, \ldots, x_n) + f(x_1, \ldots, y_i, \ldots, y_j, \ldots, x_n))^2 \mu_i(dy_i)\mu_j(dy_j)\Big)^{1/2}.\notag
\end{align}
In particular, we consider the following modified ``Hessian'' with respect to $\mathfrak{d}$:
\begin{align}
\label{Hessenabla}
(\mathfrak{d}^{(2)} f(X))_{ij} := \begin{cases} \mathfrak{d}_{ij}f(X), & i \neq j, \\ 0, & i = j. \end{cases}
\end{align}
For $x\in \mathbb R^n$ let $|x|$ denote its Euclidean norm, and for an $n \times n$ matrix $A = (a_{ij})_{ij}$ let
$\lVert A\rVert_{\text{HS}}$
denote its Hilbert--Schmidt norm given by $\lVert A \rVert_{\text{HS}} = (\sum_{i,j=1}^n |a_{ij}|^2)^{1/2}$.

\begin{theorem}
\label{zentral}
Let $(\Omega_i, \mathcal{A}_i, \mu_i)$ be probability spaces, and denote by $(\Omega, \mathcal{A}, \mu) := \otimes_{i=1}^n
(\Omega_i, \linebreak[2]\mathcal{A}_i, \mu_i)$ their product.
Moreover, let $f \colon \Omega \to \mathbb{R}$ be a bounded measurable function so that its Hoeffding decomposition
with respect to $\mu$ is given by
$f = \sum_{k=2}^{n} f_k.$
Assume that the conditions
\begin{equation}\label{bedingungen}
|\mathfrak{d}|\mathfrak{d} f|| \le 1\qquad \text{and}\qquad
\int \lVert \mathfrak{d}^{(2)} f \rVert_{\text{\emph{HS}}}^2 d\mu \le b^2
\end{equation}
are satisfied for some $b \ge 0$, where $\lVert \mathfrak{d}^{(2)} f \rVert_{\text{\emph{HS}}}$ denotes the Hilbert--Schmidt norm of $\mathfrak{d}^{(2)} f$.
Then, we have
$$\int \exp \left(\frac{1}{2(3 + b^2)} |f|\right) d\mu \le 2.$$
\end{theorem}

Note that by Chebychev's inequality, Theorem \ref{zentral} implies
$\mu(|f| \ge t) \le 2 e^{-ct}$
for all $t > 0$ and some constant $c = c(b^2)$ (in fact, $c = (2(3+b^2))^{-1}$). In other words, Theorem \ref{zentral} yields subexponential tails with an optimal exponent for large $t$ in accordance with the discussion above.

In Theorem \ref{zentral}, we have one condition assuming pointwise boundedness of second order-type differences and a second condition assuming boundedness in mean of the squared Hilbert--Schmidt norm of a suitable ``Hessian''. This mirrors the structure of Theorem 1.1 in S.\,G. Bobkov, G.\,P. Chistyakov and F. G\"{o}tze \cite{B-C-G}.
Some ways of explicitly evaluating the pointwise condition $|\mathfrak{d}|\mathfrak{d} f|| \le 1$ are given in Section 6. In particular, if for large $n$, the much stronger Hilbert--Schmidt norm is used, this will bound the constant $b$ in the second condition in \eqref{bedingungen}. In other situations (especially in differentiable settings, cf. Section 1.1), $|\mathfrak{d}|\mathfrak{d} f||$ may be bounded by operator-type norms of second order differences or derivatives.

Note that in general, the two conditions are incomparable: though the pointwise condition will often dominate, sometimes it may happen that the second one is more restrictive. An elementary example is given by the function $f(X) := \frac{1}{\sqrt{n}} (X_1X_2 + X_3X_4 + \ldots)$ for a set of independent Rademacher variables and $n$ even (here, we obviously have $|\mathfrak{d}|\mathfrak{d} f|| = 0$, and thus, the pointwise condition does not suffice in order to control the variance in the exponential estimates).


For applications, we formulate a convenient ``hybrid'' bound extending the results from Theorem \ref{zentral} to functions with non-vanishing first order Hoeffding term (say, $f_1$). To this end, we need to provide that $f_1$ is of sufficiently small stochastic size. That is, in Theorem \ref{zentral}, let $f \colon \Omega \to
\mathbb{R}$ be a function in $L^1(\mu)$ with Hoeffding decomposition $f = \sum_{k=0}^{n} f_k$. Then, we denote by
\begin{equation}
\label{Projektion}
Rf := f - f_0 - f_1 = \sum_{k=2}^{n} f_k
\end{equation}
the projection of $f$ onto the space of the functions $f \in L^1(\mu)$ whose Hoeffding terms of orders 0 and 1 vanish. For convenience
we shall assume that the expected value $f_0$ of $f$ vanishes. In order to obtain a result similar to Theorem \ref{zentral},
we add conditions ensuring $f_1 = \mathcal{O}_P(1)$ (cf. Proposition \ref{1.Ordn}). The result is the following theorem:

\begin{theorem}
	\label{mit Hoeffding1}
	Let $(\Omega_i, \mathcal{A}_i, \mu_i)$ be probability spaces, and denote by $(\Omega, \mathcal{A}, \mu) := \otimes_{i=1}^n
	(\Omega_i, \linebreak[2]\mathcal{A}_i, \mu_i)$ their product. Moreover, let $f \colon \Omega \to \mathbb{R}$ be a bounded measurable function
	such that its Hoeffding decomposition with respect to $\mu$ is given by
	$f = f_1 + \sum_{k=2}^{n} f_k = f_1 + Rf$.
	(In particular, we have $\mathbb{E}f = 0$.) Suppose that $|\mathfrak{d} f_1| \le b_0$ for some $b_0 \ge 0$ and that the conditions
	$$|\mathfrak{d}|\mathfrak{d} Rf|| \le 1\qquad \text{and}\qquad
	\int \lVert \mathfrak{d}^{(2)} f \rVert_{\text{\emph{HS}}}^2 d\mu \le b^2$$
	for some $b \ge 0$ are satisfied.
	Then, we have
	$$\int \exp\left(\frac{1}{12 + 4 b^2 + 7 b_0} |f|\right) d\mu \le 2.$$
\end{theorem}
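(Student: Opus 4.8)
The plan is to follow the route sketched just before the statement: decompose $f = f_1 + Rf$ with $Rf = \sum_{k\ge 2} f_k$, bound the exponential moments of $|f_1|$ and of $|Rf|$ separately, and recombine them via $|f|\le |f_1| + |Rf|$ and the Cauchy--Schwarz inequality. For the $Rf$-part I would apply Theorem \ref{zentral} to $Rf$ directly, so I first have to verify its hypotheses: the Hoeffding decomposition of $Rf$ is $\sum_{k\ge 2} f_k$ by construction, $Rf = f - f_0 - f_1$ is bounded, and $|\mathfrak{d}|\mathfrak{d} Rf|| \le 1$ is assumed. The only nontrivial point is the Hessian bound, and here the key observation is that the ``de-diagonalized'' Hessian does not see the Hoeffding terms of orders $0$ and $1$: $f_0$ is constant, and writing $f_1 = \sum_k h_k(x_k)$ with $\int h_k\, d\mu_k = 0$ one computes $\mathfrak{D}_j f_1 = h_j(x_j)$, whence $\mathfrak{D}_{ij} f_1 = 0$ for $i \neq j$ (and $\mathfrak{D}_{ij} f_0 = 0$ trivially). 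Consequently $\mathfrak{D}^{(2)} f = \mathfrak{D}^{(2)} Rf$, so $\int \lVert \mathfrak{D}^{(2)} Rf \rVert_{\mathrm{HS}}^2\, d\mu \le b^2$, and Theorem \ref{zentral} yields $\int \exp\big( \tfrac{1}{2(3+b^2)} |Rf| \big)\, d\mu \le 2$.

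For the $f_1$-part I would use that $f_1$ is bounded with $\mathbb{E} f_1 = 0$, and that $|\mathfrak{d} f_1|\le b_0$ forces $\sum_k \lVert h_k \rVert_\infty^2 \le 2 b_0^2$ (a one-line computation, using $\int h_k\, d\mu_k = 0$). Since the $h_k(x_k)$ are independent, Hoeffding's lemma then gives $\int e^{\lambda f_1}\, d\mu \le e^{\lambda^2 b_0^2}$ for all $\lambda \in \mathbb{R}$ (this is also how Proposition \ref{1.Ordn} is proved, applied here to $f_1/b_0$), hence $\int e^{\lambda |f_1|}\, d\mu \le 2 e^{\lambda^2 b_0^2}$.

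To recombine, set $c = (12 + 4 b^2 + 7 b_0)^{-1}$ and $\beta = (2(3+b^2))^{-1}$, and note $2c \le \beta$ since $12 + 4b^2 + 7 b_0 \ge 12 + 4 b^2 = 2 \beta^{-1}$. Cauchy--Schwarz together with the two bounds above gives
$$\int e^{c|f|}\, d\mu \le \Big( \int e^{2c|f_1|}\, d\mu \Big)^{1/2} \Big( \int e^{2c|Rf|}\, d\mu \Big)^{1/2} \le 2^{1/2} e^{2 c^2 b_0^2} \cdot 2^{c/\beta},$$
where for the $Rf$-factor I used Jensen's inequality (concavity of $t \mapsto t^{2c/\beta}$, legitimate because $2c/\beta \le 1$) and Theorem \ref{zentral}: $\int e^{2c|Rf|}\, d\mu \le \big( \int e^{\beta |Rf|}\, d\mu \big)^{2c/\beta} \le 2^{2c/\beta}$. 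It then remains to check $2^{1/2} e^{2 c^2 b_0^2} 2^{c/\beta} \le 2$, i.e.\ $2 c^2 b_0^2 \log_2 e \le \tfrac12 - \tfrac{c}{\beta}$; since $\tfrac12 - \tfrac{c}{\beta} = \tfrac{7 b_0}{2(12 + 4 b^2 + 7 b_0)}$ and, by $12 + 4 b^2 + 7 b_0 \ge 7 b_0$, also $2 c^2 b_0^2 \le \tfrac{2 b_0}{7(12 + 4 b^2 + 7 b_0)}$, this reduces to $\tfrac72 \ge \tfrac27 \log_2 e$, which holds comfortably.

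The proof has no genuinely hard step, so the ``obstacle'' is of a bookkeeping kind: one must spot the identity $\mathfrak{D}^{(2)} f = \mathfrak{D}^{(2)} Rf$ (without it the Hessian hypothesis cannot be transferred to $Rf$ and Theorem \ref{zentral} is inapplicable), and one must arrange the constants so that the final product comes out $\le 2$ rather than $2^{1+\varepsilon}$. The shape of the constant is then essentially forced: the term $12 + 4 b^2 = 2 \beta^{-1}$ is what preserves $2c \le \beta$ (needed because Cauchy--Schwarz doubles the exponent in the $Rf$-factor), while the extra $7 b_0$ supplies exactly the slack needed to absorb the sub-Gaussian correction $e^{2 c^2 b_0^2}$ coming from $f_1$; as the authors remark, these coefficients are chosen generously rather than optimally.
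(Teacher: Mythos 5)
Your proposal is correct and follows essentially the same route as the paper: the same decomposition $f = f_1 + Rf$, the same key observation that $\mathfrak{D}^{(2)} f = \mathfrak{D}^{(2)} Rf$ (so that Theorem \ref{zentral} applies to $Rf$ with the given $b^2$), and the same Cauchy--Schwarz recombination with the $7b_0$ term supplying the slack. The only (harmless) deviation is in the linear part: you bound $\int e^{\lambda f_1}\,d\mu$ via Hoeffding's lemma applied to the independent summands $h_k(X_k)$, whereas the paper invokes the log-Sobolev-based inequality \eqref{A'} with $\sigma^2 = 2$ and gets $e^{2\lambda^2 b_0^2}$ in place of your $e^{\lambda^2 b_0^2}$; either estimate yields the stated constant.
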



\subsubsection{Discussion of Related Inequalities}

Hoeffding decompositions have been studied in particular in the context of $U$-statistics, that is, statistics of the form
$
U_n(h) = \frac{(n-m)!}{n!} \sum_{i_1 \ne \ldots \ne i_m} h(X_{i_1}, \ldots, X_{i_m})
$
for a sequence of i.i.d. random variables $(X_i)_{i \in \mathbb{N}}$, a measurable kernel function $h$ on $\mathbb{R}^m$ and
natural numbers $n, m$ such that $n \ge m$. A $U$-statistic is called completely degenerate (or canonical) if its Hoeffding
decomposition consists of a single term only.
There are a lot of results on the distributional properties of $U$-statistics. A partial overview is given in the monograph by V. de la Pe\~{n}a and E. Gin\'{e} \cite{D-G}. In particular, there are many inequalities describing their tail behavior starting
with Hoeffding's inequalities. That is, for $U$-statistics like $U_n(h)$ introduced above, we have
$P (U_n(h) > t) \le \exp (- [n/m]t^2/(2 M^2))$
if the function $h \colon \mathbb{R}^m \to \mathbb{R}$ is bounded by some universal constant $M$ and satisfies $\mathbb{E}
h(X_1, \ldots, X_m) = 0$. Further exponential inequalities for completely degenerate $U$-statistics have been proved by M. Arconas and E. Gin\'{e} \cite{A-G} as well as P. Major \cite{M}. These inequalities typically depend on the order $m$,
the second moment $\sigma^2$ and some bound $M$ of the kernel $h$ only.

Finally, let us mention that in a subsequent paper together with S.\,G. Bobkov  \cite{B-G-S}, we have extended some of the results of the present paper to arbitrary higher orders. However, the methodology is quite different. For instance, in the present paper our arguments are mainly based on modified logarithmic Sobolev inequalities and exponential inequalities which follow from them. By contrast, the main tool in \cite{B-G-S} is a recursion inequality for the $L^p$-norms of $f$ and its higher order differences (or derivatives) for any $p \ge 2$, which in turn does not appear in the present paper.

\subsection{Differentiable Functions}

In arbitrary product spaces, the usual notion of differentiation is not available, which is why we need to work with difference operators as a kind of substitute.
However, if we do consider differentiable settings, it seems natural to use the ordinary gradient $\nabla$ instead. Therefore, we now complement our main theorems by results valid in differentiable settings.

Indeed, it is possible to formulate a result similar to Theorem \ref{zentral} for probability measures on $\mathbb{R}^n$ which satisfy a logarithmic Sobolev inequality. Note that this situation has already been sketched in \cite{B-C-G} (see Remark 5.3 there). In the present paper, we work out these ideas in detail and add some further material, including a version with an additional ``linear'' term similar to Theorem \ref{mit Hoeffding1} and some applications. Let us first recall some basic notions.

Let $G \subset \mathbb{R}^n$ be some open set, and let $\mu$ be a probability measure on $(G, \mathcal{B}(G))$. Then, $\mu$ satisfies
a \emph{Poincar\'{e} inequality} with constant $\sigma^2 > 0$ if for all locally Lipschitz functions $f \colon G \to \mathbb{R}$
\begin{equation}
\label{PI}
\text{Var}_\mu (f) \le \sigma^2 \int_G |\nabla f|^2 d\mu,
\end{equation}
where $\text{Var}_\mu (f) = \int f^2 d\mu - (\int f d\mu)^2$ and $|\nabla f|$ denotes the Euclidean norm of the usual gradient.
Another type of functional inequality for probability measures $\mu$ on $(G, \mathcal{B}(G))$ is given by the \emph{logarithmic
Sobolev inequality}.
That is, $\mu$ satisfies a logarithmic Sobolev inequality with (Sobolev) constant $\sigma^2 > 0$ if for all locally Lipschitz functions
$f \colon G \to \mathbb{R}$
\begin{equation}
\label{LSI}
\text{Ent}_\mu (f^2) \le 2 \sigma^2 \int_G |\nabla f|^2 d\mu,
\end{equation}
where $\text{Ent}_\mu (f^2) = \int f^2 \log f^2 d\mu - \int f^2 d\mu \log\int f^2 d\mu$ (see Section 3). Logarithmic Sobolev
inequalities are stronger than Poincar\'{e} inequalities. For instance, if $\mu$ satisfies a logarithmic Sobolev inequality with
constant $\sigma^2$, it also satisfies a Poincar\'{e} inequality with the same constant $\sigma^2$.

We now have the following result:

\begin{theorem}
\label{kontinuierlich}
Let $G \subset \mathbb{R}^n$ be some open set, and let $\mu$ be a probability measure on $(G, \mathcal{B}(G))$ which satisfies a
logarithmic Sobolev inequality with constant $\sigma^2 > 0$. Let $f \colon G \to \mathbb{R}$ be a $\mathcal{C}^2$-smooth function
such that $f \in L^1(\mu)$ and $\partial_i f \in L^1(\mu)$ for all $ i = 1, \ldots, n$, where $\partial_i f$ denotes the $i$-th
partial derivative of $f$. Assume that
$\int_G f d\mu = 0$ and $\int_G \partial_i f d\mu = 0$ for all $i = 1, \ldots, n$.
Moreover, assume that
$$\lVert f''(x) \rVert_\text{\emph{Op}} \le 1\enskip \text{for all $x \in G$}\qquad \text{and}\qquad
\int_G \lVert f'' \rVert_\text{\emph{HS}}^2 d\mu \le b^2$$
for some $b \ge 0$, where $f''$ denotes the Hessian of $f$ and $\lVert f'' \rVert_\text{\emph{Op}}$, $\lVert f'' \rVert_\text{\emph{HS}}$ denote its operator and Hilbert--Schmidt norms, respectively.
Then, the following inequality holds:
$$\int_G \exp \left(\frac{1}{2 \sigma^2(1 + b^2)} |f|\right) d\mu \le 2.$$
\end{theorem}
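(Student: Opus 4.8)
The plan is to run a Herbst-type argument directly for the function $|f|$, driven by the logarithmic Sobolev inequality \eqref{LSI}, with the weighted gradient integral produced on the right-hand side controlled by means of the two structural hypotheses on $f$ rather than estimated trivially. I would first record three preliminary facts. \textbf{(i)} Since $\lVert f''(x)\rVert_{\mathrm{Op}}\le 1$ everywhere, the function $g:=|\nabla f|$ is Lipschitz with $|\nabla g|\le 1$ almost everywhere (where $\nabla f\neq 0$ one has $\nabla g=f''\nabla f/|\nabla f|$, so $|\nabla g|\le\lVert f''\rVert_{\mathrm{Op}}\le 1$), so by the classical Herbst argument under \eqref{LSI} — the continuous analogue of Proposition \ref{1.Ordn}, obtained by applying \eqref{LSI} to $e^{\lambda g/2}$ — we get $\int_G e^{\lambda(g-\int g\,d\mu)}\,d\mu\le e^{\sigma^2\lambda^2/2}$ for all $\lambda$. \textbf{(ii)} Since \eqref{LSI} entails the Poincar\'e inequality \eqref{PI} with the same constant $\sigma^2$, applying \eqref{PI} to each $\partial_i f$ (which has $\mu$-mean $0$ by hypothesis) and summing over $i$ gives $\int_G|\nabla f|^2\,d\mu\le\sigma^2\int_G\lVert f''\rVert_{\mathrm{HS}}^2\,d\mu\le\sigma^2 b^2$; in particular $\mathrm{Var}_\mu(f)\le\sigma^2\int_G|\nabla f|^2\,d\mu\le\sigma^4 b^2$, so (as $\int f\,d\mu=0$) $\int_G|f|\,d\mu\le\mathrm{Var}_\mu(f)^{1/2}\le\sigma^2 b$. \textbf{(iii)} Combining \textbf{(i)} with the bound $\int g\,d\mu\le\sigma b$ from \textbf{(ii)}, the function $|\nabla f|^2=g^2$ is sub-exponential on the relevant scale: for $s$ in a range of order $\sigma^{-2}$ one has an estimate of the type $\int_G e^{s(|\nabla f|^2-\int|\nabla f|^2 d\mu)}\,d\mu\le\exp\!\big(Cs^2\sigma^4(1+b^2)\big)$ with a numerical constant $C$.

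For $t\ge 0$ set $\Phi(t):=\int_G e^{t|f|}\,d\mu$. Since $f$ is $\mathcal{C}^2$, $|f|$ is locally Lipschitz and $|\nabla e^{t|f|/2}|^2=\tfrac{t^2}{4}|\nabla f|^2e^{t|f|}$ a.e., so \eqref{LSI} applied to $v=e^{t|f|/2}$ gives
$$t\Phi'(t)-\Phi(t)\log\Phi(t)\;=\;\mathrm{Ent}_\mu\big(e^{t|f|}\big)\;\le\;\frac{\sigma^2 t^2}{2}\int_G|\nabla f|^2e^{t|f|}\,d\mu,$$
equivalently $\big(t^{-1}\log\Phi(t)\big)'\le\mathrm{Ent}_\mu(e^{t|f|})/(t^2\Phi(t))$. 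The crucial — and, I expect, hardest — step is to bound $\int_G|\nabla f|^2e^{t|f|}\,d\mu$ sharply. I would split $|\nabla f|^2=\big(|\nabla f|^2-\int|\nabla f|^2d\mu\big)+\int|\nabla f|^2d\mu$; the constant part contributes at most $\sigma^2 b^2\,\Phi(t)$ by \textbf{(ii)}, while for the centered part I would use the entropy (Gibbs) variational inequality $\int\varphi u\,d\mu\le\mathrm{Ent}_\mu(u)+\log\int e^{\varphi}\,d\mu$ with $u=e^{t|f|}/\Phi(t)$ and $\varphi=s\big(|\nabla f|^2-\int|\nabla f|^2d\mu\big)$, estimating $\log\int e^{\varphi}\,d\mu$ by \textbf{(iii)}. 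This yields
$$\int_G|\nabla f|^2e^{t|f|}\,d\mu\;\le\;\sigma^2 b^2\,\Phi(t)+\frac{1}{s}\,\mathrm{Ent}_\mu\big(e^{t|f|}\big)+Cs\,\sigma^4(1+b^2)\,\Phi(t),$$
and choosing $s$ of order $\sigma^2 t^2$ lets the entropy term be absorbed on the left, leaving $\mathrm{Ent}_\mu(e^{t|f|})\le\big(\sigma^4 b^2+\mathcal{O}(\sigma^8 t^2(1+b^2))\big)t^2\,\Phi(t)$ on a range $0\le t<c\sigma^{-2}$.

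Finally I would integrate the differential inequality $\big(t^{-1}\log\Phi(t)\big)'\le\sigma^4 b^2+\mathcal{O}(\sigma^8 t^2(1+b^2))$ from $0$, using $\lim_{t\to 0}t^{-1}\log\Phi(t)=\Phi'(0)=\int_G|f|\,d\mu\le\sigma^2 b$ from \textbf{(ii)}, to obtain $\log\Phi(t)\le\sigma^2 b\,t+\sigma^4 b^2 t^2+(\text{lower order})$ for $0\le t<c\sigma^{-2}$. Evaluating at $t_0:=\tfrac{1}{2\sigma^2(1+b^2)}$, which lies in the admissible range for every $b\ge 0$, the first two terms are bounded by $\tfrac{b}{2(1+b^2)}+\tfrac{b^2}{4(1+b^2)^2}\le\tfrac14+\tfrac1{16}$ and the remaining terms are of smaller order, so $\Phi(t_0)=\int_G\exp\!\big(\tfrac{1}{2\sigma^2(1+b^2)}|f|\big)\,d\mu\le 2$, as claimed, with some room to spare. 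The delicate point throughout is keeping the numerical constants in the Gibbs-variational and sub-exponential estimates of step \textbf{(iii)} small enough that this last evaluation indeed stays below $2$; the overall scheme, however, is just the Herbst argument adapted so that the second-order data $\int\lVert f''\rVert_{\mathrm{HS}}^2\,d\mu\le b^2$ and $\lVert f''\rVert_{\mathrm{Op}}\le 1$ enter through the smallness and Lipschitz regularity of $|\nabla f|$.
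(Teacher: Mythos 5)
Your proposal is correct in outline and rests on the same two structural pillars as the paper's proof: the observation that $\lVert f''\rVert_{\mathrm{Op}}\le 1$ makes $|\nabla f|$ a $1$-Lipschitz function (Lemma \ref{itGradHess} in the paper), and the application of the Poincar\'e inequality to each centered derivative $\partial_i f$, summed over $i$, to get $\int_G|\nabla f|^2\,d\mu\le\sigma^2\int_G\lVert f''\rVert_{\mathrm{HS}}^2\,d\mu\le\sigma^2 b^2$ (the paper's \eqref{nablaHesse}); your step \textbf{(ii)} is that argument verbatim. Where you diverge is in the assembly of the exponential-moment machinery. The paper applies the packaged Proposition \ref{ExponentielleUngl2} --- i.e.\ the Bobkov--G\"otze inequality \eqref{A} combined with the Aida--Masuda--Shigekawa inequality \eqref{B} --- to $f$, obtains $\int_G e^{f/(2\sigma^2)}d\mu\le e^{b^2/2}$, symmetrizes in $f\mapsto -f$, and then renormalizes by H\"older's inequality to push the constant from $1/(2\sigma^2)$ down to $1/(2\sigma^2(1+b^2))$. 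You instead run a Herbst differential inequality directly for $|f|$, decouple $\int|\nabla f|^2e^{t|f|}\,d\mu$ via the Gibbs variational formula, absorb the entropy term, and integrate the resulting ODE, using $\int|f|\,d\mu\le\sigma^2 b$ as the initial condition. This avoids the symmetrization and H\"older steps but forces you to carry explicit numerical constants through three separate estimates; both routes land comfortably below $\log 2$ at $t_0=1/(2\sigma^2(1+b^2))$. In effect you are re-deriving inline the tools the paper outsources to \eqref{A} and \eqref{B}, which is why the paper's version is shorter.

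One step you should tighten is \textbf{(iii)}. You assert that the sub-exponential bound $\log\int e^{s(|\nabla f|^2-\int|\nabla f|^2d\mu)}\,d\mu\le Cs^2\sigma^4(1+b^2)$ follows from ``combining \textbf{(i)} with $\int|\nabla f|\,d\mu\le\sigma b$''. Sub-Gaussianity of $g=|\nabla f|$ around its mean plus a bound on that mean does not by itself yield a bound quadratic in $s$ with variance proxy $\int g^2d\mu$: writing $g^2-\int g^2d\mu=(g-\mathbb{E}g)^2-\mathrm{Var}(g)+2\mathbb{E}g\,(g-\mathbb{E}g)$, the term $\log\mathbb{E}\,e^{s(g-\mathbb{E}g)^2}$ contributes an amount of order $s\sigma^2$ which need not cancel against $-s\,\mathrm{Var}(g)$ when $\mathrm{Var}(g)\ll\sigma^2$. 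The clean fix is to invoke \eqref{B} directly with $u=g$ (your step \textbf{(i)} supplies exactly its hypothesis $|\nabla^* u|\le 1$): this gives $\log\int e^{sg^2}d\mu\le\frac{s}{1-2\sigma^2 s}\int g^2d\mu$, hence $\log\int e^{s(g^2-\int g^2d\mu)}d\mu\le\frac{2\sigma^2 s^2}{1-2\sigma^2 s}\int g^2d\mu\le 4\sigma^4 s^2 b^2$ for $s\le 1/(4\sigma^2)$ --- sharper than what you claimed --- after which your choice $s=\sigma^2t^2$ and the final evaluation at $t_0$ close the argument with room to spare.
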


Note that unlike in Theorem \ref{zentral}, we do not need to require $\mu$ to be a product measure. Given any function $f \in
\mathcal{C}^2(G)$ such that $f \in L^1(\mu)$ and $\partial_i f \in L^1(\mu)$ for all $ i = 1, \ldots, n$, we may modify
$f$ to remove a ``linear'' term by considering
\begin{equation}
\label{corrected}
Rf(x) = f(x) - \mu[f] - \sum_{i=1}^{n} \mu[\partial_i f] (x_i - \mu[x_i]),
\end{equation}
where $\mu[h] = \int_G h d\mu$ for any function $h \in L^1(\mu)$. $Rf$ represents a centered function with centered derivatives.

Similarly to Theorem \ref{mit Hoeffding1}, we may allow non-vanishing integrals $\mu[\partial_i f]$ in Theorem \ref{kontinuierlich}
if they are of sufficiently small size. In detail:

\begin{theorem}
	\label{kontinuierlich+1.Ordn}
	Let $G \subset \mathbb{R}^n$ be some open set, and let $\mu$ be a probability measure on $(G, \mathcal{B}(G))$ which satisfies a
	logarithmic Sobolev inequality with constant $\sigma^2 > 0$. Let $f \colon G \to \mathbb{R}$ be a $\mathcal{C}^2$-smooth function
	such that $f \in L^1(\mu)$ and $\partial_i f \in L^1(\mu)$ for all $ i = 1, \ldots, n$, where $\partial_i f$ denotes the $i$-th
	partial derivative of $f$. Assume that
	$\int_G f d\mu = 0$ and $\sum_{i=1}^{n}(\int_G \partial_i f d\mu)^2 \le \sigma^2b_0^2$
	for some $b_0 \ge 0$. Moreover, assume that
	$$\lVert f''(x) \rVert_\text{\emph{Op}} \le 1\enskip \text{for all $x \in G$}\qquad \text{and}\qquad
	\int_G \lVert f''(x) \rVert_\text{\emph{HS}}^2 d\mu \le b^2$$
	for some $b \ge 0$, where $f''$ denotes the Hessian of $f$ and $\lVert f'' \rVert_\text{\emph{Op}}$, $\lVert f'' \rVert_\text{\emph{HS}}$ denote its operator and Hilbert--Schmidt norms, respectively.
	Then, we have
	$$\int_G \exp \left(\frac{1}{\sigma^2(4 + 4 b^2 + 5 b_0)} |f|\right) d\mu \le 2.$$
\end{theorem}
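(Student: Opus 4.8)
The plan is to mimic the passage from Theorem \ref{zentral} to Theorem \ref{mit Hoeffding1}: peel off the affine part of $f$, apply Theorem \ref{kontinuierlich} to the remainder, control the affine part by Herbst's argument, and glue the two exponential estimates together by Hölder's inequality.

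First I would introduce the affine function
$$\ell(x) := \sum_{i=1}^{n} \mu(\partial_i f)\,(x_i - \mu(x_i)), \qquad \tilde f := f - \ell,$$
where $\mu(x_i)$ is finite because a measure satisfying a logarithmic Sobolev inequality has exponential moments of all linear functions, by the very Herbst argument used below. Since $\mu(f) = 0$ we have $\mu(\tilde f) = 0$; since $\ell$ is affine, $\partial_i \tilde f = \partial_i f - \mu(\partial_i f)$ has vanishing $\mu$-integral, and the Hessian is unchanged, $\tilde f'' = f''$. Hence $\lVert \tilde f''(x) \rVert_\text{Op} \le 1$ for all $x \in G$ and $\int_G \lVert \tilde f'' \rVert_\text{HS}^2 \, d\mu \le b^2$, so $\tilde f$ satisfies all hypotheses of Theorem \ref{kontinuierlich}, and therefore
$$\int_G \exp\Big(\tfrac{1}{2\sigma^2(1+b^2)}\,|\tilde f|\Big)\,d\mu \le 2.$$

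Next I would treat $\ell$. By hypothesis $|\nabla \ell|^2 = \sum_{i=1}^{n} \mu(\partial_i f)^2 \le \sigma^2 b_0^2$, so $\ell$ is $\sigma b_0$-Lipschitz with $\mu(\ell) = 0$. Feeding $e^{\lambda \ell/2}$ into the logarithmic Sobolev inequality \eqref{LSI} and running the standard Herbst argument — i.e. turning $\mathrm{Ent}_\mu(e^{\lambda \ell}) \le \tfrac12 \sigma^4 b_0^2 \lambda^2 \int e^{\lambda \ell} d\mu$ into a differential inequality for $\lambda \mapsto \tfrac1\lambda \log \int e^{\lambda \ell} d\mu$ — yields $\int_G e^{\lambda \ell} d\mu \le \exp(\tfrac12 \sigma^4 b_0^2 \lambda^2)$ for every $\lambda \in \mathbb{R}$, hence, applying this to $\pm \ell$, $\int_G e^{\lambda|\ell|} d\mu \le 2\exp(\tfrac12 \sigma^4 b_0^2 \lambda^2)$. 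This is the continuous counterpart of Proposition \ref{1.Ordn}. Finally I would combine the two bounds: writing $c := 1/(\sigma^2(4 + 4b^2 + 5b_0))$ and $|f| \le |\tilde f| + |\ell|$, Hölder's inequality with conjugate exponents $p,q$ gives $\int_G e^{c|f|} d\mu \le (\int e^{cp|\tilde f|} d\mu)^{1/p} (\int e^{cq|\ell|} d\mu)^{1/q}$. I would pick $p$ slightly above $(4 + 4b^2 + 5b_0)/(2(1+b^2))$, so that $cp$ is strictly below the threshold $1/(2\sigma^2(1+b^2))$ and, by Jensen's inequality applied to the bound for $\tilde f$, $\int e^{cp|\tilde f|} d\mu \le 2^{\theta}$ for some $\theta < 1$; meanwhile $cq \le 1/(\sigma^2(2 + 2b^2 + 5b_0))$ keeps the Gaussian factor from $\ell$ small, since $\tfrac12 \sigma^4 b_0^2 (cq)^2 \le b_0^2/(2(5b_0)^2) = 1/50$. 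The numerical constant $4 + 4b^2 + 5b_0$ is chosen precisely so that the reserved slack $2^{(1-\theta)/p}$ from the $\tilde f$-factor absorbs the residual sub-Gaussian factor $e^{(cq)^2 \sigma^4 b_0^2 / (2q)}$ from the $\ell$-factor, making the product at most $2^{1/p + 1/q} = 2$.

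I expect the only genuine obstacle to be the bookkeeping in this last step: choosing the Hölder exponents and checking that the $5b_0$ in the denominator is large enough to swallow the residual Gaussian factor coming from $\ell$ together with the non-integer value of $p$. Everything else is either a direct appeal to Theorem \ref{kontinuierlich} or the textbook Herbst estimate, and the key structural point — that replacing $f$ by $\tilde f$ leaves both curvature hypotheses (the operator-norm bound and the Hilbert–Schmidt integral bound on the Hessian) literally unchanged, because $\ell$ is affine — makes the reduction essentially automatic.
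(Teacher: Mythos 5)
Your architecture is the same as the paper's: the identical decomposition $f=\varphi_1+\varphi_2$ with $\varphi_1(x)=\sum_{i}\mu(\partial_i f)(x_i-\mu(x_i))$, the same observation that the two Hessian hypotheses pass verbatim to $\varphi_2$ so that Theorem \ref{kontinuierlich} applies to it, and the same sub-Gaussian control of the Lipschitz linear part (the paper invokes inequality \eqref{A} rather than rerunning Herbst, which only costs a factor $2$ in the Gaussian exponent), followed by a H\"older-type gluing.

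The one step that fails as written is your choice of H\"older exponents. Apart from the sign slip (taking $p$ slightly \emph{above} $(4+4b^2+5b_0)/(2(1+b^2))$ puts $cp$ \emph{above} the threshold $1/(2\sigma^2(1+b^2))$, not below), the real problem is that taking $p$ near the threshold leaves no slack at all: with $\theta:=2\sigma^2(1+b^2)cp$, your product is bounded by $2^{\theta/p+1/q}\,e^{\sigma^4b_0^2c^2q/2}=2\cdot 2^{-(1-\theta)/p}\,e^{\sigma^4b_0^2c^2q/2}$, and as $p$ tends to the threshold one has $\theta\to 1$, so the reserve $2^{-(1-\theta)/p}\to 1$ while the residual Gaussian factor stays a fixed number $>1$ (for $b_0>0$); the required inequality $(1-\theta)/p\ \ge\ \sigma^4b_0^2c^2q/(2\log 2)$ is then violated. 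The scheme can be rescued by optimizing $p$ away from the threshold so as to balance the two error terms, but the paper sidesteps this entirely: it first renormalizes \emph{each} piece separately so that $\int e^{c_1|\varphi_1|}d\mu\le 2$ and $\int e^{c_2|\varphi_2|}d\mu\le 2$ (for $\varphi_1$ this is the same H\"older/Jensen renormalization as in the proof of Theorem \ref{zentral}, optimized over $\lambda$ to give $c_1/2\ge 1/(5\sigma^2b_0)$; for $\varphi_2$ it is Theorem \ref{kontinuierlich} itself, giving $c_2/2=1/(4\sigma^2(1+b^2))$), and only then applies the fixed Cauchy--Schwarz step \eqref{Argument} with exponent $2$, yielding $\int e^{\min(c_1,c_2)|f|/2}d\mu\le 2$; the stated constant follows from $\min(1/u,1/v)\ge 1/(u+v)$. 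You should adopt that two-stage normalization (or carry out the exponent optimization explicitly); everything else in your proposal matches the paper.
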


\subsubsection{Discussion of Related Inequalities}

We shall compare our results to a measure concentration result for functions on the $n$-sphere which are orthogonal to linear
functions, see S.\,G. Bobkov, G.\,P. Chistyakov and F. G\"{o}tze \cite{B-C-G}. In this context, Theorem \ref{zentral} can be
regarded as a ``discrete'' analogue of the latter result. Note that in particular, it covers the case of the discrete hypercube
$\{\pm1 \}^n$ equipped with the uniform distribution. Theorem \ref{kontinuierlich} may then be seen as an intermediate between
Theorem \ref{zentral} and the bounds in \cite{B-C-G}. Indeed, if in Theorem \ref{kontinuierlich} $\mu$ is the standard Gaussian
measure, the condition $\int \partial_i f d\mu = 0$ for all $i$ is satisfied if we require orthogonality
to all linear functions (by partial integration). The idea of sharpening concentration inequalities for Gaussian and related
measures by requiring orthogonality to linear functions also appears in \cite{CE-F-M}.

We would moreover like to mention the results by R. Adamczak and P. Wolff \cite{A-W}. They study the tail behavior 
of differentiable functions. Requiring certain Sobolev-type inequalities or subgaussian tail conditions, they derive exponential
inequalities for functions with bounded higher-order derivatives (evaluated in terms of some tensor-product matrix norms). In
comparison, our paper has a stronger emphasis on discrete models and difference operators with a focus on functions structured
by Hoeffding expansions of vanishing first order or, in differentiable cases as in Theorem \ref{kontinuierlich}, functions from
which we remove a kind of ``linear term''.

\subsection{Outline}

The main tools we use in this article will be introduced in Sections 2 and 3. This includes some basic facts about difference operators,
Hoeffding decompositions and modified logarithmic Sobolev inequalities. The proofs of our main theorems for product measures will be
given in Sections 4 and 5. Here, we will first derive exponential inequalities based on modified Sobolev inequalities. After that, second order differences will be invoked by making use of certain ``harmonic analysis'' arguments on the symmetric group established in Section 2. The proof of Theorem \ref{zentral} then follows as an easy combination of both chains
of arguments.

In Section 6, we discuss how to evaluate the second order conditions from Theorem \ref{zentral}. In particular, we give a
reformulation of Theorem \ref{zentral} which involves conditions which may be easier to apply. We also apply our results to functions
of independent Rademacher variables.

The differentiable case will be discussed in Section 7. Here we need to modify some of the arguments from the proof of
Theorems \ref{zentral} and \ref{mit Hoeffding1}. Together with a simple application of the Poincar\'{e} inequality, this will lead
us to the proof of Theorems \ref{kontinuierlich} and~\ref{kontinuierlich+1.Ordn}.

Finally, Section 8 presents a number of examples for functions of independent random variables as well as in differentiable settings.

A prior version of these results is based on the Ph.D. thesis of the second author \cite{S}.

\textbf{Acknowledgements.}

We wish to thank Sergey Bobkov for important suggestions concerning	modified log-Sobolev inequalities which helped to improve this paper
and Holger K\"{o}sters and Arthur Sinulis for many fruitful discussions.

\section{Difference Operators}

Let $(\Omega_1, \mathcal{A}_1), \ldots, (\Omega_n, \mathcal{A}_n)$ be measurable spaces, and denote by $(\Omega, \mathcal{A})$ their product space.
Similarly to \cite{B-G1}, we study (difference) operators $\Gamma$ on the space of
the bounded measurable real-valued functions on $(\Omega, \mathcal{A})$ such that the following two conditions hold (in particular, no sort of ``Leibniz rule'' is required):

\begin{condition}
\begin{enumerate} [(i)]
\item For any bounded measurable function $f \colon \Omega \to \mathbb{R}$, $\Gamma f = (\Gamma_1 f, \ldots,\linebreak[2] \Gamma_n f)
\colon \Omega \to \mathbb{R}^n$ is a measurable function with values in $\mathbb{R}^n$. We often call $\Gamma$ a
\emph{gradient operator} or simply \emph{gradient}.
\item For all $i = 1, \ldots, n$, all $a > 0$, $b \in \mathbb{R}$ and any bounded measurable real-valued function $f$, we have
$|\Gamma_i (af + b)| = a |\Gamma_i f|$.
\end{enumerate}
\label{DiskrDiffallgem}
\end{condition}

In addition to the ``$L^2$ difference operator'' $\mathfrak{d}$ in \eqref{DiskrDiff2}, we need a
difference operator adapted to the Hoeffding decomposition. Indeed, for any function $f \colon \Omega \to \mathbb{R}$ in $L^1(\mu)$, let
\begin{equation}
\label{DiskrDiffFormel}
\mathfrak{D}_if(x) := f(x) - \int_{\Omega_i}f(x_1, \ldots, x_{i-1}, y_i, x_{i+1}, \ldots, x_n)\mu_i(d y_i)
\end{equation}
and $\mathfrak{D}f := (\mathfrak{D}_1f, \ldots, \mathfrak{D}_nf)$. Higher order differences are defined by iteration, e.\,g.
$\mathfrak{D}_{ij}f := \mathfrak{D}_i(\mathfrak{D}_jf)$ for
$1 \le i, j \le n$. As in \eqref{Hessenabla}, we then define a modified ``Hessian'' with respect to $\mathfrak{D}$ by
\begin{align}
\label{Hessediskret}
(\mathfrak{D}^{(2)}f(x))_{ij} := \begin{cases} \mathfrak{D}_{ij}f(x), & i \neq j, \\ 0, & i = j. \end{cases}
\end{align}

The difference operator $\mathfrak{D}$ is closely related to the Hoeffding decomposition \eqref{Hoeffding}. In essence, proving
\eqref{Hoeffding} is based on the identity $\mathbb{E}_i + \mathfrak{D}_i = Id$ with $\mathfrak{D}_i$ as in \eqref{DiskrDiffFormel}. We finally get
$h_{i_1 \ldots i_k}(X_{i_1}, \ldots, X_{i_k}) = (\prod_{j \notin \{i_1, \ldots i_k\}} \mathbb{E}_j \prod_{l \in \{i_1, \ldots i_k\}}
\mathfrak{D}_l) f(X_1, \ldots, X_n)$.

Let us collect some elementary facts about
the difference operators $\mathfrak{d}$ and $\mathfrak{D}$. In the following assume that $X_1, \ldots, X_n$ is a sequence of independent random variables on
some probability space $(\Omega', \mathcal{A}', P)$ with distributions $\mu_1, \ldots, \mu_n$ respectively. As we will see, introducing
random variables sometimes facilitates notation:

\begin{remark}
	\label{DiskrDiff}
	\hspace{2em}
	\begin{enumerate}
	\item If $\mu_i = \frac{1}{2} \delta_{+1} + \frac{1}{2} \delta_{-1}$ for all $i = 1, \ldots, n$, we have
	$\mathfrak{D}_if(X) = \frac{1}{2}(f(X)-f(\sigma_iX))$,
	where $X = (X_1, \ldots, X_n)$ and $\sigma_iX := (X_1, \ldots, -X_i, \ldots, X_n)$. Moreover, note that $\mathfrak{d}_i f =
	|\mathfrak{D}_if|$.
	
		\item For any function $f(X) \in L^1(P)$, we have
		$\mathfrak{D}_if(X) = f(X) - \mathbb{E}_if(X)$
		or (in short) $\mathfrak{D}_i = Id - \mathbb{E}_i$. Here, $Id$ denotes the identity and $\mathbb{E}_i$ taking the expectation with respect
		to $X_i$.
		
		\item Let $f(X) \in L^2(P)$, and let $\bar{X}_1, \ldots, \bar{X}_n$ be a set
		of independent copies of the random variables $X_1, \ldots, X_n$. Set $T_if := f(X_1, \ldots, X_{i-1}, \bar{X}_i,\linebreak[2] X_{i+1},
		\ldots, X_n)$ for any function $f(X_1, \ldots, X_n)$. Then, we have
		$$
		\mathfrak{d}_if(X) = (\frac{1}{2}\bar{\mathbb{E}}_i(f(X) - T_if(X))^2)^{1/2}.
		$$
		Here, $\bar{\mathbb{E}}_i$ denotes the expectation with respect to $\bar{X}_i$. By independence, if $\mathbb{E}_i$ denotes the expectation with respect to $X_i$ we can rewrite
		\begin{align}
		\label{nabla2}
		\mathfrak{d}_if(X) &= \Big(\frac{1}{2}\big((f(X) - \mathbb{E}_if(X))^2 + \mathbb{E}_i(f(X) - \mathbb{E}_if(X))^2\big)\Big)^{1/2}\notag\\
		&= \Big(\frac{1}{2}\big((\mathfrak{D}_if(X))^2 + \mathbb{E}_i(\mathfrak{D}_if(X))^2\big)\Big)^{1/2}.
		\end{align}
		
		\item Setting $T_{ij} = T_i \circ T_j$, second order analogues of the formulas for $\mathfrak{d}_i$ are given by
		\begin{align}
		\label{nabla3}
		\mathfrak{d}_{ij} f(X) &= \Big(\frac{1}{4}\bar{\mathbb{E}}_{ij}(f(X) - T_if(X) - T_jf(X) + T_{ij}f(X))^2\Big)^{1/2},\\
		\label{nablaiteriert2}
		\mathfrak{d}_{ij} f &= \Big(\frac{1}{4}\big((\mathfrak{D}_{ij}f)^2 + \mathbb{E}_i(\mathfrak{D}_{ij}f)^2 + \mathbb{E}_j(\mathfrak{D}_{ij}f)^2
		+ \mathbb{E}_{ij}(\mathfrak{D}_{ij}f)^2\big)\Big)^{1/2}
		\end{align}
		for any $i \ne j$. Here, $\bar{\mathbb{E}}_{ij}$ means taking the expectation with respect to
		$\bar{X}_i$ and $\bar{X}_j$, and
		$\mathbb{E}_{ij}$ means taking the expectation with respect to $X_i$ and $X_j$.
		
	\end{enumerate}
\end{remark}

By induction over $n$, $f$ is bounded if and only if $|\mathfrak{D}f|$ is bounded. Using \eqref{nabla2}, the same holds for $|\mathfrak{d} f|$
instead of $|\mathfrak{D}f|$. Moreover, it follows immediately from \eqref{nablaiteriert2} that
\begin{equation}\label{L^2Normengleich}
\int \lVert \mathfrak{d}^{(2)} f \rVert_{\textrm{HS}}^2 d\mu = \int \lVert \mathfrak{D}^{(2)} f \rVert_{\textrm{HS}}^2 d\mu,
\end{equation}
which will turn out to be an important identity in our proof.

For some kind of ``harmonic'' analysis arguments on the symmetric group, we shall need a specific second order operator we would call
``Laplacian''. Since in our discrete setting $\mathfrak{D}_{ii} = \mathfrak{D}_i$ for all $i$, this cannot be $\mathfrak{L} = \sum_i
\mathfrak{D}_{ii}$. Instead, we define
\begin{equation}
\mathfrak{L} := \sum_{i \neq j} \mathfrak{D}_{ij}.
\label{Laplace}
\end{equation}

Calling \eqref{Laplace} a Laplacian is justified for several reasons. First of all, \eqref{Laplace} enjoys similar properties
with respect to scalar products in function spaces (see Lemma \ref{selbstadj} below) compared to the classical Euclidean or spherical
Laplacian. Moreover, if we assume $\mu_i \equiv \mu_1$ for all $i$ in Example \ref{DiskrDiff}, that is for functions of i.i.d. random
variables, the Laplacian \eqref{Laplace} is invariant under permutations, i.\,e.
$\mathfrak{L} f(x) = \mathfrak{L} f(\pi (x))$
for any $\mu$-integrable function $f$ on $\mathbb{R}^n$ and any permutation $\pi$
of $\{1, 2, \ldots, n\}$. As usual, here we set $f(\pi(x)) = f(x_{\pi^{-1}(1)}, \ldots,\linebreak[2] x_{\pi^{-1}(n)})$. This
may be regarded as a discrete analogue of the rotational invariance of the usual Laplacian.

Relating the Hoeffding decomposition to the Laplacian $\mathfrak{L}$ yields the following result:

\begin{theorem}
\label{diagonal}
Let $(\Omega_i, \mathcal{A}_i, \mu_i)$ be probability spaces, and denote by $(\Omega, \mathcal{A}, \mu) := \otimes_{i=1}^n
(\Omega_i,\linebreak[2] \mathcal{A}_i, \mu_i)$ their product. Moreover, let $f$ be some function in $L^1(\mu)$ with Hoeffding decomposition $f = \sum_{d=0}^n f_d$.
Then, we have
\begin{equation*}
\mathfrak{L} f_d = (d)_2 f_d.
\end{equation*}
Here, $\mathfrak{L}$ is the Laplacian as introduced in \eqref{Laplace}, and we write $(d)_2 = d(d-1)$. Thus, the $d$-th Hoeffding term is an
eigenfunction of $\mathfrak{L}$ with eigenvalue $(d)_2$.
\end{theorem}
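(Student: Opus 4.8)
The plan is to reduce the statement to the individual Hoeffding kernels. Write $f_d = \sum_{|S| = d} h_S$, where for a subset $S = \{i_1 < \ldots < i_k\} \subseteq \{1, \ldots, n\}$ we set $h_S := h_{i_1 \ldots i_k}$, recalling the representation $h_S = \big(\prod_{j \notin S} \mathbb{E}_j \prod_{l \in S} \mathfrak{D}_l\big) f$ given just before the statement. First I would record the elementary action of the first-order operators on such a kernel, using $\mathfrak{D}_i = \mathrm{Id} - \mathbb{E}_i$: since $h_S$ does not depend on the coordinate $x_i$ whenever $i \notin S$, we have $\mathbb{E}_i h_S = h_S$ and hence $\mathfrak{D}_i h_S = 0$; and since $\int h_S \, d\mu_i = 0$ whenever $i \in S$, we have $\mathbb{E}_i h_S = 0$ and hence $\mathfrak{D}_i h_S = h_S$. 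Note that all these operators map $L^1(\mu)$ into itself, and for $i \neq j$ the operators $\mathfrak{D}_i$ and $\mathfrak{D}_j$ commute, since they act on distinct coordinates.

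Next I would compute $\mathfrak{D}_{ij} h_S = \mathfrak{D}_i(\mathfrak{D}_j h_S)$ for $i \neq j$ by a short case distinction. If both $i, j \in S$, then $\mathfrak{D}_j h_S = h_S$ and then $\mathfrak{D}_i h_S = h_S$, so $\mathfrak{D}_{ij} h_S = h_S$. In every remaining case at least one of $i, j$ lies outside $S$, and applying the corresponding $\mathfrak{D}$ first annihilates $h_S$, so $\mathfrak{D}_{ij} h_S = 0$. Summing over all ordered pairs $(i,j)$ with $i \neq j$, the surviving terms are exactly those with $i, j \in S$, of which there are $|S|(|S|-1)$, so $\mathfrak{L} h_S = |S|(|S|-1)\, h_S$.

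Finally, using linearity of $\mathfrak{L}$ and $f_d = \sum_{|S| = d} h_S$, we get $\mathfrak{L} f_d = \sum_{|S| = d} |S|(|S|-1)\, h_S = d(d-1)\, f_d = (d)_2 f_d$, which is the claim. I do not expect a genuine obstacle: the only points needing a little care are the well-definedness of $\mathbb{E}_i$, $\mathfrak{D}_i$ and their compositions $\mathfrak{D}_{ij}$ on $L^1(\mu)$ together with the commutation of $\mathfrak{D}_i$ and $\mathfrak{D}_j$ for $i \neq j$, and the bookkeeping that the kernels $h_S$ with $|S| = d$ are exactly the summands of $f_d$ and have the stated mean-zero and coordinate-independence properties — both being immediate from the construction of the Hoeffding decomposition recalled above.
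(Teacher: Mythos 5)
Your proposal is correct and follows essentially the same route as the paper: both reduce to the individual kernels $h_{i_1\ldots i_d}$, use the mean-zero/coordinate-independence properties to show $\mathfrak{D}_{ij}$ preserves a kernel exactly when both $i$ and $j$ lie in its index set and annihilates it otherwise, and then count the $d(d-1)$ surviving ordered pairs. The only cosmetic difference is that you compute kernel-by-kernel whereas the paper first writes out $\mathfrak{D}_i f_d$ and $\mathfrak{D}_{ij} f_d$ as restricted sums and then counts multiplicities.
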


Consequently, there is an orthogonal decomposition of $L^2$-functions $f$ on which the Laplacian operates diagonally.

\begin{proof}
Write $f_d(x_1, \ldots, x_n) = \sum_{i_1< \ldots < i_d} h_{i_1 \ldots i_d}(x_{i_1}, \ldots, x_{i_d})$ as in \eqref{Hoeffding}.
Fix $i_1 < \ldots < i_d$. Then, we get
$$\int h_{i_1 \ldots i_d}(x_{i_1} \ldots, x_{i_d}) \mu_i (d x_i) = \begin{cases} 0, & i \in \{i_1, \ldots, i_d\}, \\
h_{i_1 \ldots i_d}(x_{i_1}, \ldots, x_{i_d}), & i \notin \{i_1, \ldots, i_d\}. \end{cases}$$
Therefore, we have
\begin{align}
\label{D_i f_d}
\mathfrak{D}_if_d(x_1, \ldots, x_n) &= \sum_{\substack{i_1< \ldots < i_d \\ i \in \{i_1, \ldots, i_d\}}} h_{i_1 \ldots i_d}(x_{i_1},
\ldots, x_{i_d}),\\
\label{D_ij f_d}
\mathfrak{D}_{ij}f_d(x_1, \ldots, x_n) &= \sum_{\substack{i_1< \ldots < i_d \\ i, j \in \{i_1, \ldots, i_d\}}} h_{i_1 \ldots i_d}(x_{i_1},
\ldots, x_{i_d}).
\end{align}
Hence it remains to check how often each term $h_{i_1 \ldots i_d}(x_{i_1}, \ldots, x_{i_d})$ appears in $\mathfrak{L} f_d =\linebreak[2] \sum_{i \neq j}
\mathfrak{D}_{ij} f_d$. As we just saw, each pair $i \neq j$ such that $i, j \in \{i_1, \ldots, i_d\}$ replicates the summand
$h_{i_1 \ldots i_d}(x_{i_1}, \ldots, x_{i_d})$ precisely once. As there are $d (d-1) = (d)_2$ such pairs, we arrive at the result.
\end{proof}

In fact, there are at least two larger families of difference operators which satisfy similar ``invariance properties'' with respect to the
symmetric group and the Hoeffding decomposition.
One family of this type can be defined via
$\mathfrak{L}_1 := \sum_i \mathfrak{D}_i$, $\mathfrak{L}_2 := \mathfrak{L}_1^2$ and more generally $\mathfrak{L}_k
:= \mathfrak{L}_1^k$ for any $k \in \{1, 2, \ldots, n\}$. Another one is given by
$\mathfrak{L}^*_k := \sum_{i_1 \ne i_2 \ne \ldots \ne i_k } \mathfrak{D}_{i_1} \ldots \mathfrak{D}_{i_k}$
for any $k \in \{1, 2, \ldots, n\}$. It is possible to relate these two
families to each other by representing the $\mathfrak{L}^*_k$ as polynomials in $\mathfrak{L}_1$, e.\,g. we have $\mathfrak{L}^*_2 =
\mathfrak{L}_1^2 - \mathfrak{L}_1$.

As in the proof of Theorem \ref{diagonal}, simple combinatorial arguments show that all the
$\mathfrak{L}_k$ and $\mathfrak{L}^*_k$ operate diagonally on the Hoeffding decomposition. In case of the $\mathfrak{L}^*_k$,
the eigenvalues of the Hoeffding terms of order up to $k-1$ are $0$.

In particular, with $\mathfrak{L}$ as in \eqref{Laplace}, we see that we have $\mathfrak{L} = \mathfrak{L}^*_2$. In other words, $\mathfrak{L}$ is the
second order difference invariant operator which annihilates the Hoeffding terms up to first order. This is in accordance
with our basic concept of second order concentration.


\section{Modified Logarithmic Sobolev Inequalities and Exponential Inequalities}

Let $\mu$ be a probability measure on some measurable space
$(\Omega, \mathcal{A})$ and $g \colon \Omega \to [0, \infty)$ a measurable function. Then, we define the entropy of $g$ with respect to
$\mu$ by
$\text{Ent}(g) := \text{Ent}_\mu(g) := \int g \log g d\mu - \int g d\mu \log \int g d\mu$.
Here, we set $\text{Ent}(g) := \infty$ if any of the integrals involved does not exist. A natural condition for existence of entropy is whether 
the integral of $g \log(1+g)$ is finite or not. It is well-known that by Jensen's inequality, we have $\text{Ent}(g) \in [0, \infty]$.
As a modification of the usual logarithmic Sobolev inequality, we now define

\begin{definition}
\label{DLSU}
Let $\mu$ be a probability measure on some measurable space $(\Omega, \mathcal{A})$, and let $\Gamma$ be a difference operator on this
space satisfying Conditions \ref{DiskrDiffallgem}. Then, $\mu$ satisfies a modified logarithmic Sobolev inequality with constant
$\sigma^2 > 0$ with respect to $\Gamma$ if for any bounded measurable function $f \colon \Omega \to \mathbb{R}$
\begin{equation}
\label{mLSI}
\text{\emph{Ent}}(e^f) \le \frac{\sigma^2}{2} \int |\Gamma f|^2 e^f d\mu.
\end{equation}
Here, $|\Gamma f|$ denotes the Euclidean norm of the gradient $\Gamma f$.
\end{definition}

This definition goes back to \cite{B-G1}, where it is called $\text{LSI}_{\sigma^2}$. The term ``modified logarithmic Sobolev inequality''
is due to \cite[Chapter 5.3]{L3}, where other modifications of logarithmic Sobolev inequalities are discussed as well.
The difference between the usual form of the LSI and the modified one in \eqref{mLSI} is motivated by the fact that difference
operators do not necessarily satisfy any sort of chain rule. The number $\sigma^2 > 0$ is also called \emph{Sobolev
constant}. When using $\sigma$ instead of $\sigma^2$ itself, we will always assume it to be positive.

We will use Definition \ref{DLSU} with $\Gamma = \mathfrak{d}$. Note that setting
$\Gamma = \mathfrak{D}$
would be too restrictive since in this case, only discrete probability measures with a finite number of atoms would have a chance to
fulfill a modified LSI of type \eqref{mLSI}. By contrast, in case of $\mathfrak{d}$ we have the following:

\begin{proposition}
\label{DLSUfueralle}
Let $\mu$ be any probability measure on some measurable space $(\Omega, \mathcal{A})$. Then, $\mu$ satisfies the modified LSI
\eqref{mLSI} with Sobolev constant $\sigma^2 = 2$ with respect to the gradient operator $\mathfrak{d}$ from \eqref{DiskrDiff2}.
\end{proposition}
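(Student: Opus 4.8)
The plan is to prove the modified logarithmic Sobolev inequality for $\mathfrak{d}$ with Sobolev constant $\sigma^2 = 2$ by tensorization: reduce to the case $n = 1$ and there verify the inequality by an elementary argument. Tensorization works because, for any product measure, entropy subadditivizes: for a nonnegative function $g$ on $\Omega = \otimes_i \Omega_i$ one has
$$\mathrm{Ent}_\mu(g) \le \sum_{i=1}^n \int \mathrm{Ent}_{\mu_i}(g)\,d\mu,$$
where $\mathrm{Ent}_{\mu_i}(g)$ denotes the entropy taken in the $i$-th coordinate only (with the others frozen). Applying this to $g = e^f$, it suffices to bound each $\int \mathrm{Ent}_{\mu_i}(e^f)\,d\mu$ by $\int (\mathfrak{d}_i f)^2 e^f\,d\mu$, since then summing over $i$ and using $\sum_i (\mathfrak{d}_i f)^2 = |\mathfrak{d} f|^2$ together with $\sigma^2/2 = 1$ yields \eqref{mLSI}. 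So the whole theorem comes down to the one-dimensional statement: for any probability measure $\nu$ on $(\Omega_1, \mathcal{A}_1)$ and any (suitably integrable) $f$,
$$\mathrm{Ent}_\nu(e^f) \le \int \Big(\tfrac{1}{2}\int (f(x) - f(y))^2\,\nu(dy)\Big) e^{f(x)}\,\nu(dx).$$

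To prove this one-dimensional bound I would use the variational (dual) characterization of entropy,
$$\mathrm{Ent}_\nu(e^f) = \sup\Big\{ \int f e^f\,d\nu - \Big(\int e^f\,d\nu\Big)\log\int e^f\,d\nu \Big\},$$
or more directly exploit the fact that $\mathrm{Ent}_\nu(e^f) = \inf_{c>0}\int \big(e^f(f - \log c) - e^f + c\big)\,d\nu$, minimized at $c = \int e^f\,d\nu$. Dropping to the non-optimal choice $c = e^{f(y)}$ and integrating in $y$ against $\nu$ (using $\int e^{f}\,d\nu \le \int e^{f(y)}\,d\nu$ trivially, or rather Jensen to compare $\log\int e^f$), one gets
$$\mathrm{Ent}_\nu(e^f) \le \tfrac{1}{2}\iint \big(e^{f(x)} + e^{f(y)} - e^{f(x)} - e^{f(y)} + (f(x) - f(y))(e^{f(x)} - \cdots)\big)\cdots$$
— the clean way is: for all real $a, b$ one has the elementary inequality $e^a(a - b) - e^a + e^b \le \tfrac12 (a-b)^2 \max(e^a, e^b) \le \tfrac12(a-b)^2(e^a + e^b)$, and after symmetrizing in $x \leftrightarrow y$ the cross terms combine to exactly $\tfrac12\iint (f(x) - f(y))^2 e^{f(x)}\,\nu(dx)\nu(dy)$, which is $\int (\mathfrak{d}_1 f)^2 e^f\,d\nu$ by definition \eqref{DiskrDiff2}. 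This is essentially the classical two-point / symmetrization argument behind modified log-Sobolev inequalities (cf. \cite{B-G1}, \cite{L3}).

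The main obstacle is making the pointwise inequality and the symmetrization yield precisely the constant claimed, i.e. getting $\sigma^2 = 2$ rather than something worse, and handling the integrability bookkeeping (the suprema/infima in the variational formula, and the requirement in Definition \ref{DLSU} that the relevant integrals be finite, so that all manipulations are legitimate). The pointwise step needs the right elementary lemma: $\phi(a,b) := e^b - e^a - (b-a)e^a \le \tfrac12(b-a)^2 e^{\max(a,b)}$, proved by Taylor expansion with integral remainder, and then one must be careful that symmetrizing turns $e^{\max(a,b)}$ into a bound involving $e^a$ (the value at the "base point") rather than $e^b$ — this is exactly where the factor $\tfrac12$ in \eqref{DiskrDiff2} earns its keep, because $\tfrac12(e^a + e^b) \le e^{\max(a,b)}$ fails, but the symmetrized sum $\tfrac12[(b-a)^2 e^a + (a-b)^2 e^b]$ is what appears and one only needs to recognize $\iint (f(x)-f(y))^2 e^{f(x)}$ as $2\int(\mathfrak d_1 f)^2 e^f\,d\nu$, absorbing the $2$ into $\sigma^2$. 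Everything else (subadditivity of entropy for product measures) is standard and may be quoted.
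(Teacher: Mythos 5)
Your overall route is the same as the paper's: reduce to dimension one (the extension to products is the separate tensorization Lemma \ref{DLSU-Prod}), bound the entropy by a covariance-type double integral, and finish with an elementary pointwise inequality plus symmetrization. Indeed, your choice $c=e^{f(y)}$ in the variational formula, averaged over $y$, gives exactly
\begin{equation*}
\mathrm{Ent}_\nu(e^f)\;\le\;\iint\phi(f(x),f(y))\,\nu(dx)\nu(dy)\;=\;\tfrac12\iint(f(x)-f(y))\bigl(e^{f(x)}-e^{f(y)}\bigr)\,\nu(dx)\nu(dy),
\end{equation*}
with $\phi(a,b)=e^b-e^a-(b-a)e^a$; this is the covariance $\mathrm{Cov}_\nu(f,e^f)$, i.e.\ the same intermediate quantity the paper reaches in one line via Jensen's inequality.

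The gap is in the step you yourself flag as the main obstacle: the order of symmetrization versus the pointwise bound, which is exactly what decides the constant. If you apply $\phi(a,b)\le\tfrac12(a-b)^2(e^a+e^b)$ \emph{before} symmetrizing, then symmetrizing the resulting upper bound gives $\iint(f(x)-f(y))^2e^{f(x)}\,\nu(dx)\nu(dy)=2\int(\mathfrak d_1 f)^2e^f\,d\nu$, i.e.\ only $\sigma^2=4$. The sharper pointwise bound $\phi(a,b)\le\tfrac14(a-b)^2(e^a+e^b)$ that would rescue this route is false: at $a=0$, $b=-1$ the left side is $e^{-1}\approx0.368$ while the right side is $\tfrac14(1+e^{-1})\approx0.342$. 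The correct order is to symmetrize the \emph{exact} expression first, using $\tfrac12(\phi(a,b)+\phi(b,a))=\tfrac12(b-a)(e^b-e^a)$, and only then apply the elementary inequality $(a-b)(e^a-e^b)\le\tfrac12(a-b)^2(e^a+e^b)$; this yields $\tfrac14\iint(f(x)-f(y))^2(e^{f(x)}+e^{f(y)})\,d\nu\,d\nu=\tfrac12\iint(f(x)-f(y))^2e^{f(x)}\,d\nu\,d\nu=\int(\mathfrak d_1 f)^2e^f\,d\nu$, hence $\sigma^2=2$. (Your parenthetical claim that $\tfrac12(e^a+e^b)\le e^{\max(a,b)}$ ``fails'' is also backwards---that inequality is true; it is its reverse that fails, and in any case it is not the issue.) With this reordering your argument coincides with the paper's proof.
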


\begin{proof}
This is due to \cite{B-G2} and essentially based on \cite{L1}. For the reader's convenience we include a sketch of its proof here. First, we apply Jensen's inequality to get
\begin{align*}
\text{Ent}_\mu(e^g) \le \text{Cov}_\mu(g,e^g)
&= \frac{1}{2} \iint (g(x)-g(y))(e^{g(x)}-e^{g(y)}) \mu(dx) \mu(dy)\displaybreak[2]\\
&\le \frac{1}{4} \iint (g(x)-g(y))^2(e^{g(x)}+e^{g(y)}) \mu(dx) \mu(dy)
= \int |\mathfrak{d} g|^2 e^g d\mu.
\end{align*}
Here $g$ is any real-valued measurable function on $\Omega$ such that the integrals involved are finite, and the next-to-last step
uses the elementary estimate $(a-b)(e^a-e^b) \le \frac{1}{2} (a-b)^2(e^a+e^b)$ for all $a, b \in \mathbb{R}$. However, this means
that $\mu$ satisfies the modified LSI \eqref{mLSI} with Sobolev constant $\sigma^2 = 2$.
\end{proof}

If we especially consider two-point measures, the Sobolev constant can still be improved a little:

\begin{proposition}
\label{DLSUfuerBernoulli}
Let $\mu = p \delta_{+1} + (1-p) \delta_{-1}$ for some $p \in (0,1)$, where $\delta_x$ denotes the Dirac measure in $x \in
\mathbb{R}$. Then, $\mu$ satisfies the modified LSI \eqref{mLSI} with Sobolev constant $\sigma^2 = 1$ with respect to $\mathfrak{d}$ as in
\eqref{DiskrDiff2}.
\end{proposition}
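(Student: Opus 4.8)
The plan is to refine the argument of Proposition~\ref{DLSUfueralle} in the special case of a two-point measure, where the support has only two points and the ``gradient'' $\mathfrak{d}$ carries essentially all the information. Recall that for $\mu = p\delta_{+1} + (1-p)\delta_{-1}$ and any function $g$ on $\{+1,-1\}$, writing $g(+1) = u$ and $g(-1) = v$, the definition \eqref{DiskrDiff2} in dimension one gives $\mathfrak{d} g(+1) = \mathfrak{d} g(-1) = \big(\tfrac12 p(1-p)\big)^{1/2}\,|u-v| \cdot \sqrt{2}$... more carefully, $(\mathfrak{d} g(x))^2 = \tfrac12\int (g(x)-g(y))^2\mu(dy)$, so at $x=+1$ this equals $\tfrac12(1-p)(u-v)^2$ and at $x=-1$ it equals $\tfrac12 p(u-v)^2$. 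Hence $\int|\mathfrak{d} g|^2 e^g\,d\mu = \tfrac12(u-v)^2\big(p(1-p)e^u + (1-p)p\,e^v\big) = \tfrac12 p(1-p)(u-v)^2(e^u+e^v)$. So the goal is to show
$$\text{Ent}_\mu(e^g) \le \frac{1}{2}\int|\mathfrak{d} g|^2 e^g\,d\mu = \frac14 p(1-p)(u-v)^2(e^u+e^v),$$
i.e. to improve the constant from $2$ to $1$ by replacing the crude bound $(a-b)(e^a-e^b)\le\tfrac12(a-b)^2(e^a+e^b)$ with a sharper estimate that is available because the two-point entropy has an exact closed form.

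First I would reduce to a one-parameter problem. By the scaling and translation insensitivity of entropy (it is invariant under $g\mapsto g+\text{const}$ and homogeneous of degree one in $e^g$) together with the structure of the right-hand side, I may normalize so that the computation depends only on $s := u - v$ (and on $p$). Then $\text{Ent}_\mu(e^g)$ becomes an explicit elementary function $\Phi(s)$ of $s$ involving $p$, and the right-hand side becomes $\tfrac14 p(1-p)s^2(e^s+1)$ after factoring out $e^v$. The claim reduces to the scalar inequality $\Phi(s) \le \tfrac14 p(1-p)s^2(e^s+1)$ for all $s\in\mathbb{R}$, which I would verify by a direct analysis: both sides vanish to second order at $s=0$ with matching second-order Taylor coefficient $\tfrac14 p(1-p)$, so one needs to check the inequality holds globally. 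This is most cleanly done by introducing $\psi(s) := $ (RHS $-$ LHS), showing $\psi(0)=\psi'(0)=0$, and then either proving $\psi''\ge 0$ throughout or, if that fails, splitting into the regimes $|s|$ small and $|s|$ large; for large $|s|$ the term $s^2 e^s$ (or $s^2$) on the right dominates and the inequality is clear, while for small $|s|$ one uses the Taylor expansion.

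A cleaner route, which I would actually prefer, is to avoid explicit parametrization and instead sharpen the ``elementary estimate'' step of Proposition~\ref{DLSUfueralle} itself: observe that in the two-point case the double integral $\tfrac12\iint(g(x)-g(y))(e^{g(x)}-e^{g(y)})\mu(dx)\mu(dy)$ has only the cross terms, so it equals exactly $p(1-p)(u-v)(e^u-e^v)$, and similarly $\int|\mathfrak{d} g|^2 e^g\,d\mu = \tfrac12 p(1-p)(u-v)^2(e^u+e^v)$. Thus the chain $\text{Ent}\le\text{Cov}$ followed by $(a-b)(e^a-e^b)\le\tfrac12(a-b)^2(e^a+e^b)$ already gives $\text{Ent}\le\int|\mathfrak{d} g|^2 e^g\,d\mu$, which is $\sigma^2=2$; to reach $\sigma^2=1$ I must improve the first inequality $\text{Ent}_\mu(e^g)\le\text{Cov}_\mu(g,e^g)$, which came from Jensen, by a factor of $2$ in the two-point case — or, better, improve the second. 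In fact the key point is that on a two-point space one can use the \emph{exact} formula for entropy rather than bounding it by the covariance, together with the sharp inequality $e^a + e^b - 2e^{(a+b)/2} \ge 0$ refinements. Concretely, I would write $\text{Ent}_\mu(e^g)$ in closed form and compare directly against $\tfrac14 p(1-p)(u-v)^2(e^u+e^v)$.

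The main obstacle is precisely this scalar inequality: proving $\text{Ent}_\mu(e^g) \le \tfrac14 p(1-p)(u-v)^2(e^u+e^v)$ for all real $u,v$ and all $p\in(0,1)$. The difficulty is that the constant $\tfrac14$ is sharp (it is forced by the second-order Taylor expansion at $u=v$), so there is no slack to exploit with soft arguments; one genuinely needs a monotonicity or convexity computation. I expect the cleanest proof to set $t = p$, fix the ratio, differentiate in $s=u-v$, and reduce to showing that a certain explicit function of $s$ alone (after optimizing over $p$, the worst case being $p=1/2$ by symmetry and concavity of $p\mapsto p(1-p)$ does \emph{not} immediately help since $p(1-p)$ appears on the larger side — so actually the constraint must be checked for all $p$) is nonnegative. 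I would handle this by checking the worst case reduces to $p=1/2$, where the inequality becomes $\tfrac12[h\log h + (1-h)\log(1-h)] + \log 2 \le \tfrac{1}{16}(\log\tfrac{h}{1-h})^2 \cdot(\cdots)$ type bound after substituting $h = e^u/(e^u+e^v)$; this is a single-variable inequality on $(0,1)$ amenable to a direct derivative check or to a known sharp form of the two-point logarithmic Sobolev inequality for Bernoulli($1/2$).
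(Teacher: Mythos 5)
Your reduction is correct as far as it goes: unwinding \eqref{DiskrDiff2} on a two\--point space shows that the modified LSI \eqref{mLSI} with $\sigma^2=1$ is exactly the scalar inequality
\[
\text{Ent}_\mu(e^g)\;\le\;\tfrac14\,p(1-p)\,(u-v)^2\,(e^u+e^v),\qquad u=g(+1),\ v=g(-1),
\]
and you rightly note that both sides agree to second order at $u=v$, so there is no slack. But the proposal stops exactly where the proof would have to happen: every route you describe ends in ``I would verify by a direct analysis'' or ``amenable to a direct derivative check,'' and that verification \emph{is} the entire content of the proposition. (For what it is worth, the paper does not prove this statement either; it defers to \cite{B-G2}, so a self-contained argument must genuinely carry out this step.)

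More seriously, the concrete plan you outline would fail. You suggest the worst case is $p=1/2$; it is the opposite. At $p=1/2$ the scalar inequality holds for all $s=u-v$, with asymptotic equality only as $s\to 0$, whereas for skewed $p$ it breaks down at moderate $s$. Take $p=0.9$, $u=-1$, $v=0$: then $\text{Ent}_\mu(e^g)=-0.9e^{-1}-(0.9e^{-1}+0.1)\log(0.9e^{-1}+0.1)\approx 0.0316$, while $\tfrac14\cdot 0.9\cdot 0.1\cdot(e^{-1}+1)\approx 0.0308$. Indeed, in the limit $p\to 1$ the target reduces to $e^s-s-1\le\tfrac14 s^2(e^s+1)$, which fails for every $s\in(-2,0)$ (at $s=-1$ the left side is $e^{-1}\approx 0.368$ and the right side is $\tfrac14(e^{-1}+1)\approx 0.342$). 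So no convexity or $\psi''\ge 0$ computation can close the argument uniformly in $p$: the constant $1$ is attainable in the symmetric case but your reduction shows it cannot hold verbatim for $p$ near $0$ or $1$, and a correct treatment must either restrict to $p=1/2$, track a $p$-dependent constant, or proceed by a different mechanism than the sharpened two-point estimate you propose.
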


This is again due to \cite{B-G2}, and we omit the proof here. It is easy to verify that for instance in case of $p = \frac{1}{2}$, this
constant is optimal.

From Propositions \ref{DLSUfueralle} and \ref{DLSUfuerBernoulli}, we can easily go on to product spaces by the following tensorization
property which goes back to \cite{L1}:

\begin{lemma}
\label{DLSU-Prod}
For all $i = 1, \ldots, n$, let $(\Omega_i, \mathcal{A}_i)$ be measurable spaces equipped with probability measures $\mu_i$ each
satisfying the modified LSI \eqref{mLSI} with Sobolev constants $\sigma_i^2 > 0$ with respect to $\mathfrak{d}$ as in \eqref{DiskrDiff2}.
Then, the product measure $\mu_1 \otimes \ldots \otimes \mu_n$ on $(\Omega_1 \times \ldots \times \Omega_n, \mathcal{A}_1 \otimes
\ldots \otimes \mathcal{A}_n)$ also satisfies the modified LSI \eqref{mLSI} with Sobolev constant $\sigma^2 = \max_{i = 1, \ldots, n}
\sigma_i^2$ with respect to $\mathfrak{d}$.
\end{lemma}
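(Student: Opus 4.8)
The plan is to deduce the tensorized inequality from the one-dimensional modified logarithmic Sobolev inequalities via the subadditivity of entropy. Recall the classical fact that for a product probability space $(\Omega,\mathcal{A},\mu)=\otimes_{i=1}^n(\Omega_i,\mathcal{A}_i,\mu_i)$ and any measurable $g\colon\Omega\to[0,\infty)$ with finite entropy one has
$$\text{Ent}_\mu(g)\le\sum_{i=1}^n\int\text{Ent}_{\mu_i}(g)\,d\mu,$$
where $\text{Ent}_{\mu_i}(g)(x)$ denotes the entropy, with respect to $\mu_i$, of the one-variable function $y_i\mapsto g(x_1,\ldots,x_{i-1},y_i,x_{i+1},\ldots,x_n)$, the remaining coordinates being frozen at $x$. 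I would either invoke this directly (it is standard, e.g.\ in the monograph literature on log-Sobolev inequalities) or include the short proof based on the dual representation $\text{Ent}_\mu(g)=\sup\{\int gh\,d\mu\colon\int e^h\,d\mu\le1\}$ combined with a telescoping argument over the $n$ coordinates.

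First I would apply this subadditivity estimate to $g=e^f$, where $f$ is a measurable function for which the integrals appearing in \eqref{mLSI} are finite, so that Fubini's theorem applies and all the conditional entropies below are a.s.\ well defined. This gives
$$\text{Ent}_\mu(e^f)\le\sum_{i=1}^n\int\text{Ent}_{\mu_i}(e^f)\,d\mu.$$
Then, for $\mu$-almost every configuration of the coordinates other than the $i$-th, the inner entropy $\text{Ent}_{\mu_i}(e^f)$ is the entropy of the exponential of the one-variable function $y_i\mapsto f(x_1,\ldots,y_i,\ldots,x_n)$. Since $\mu_i$ satisfies the modified LSI \eqref{mLSI} with constant $\sigma_i^2$ with respect to $\mathfrak{d}$ — which in dimension one, applied to this frozen function, is exactly $\mathfrak{d}_i f$ from \eqref{DiskrDiff2} — we obtain
$$\text{Ent}_{\mu_i}(e^f)\le\frac{\sigma_i^2}{2}\int(\mathfrak{d}_if)^2e^f\,d\mu_i\le\frac{\sigma^2}{2}\int(\mathfrak{d}_if)^2e^f\,d\mu_i,$$
using $\sigma_i^2\le\sigma^2=\max_{j}\sigma_j^2$. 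Summing over $i$ and integrating over the remaining coordinates (again by Fubini) yields
$$\text{Ent}_\mu(e^f)\le\frac{\sigma^2}{2}\int\sum_{i=1}^n(\mathfrak{d}_if)^2e^f\,d\mu=\frac{\sigma^2}{2}\int|\mathfrak{d}f|^2e^f\,d\mu,$$
which is precisely the modified LSI \eqref{mLSI} for $\mu$ with constant $\sigma^2$, proving Lemma \ref{DLSU-Prod}.

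The only genuinely non-routine ingredient is the subadditivity of entropy, together with the check that it may be applied under the stated integrability hypotheses rather than merely for bounded $g$; the remaining steps are bookkeeping with Fubini's theorem and the elementary observation that each $\mathfrak{d}_i$ acts only on the $i$-th variable, so that freezing the other coordinates reduces the estimate to the one-dimensional modified LSI. I expect the verification of the integrability conditions needed to legitimately apply Fubini, and the a.s.\ finiteness of the conditional entropies, to be the only points requiring some care.
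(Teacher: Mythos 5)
Your proof is correct and takes essentially the same route as the paper, which derives the lemma from the subadditivity of entropy together with the additivity $|\mathfrak{d}f|^2=\sum_{i=1}^n(\mathfrak{d}_if)^2$ and the fact that each $\mathfrak{d}_i$ reduces to the one-dimensional operator on the frozen function. The paper only states this in one sentence without writing out the details you supply, so your elaboration (including the integrability caveats) is a faithful expansion of the intended argument.
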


As in the case the usual logarithmic Sobolev inequality, this is a consequence of the subadditivity (or tensorization) property of the entropy functional
together with the additivity property of the gradient operator $\mathfrak{d}$. Therefore, Propositions \ref{DLSUfueralle} and \ref{DLSUfuerBernoulli}
naturally extend to product measures.

\section{Exponential Inequalities}

In this section, we derive exponential moment inequalities for functions of independent random variables.
Consider any probability measure
on some measurable space $(\Omega, \mathcal{A})$ which satisfies the modified LSI \eqref{mLSI} with Sobolev
constant $\sigma^2 > 0$ with respect to $\mathfrak{d}$.
In \cite{B-G1}, it was proved that for all bounded
measurable functions $f \colon \Omega \to \mathbb{R}$ such that $\int f d\mu = 0$, we have
\begin{equation}
\int e^f d\mu \leq \int e^{\sigma^2 |\mathfrak{d} f|^2} d\mu.
\label{A'}
\end{equation}
The proof of \eqref{A'} is similar to the proof of inequality \eqref{B-G1-Ungl} which
will be sketched in the proof of Lemma \ref{ExpUnglVorstufe}.

In addition to \eqref{A'}, we need a second inequality of the form
$$\int e^{t u^2} d\mu \leq \exp\Big(c(t) \int u^2 d\mu\Big)$$
for small $t$ and some constant $c$ depending on $t$. An inequality of the desired form due to
\cite{A-M-S} is known if the
underlying gradient operator satisfies the chain rule (cf. \eqref{B} in Section 7). Here, the main argument for which the chain
rule is needed is as follows: let $\nabla$ denote the usual gradient and $|\nabla f|$ its Euclidean norm. Then, if
we assume $|\nabla f| \leq 1$, we immediately get
$|\nabla f^2| = 2 |f| |\nabla f| \leq 2 |f|$.
However, if we replace $\nabla$ by the $L^2$-difference operator $\mathfrak{d}$ from \eqref{DiskrDiff2},
such an inequality does not hold.

This desirable property is restored by switching to yet another difference operator which we denote by $\mathfrak{d}^+$. In detail,
\begin{equation}
\label{DiskrDiff3}
\mathfrak{d}^+_i f(x) := \Big(\frac{1}{2} \int_{\Omega_i}(f(x)-f(x_1, \ldots, x_{i-1}, y_i, x_{i+1}, \ldots, x_n))_+^2 \mu_i(dy_i)\Big)^{1/2}.
\end{equation}
Here, $f \colon \Omega \to \mathbb{R}$ is any function in $L^2(\mu)$, and $g_+ := \max(g, 0)$ denotes the positive part of any real-valued function $g$. As always, $\mathfrak{d}^+f = (\mathfrak{d}_1^+f, \ldots, \mathfrak{d}_n^+f)$.

Let $f \colon \Omega \to \mathbb{R}$ be any measurable function on some probability
space $(\Omega, \mathcal{A}, \mu)$. Then, for any $x, y \in \Omega$ we have
\begin{equation*}
(f(x)^2 -f(y)^2)_+^2 = (|f(x)| + |f(y)|)^2(|f(x)| - |f(y)|)_+^2
\le 4 |f(x)|^2 (|f(x)| - |f(y)|)_+^2.
\end{equation*}
Taking integrals and roots, we thus get that for any function $f \colon \Omega \to \mathbb{R}$ in $L^2(\mu)$ such that $|\mathfrak{d}^+ |f||
\le 1$, we have
\begin{equation}
\label{Kettenregelimitation}
|\mathfrak{d}^+ f^2| \le 2 |f|.
\end{equation}
The same holds for product measures, i.\,e. the multivariate case.

In the sequel, we also need modified LSI results for $\mathfrak{d}^+$. It is easily seen
that if some measurable space $(\Omega, \mathcal{A})$ equipped with a probability measure $\mu$ satisfies the modified LSI
\eqref{mLSI} with Sobolev constant $\sigma^2 > 0$ with respect to $\mathfrak{d}$, it also satisfies the modified LSI \eqref{mLSI}
with respect to $\mathfrak{d}^+$, and the Sobolev constant can be chosen $2 \sigma^2$.
Hence, we can transport Propositions \ref{DLSUfueralle} and \ref{DLSUfuerBernoulli} and Lemma
\ref{DLSU-Prod} to the $\mathfrak{d}^+$ difference operators. In fact, results of this type can already be found in \cite{L3} (Proposition 5.8) or \cite{B-L-M} (e.\,g. Proposition 10).

\begin{proposition}
	\label{DLSUnablaplus}
	For all $i = 1, \ldots, n$, let $(\Omega_i, \mathcal{A}_i)$ be measurable spaces equipped with probability measures $\mu_i$. Then,
	the product measure $\mu_1 \otimes \ldots \otimes \mu_n$ on $(\Omega_1 \times \ldots \times \Omega_n, \mathcal{A}_1 \otimes \ldots \otimes
	\mathcal{A}_n)$ satisfies the modified LSI \eqref{mLSI} with Sobolev constant $\sigma^2 = 4$ with respect to $\mathfrak{d}^+$ as in
	\eqref{DiskrDiff3}. If all the $\Omega_i$ are two-point spaces, we can take $\sigma^2 = 2$.
\end{proposition}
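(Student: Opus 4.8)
The plan is to reduce everything to the one-dimensional modified LSI for $\mathfrak{d}$ already established in Propositions \ref{DLSUfueralle} and \ref{DLSUfuerBernoulli}, via two elementary steps: a coordinate-wise passage from $\mathfrak{d}$ to $\mathfrak{d}^+$ that costs a factor $2$ in the Sobolev constant, followed by the same tensorization argument as in Lemma \ref{DLSU-Prod}.

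For the first step, fix a probability space $(\Omega, \mathcal{A}, \mu)$, regarded as the one-dimensional case, and a measurable $g$ for which the relevant integrals are finite, and suppose $\text{Ent}(e^g) \le \frac{\sigma^2}{2} \int |\mathfrak{d} g|^2 e^g \, d\mu$. I would rewrite the right-hand side using $|\mathfrak{d} g|^2(x) = \frac12 \int (g(x)-g(y))^2 \mu(dy)$ as a double integral, split $(g(x)-g(y))^2 = (g(x)-g(y))_+^2 + (g(y)-g(x))_+^2$, and symmetrize in $x \leftrightarrow y$ to obtain
$$\int |\mathfrak{d} g|^2 e^g \, d\mu = \frac12 \iint (g(x)-g(y))_+^2 \big(e^{g(x)} + e^{g(y)}\big) \, \mu(dx)\mu(dy).$$
On the set where $(g(x)-g(y))_+ > 0$ one has $g(x) > g(y)$, hence $e^{g(x)} + e^{g(y)} \le 2 e^{g(x)}$, while on its complement the integrand vanishes; this gives $\int |\mathfrak{d} g|^2 e^g \, d\mu \le 2 \int |\mathfrak{d}^+ g|^2 e^g \, d\mu$ and therefore $\text{Ent}(e^g) \le \sigma^2 \int |\mathfrak{d}^+ g|^2 e^g \, d\mu$, i.e. a modified LSI with respect to $\mathfrak{d}^+$ with constant $2\sigma^2$. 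Substituting $\sigma^2 = 2$ from Proposition \ref{DLSUfueralle} (respectively $\sigma^2 = 1$ from Proposition \ref{DLSUfuerBernoulli} when $\mu$ is a two-point measure) yields the one-dimensional constant $4$ (respectively $2$).

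For the second step, I would note that $|\mathfrak{d}^+ f|^2 = \sum_{i=1}^n (\mathfrak{d}^+_i f)^2$ and that $\mathfrak{d}^+_i f$ depends on $f$ only through perturbations of its $i$-th coordinate, so the proof of Lemma \ref{DLSU-Prod} — subadditivity of the entropy functional over the product together with coordinatewise additivity of the squared gradient — carries over verbatim with $\mathfrak{d}$ replaced by $\mathfrak{d}^+$, giving a modified LSI for $\mu_1 \otimes \cdots \otimes \mu_n$ with constant $\max_{i} \sigma_i^2$. Combining this with the one-dimensional bounds above proves both assertions.

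I do not anticipate a real obstacle: the only points needing care are the symmetrization identity, which relies on Fubini and the symmetry of the double integral (hence on the finiteness built into Definition \ref{DLSU}), and making sure the bound $e^{g(x)} + e^{g(y)} \le 2 e^{g(x)}$ is invoked only where $(g(x)-g(y))_+ \neq 0$.
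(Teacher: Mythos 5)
Your proposal is correct and follows essentially the same route the paper takes: it transfers the one-dimensional $\mathfrak{d}$-LSI of Propositions \ref{DLSUfueralle} and \ref{DLSUfuerBernoulli} to $\mathfrak{d}^+$ at the cost of a factor $2$ in the Sobolev constant, and then tensorizes as in Lemma \ref{DLSU-Prod}. The symmetrization identity and the observation that $e^{g(x)}+e^{g(y)}\le 2e^{g(x)}$ on the support of $(g(x)-g(y))_+$ supply exactly the detail behind the paper's assertion that the constant $2\sigma^2$ ``is easily seen''.
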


Now \eqref{Kettenregelimitation} leads us back to the basic inequality needed to estimate large deviations in \cite{B-G1}. We
therefore arrive at the following lemma:

\begin{lemma}
Let $\mu$ be a probability measure on some measurable space $(\Omega, \mathcal{A})$ which satisfies the modified LSI \eqref{mLSI}
with Sobolev constant $\tilde{\sigma}^2 > 0$ with respect to the gradient operator $\mathfrak{d}^+$ from \eqref{DiskrDiff3}. Moreover,
let $f \colon \Omega \to \mathbb{R}$ be a bounded measurable function such that $|\mathfrak{d} |f|| \le 1$. Then, for all $t \in [0, \frac{1}{2 \tilde{\sigma}^2})$ we have
\begin{equation}
\label{B'}
\int e^{t f^2} d\mu \leq \exp\left(\frac{t}{1 - 2 \tilde{\sigma}^2 t} \int f^2 d\mu\right).
\end{equation}
\label{ExpUnglVorstufe}
\end{lemma}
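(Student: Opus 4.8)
The plan is to mimic the classical Aida--Masuda--Shigekawa argument for deriving exponential integrability bounds from a logarithmic Sobolev inequality, but with the ordinary gradient replaced by $\mathfrak{d}^+$. First I would fix $t \in [0, \frac{1}{2\tilde\sigma^2})$ and introduce the function $F(t) := \int e^{t f^2} d\mu$, together with its logarithmic version $H(t) := \frac{1}{t}\log F(t)$ (suitably interpreted at $t=0$ as $\int f^2 d\mu$). The standard device is to apply the modified LSI \eqref{mLSI} to the function $g := t f^2$: this gives
$$\mathrm{Ent}(e^{tf^2}) \le \frac{\tilde\sigma^2}{2} \int |\mathfrak{d}^+(tf^2)|^2 e^{tf^2} d\mu = \frac{\tilde\sigma^2 t^2}{2} \int |\mathfrak{d}^+ f^2|^2 e^{tf^2} d\mu.$$
Now the crucial input is inequality \eqref{Kettenregelimitation}, valid because we assumed $|\mathfrak{d}|f|| \le 1$ (and hence $|\mathfrak{d}^+|f|| \le 1$, since $\mathfrak{d}^+_i \le \mathfrak{d}_i$ pointwise): it yields $|\mathfrak{d}^+ f^2|^2 \le 4 f^2$, so that
$$\mathrm{Ent}(e^{tf^2}) \le 2\tilde\sigma^2 t^2 \int f^2 e^{tf^2} d\mu.$$

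Next I would translate this entropy bound into a differential inequality for $F$. Observe that $F'(t) = \int f^2 e^{tf^2} d\mu$ and that by definition
$$\mathrm{Ent}(e^{tf^2}) = \int tf^2 e^{tf^2} d\mu - F(t)\log F(t) = t F'(t) - F(t)\log F(t).$$
Substituting into the entropy estimate gives $t F'(t) - F(t)\log F(t) \le 2\tilde\sigma^2 t^2 F'(t)$, i.e.
$$(t - 2\tilde\sigma^2 t^2)\, F'(t) \le F(t)\log F(t),$$
which for $t \in (0, \frac{1}{2\tilde\sigma^2})$ rearranges to
$$\frac{F'(t)}{F(t)\log F(t)} \le \frac{1}{t - 2\tilde\sigma^2 t^2} = \frac{1}{t} + \frac{2\tilde\sigma^2}{1 - 2\tilde\sigma^2 t}.$$
The left-hand side is exactly $\frac{d}{dt}\log\log F(t)$ (note $F(t) > 1$ for $t>0$ unless $f\equiv 0$, in which case the claim is trivial), while the right-hand side integrates to $\log\frac{t}{1-2\tilde\sigma^2 t} + C$. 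Integrating from $0^+$ to $t$ and controlling the boundary behaviour via $\log\log F(s) - \log s \to \log(\int f^2 d\mu)$ as $s \downarrow 0$ (a Taylor expansion of $F(s) = 1 + s\int f^2 d\mu + o(s)$) produces
$$\log\log F(t) \le \log\Big(\frac{t}{1-2\tilde\sigma^2 t}\Big) + \log\!\Big(\int f^2 d\mu\Big),$$
which is precisely \eqref{B'} after exponentiating twice.

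I expect the main obstacle to be the rigorous justification of the boundary term as $t \downarrow 0$ and, more generally, the differentiability of $F$ and the finiteness of all integrals: one needs $f$ bounded (which is part of the hypothesis, and also follows from $|\mathfrak{d}|f|| \le 1$ by the remark after Remark~\ref{DiskrDiff}) so that $e^{tf^2}$ and $f^2 e^{tf^2}$ are integrable and differentiation under the integral sign is legitimate for $t$ in the relevant range. A minor technical point worth spelling out is the pointwise domination $|\mathfrak{d}^+_i h| \le |\mathfrak{d}_i h|$ for every $i$ and every $h \in L^2(\mu)$ (immediate from $(\cdot)_+^2 \le (\cdot)^2$), which is what lets us pass from the hypothesis $|\mathfrak{d}|f|| \le 1$ to the bound $|\mathfrak{d}^+|f|| \le 1$ needed to invoke \eqref{Kettenregelimitation}. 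Everything else is the routine Gronwall-type integration described above.
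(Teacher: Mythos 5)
Your proposal is correct and follows essentially the same route as the paper: both rest on the chain-rule surrogate \eqref{Kettenregelimitation} and reduce matters to the differential inequality $s(1-s)u'(s) \le u(s)\log u(s)$ for $u(s) = \int e^{sf^2/(2\tilde{\sigma}^2)}\,d\mu$, which is then integrated from $0$ using the limit $u(s)^{(1-s)/s} \to \exp(\frac{1}{2\tilde{\sigma}^2}\int f^2 d\mu)$. The only difference is presentational: you derive that inequality by applying the modified LSI directly to $tf^2$ together with the identity $\mathrm{Ent}(e^{tf^2}) = tF'(t) - F(t)\log F(t)$, whereas the paper first establishes the family of inequalities \eqref{B-G1-Ungl} via the variational formula for entropy and then differentiates at $p=1$; your monotone quantity $\log\log F(t) - \log\bigl(t/(1-2\tilde{\sigma}^2 t)\bigr)$ coincides, up to an additive constant, with the logarithm of the paper's non-increasing function $v(s)$.
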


\begin{proof}
	We adapt the arguments from \cite{B-G1}, p. 6 f. First, consider the inequality
	\begin{equation}
	\int e^f d\mu \le \left(\int e^{\lambda f + (1 - \lambda) \tilde{\sigma}^2 |\mathfrak{d}^+ f|^2/2} d\mu\right)^{1/\lambda}
	\label{B-G1-Ungl}
	\end{equation}
	for all bounded measurable functions $f \colon \Omega \to \mathbb{R}$ and all $\lambda \in (0,1]$. Here we have already plugged
	in $\mathfrak{d}^+$ as our choice of the difference operator.
	To deduce \eqref{B-G1-Ungl}, we use the well-known ``variational formula''
	$$\text{Ent}(g) = \sup\Big\{\int gh \, d\mu \colon h \colon \Omega \to \mathbb{R}\enskip \text{measurable s.\,th.}\enskip
	\int e^h d\mu \le 1 \Big\},$$
	which can be shown by Young's inequality in the form
	$uv \le u \log u - u + e^v$
	for all $u \ge 0$ and $v \in \mathbb{R}$, for instance. See \cite[Proposition 5.6]{L3} for details.
	If we set $g := e^f$ and $h := \lambda f + (1-\lambda) \tilde{\sigma}^2 |\mathfrak{d}^+ f|^2/2 - \beta$ with $\beta = \log \int
	e^{\lambda f + (1 - \lambda) \tilde{\sigma}^2 |\mathfrak{d}^+ f|^2/2} d\mu$, we have $\int e^h d\mu = 1$ and thus
	$$\int (\lambda f + (1 - \lambda) \tilde{\sigma}^2 |\mathfrak{d}^+ f|^2/2 - \beta) e^f d\mu \le \text{Ent}(e^f).$$
	Since $f$ satisfies the modified LSI \eqref{mLSI} with constant $\tilde{\sigma}^2$, it follows that
	\begin{align*}
	&\lambda \int f e^f d\mu + (1-\lambda) \text{Ent}(e^f) - \beta \int e^f d\mu \le \text{Ent}(e^f)\\ \Leftrightarrow\quad
	&\lambda \int e^f d\mu \log \int e^f d\mu - \beta \int e^f d\mu \le 0,
	\end{align*}
	from which we directly get \eqref{B-G1-Ungl}.
	
	We now apply \eqref{B-G1-Ungl} to the function $sf^2/(2 \tilde{\sigma}^2)$ with $0 < s < 1$ and $\lambda = (p-s)/(1-s)$ for
	any $p \in (s,1]$. Together with \eqref{Kettenregelimitation} (note that $|\mathfrak{d} |f|| \le 1$ implies $|\mathfrak{d}^+ |f||
	\le 1$), this gives
	$$\int e^{s f^2/(2 \tilde{\sigma}^2)} d\mu \le \Big(\int \exp\Big(\frac{psf^2}{2 \tilde{\sigma}^2}\Big) d\mu\Big)^{(1-s)/(p-s)}.$$
	For $p=1$ both sides are equal, and as for $p<1$ the upper inequality holds, we get that the logarithm of the left-hand side
	(considered as a function of $p$) must increase more rapidly at $p=1$ than that of the right-hand side. We thus consider
	the derivatives of the logarithms of both sides at $p=1$ and arrive at the inequality
	$$0 \ge \frac{1}{1-s} \bigg[(1-s) \int \frac{sf^2}{2 \tilde{\sigma}^2} e^{sf^2/(2 \tilde{\sigma}^2)} d\mu -
	\int e^{sf^2/(2 \tilde{\sigma}^2)} d\mu \log \int e^{sf^2/(2 \tilde{\sigma}^2)} d\mu \bigg].$$
	
	Now we set
	$u(s) := \int e^{sf^2/(2 \tilde{\sigma}^2)} d\mu$,
	$s \in (0,1]$. Then we get
	$$0 \ge \frac{1}{1-s}\left[s(1-s) u'(s) - u(s) \log u(s)\right]\quad \Leftrightarrow\quad 0 \ge \frac{1-s}{s} \frac{u'(s)}{u(s)} - \frac{1}{s^2} \log u(s).$$
	Hence, the function
	$v(s) := \exp(\frac{1-s}{s} \log u(s))$
	is non-increasing in $s$, and therefore we have $v(s) \le \lim_{s \downarrow 0} v(s) =: v(0^+)$ for all $s \in (0, 1]$.
	
	Note that
	\begin{align*}
	v(0^+) = \lim_{s \downarrow 0} \Big(u(s)^{(1-s)/s}\Big)
	= \lim_{s \downarrow 0} \Big(\int e^{sf^2/(2 \tilde{\sigma}^2)} d\mu\Big)^{(1-s)/s} = \exp \Big(\frac{1}{2 \tilde{\sigma}^2} \int f^2 d\mu\Big).
	\end{align*}
	Thus, for all $s \in (0,1]$ we have
	\begin{align*}
	&\exp\Big(\frac{1-s}{s} \log u(s) \Big) \le \exp\Big(\frac{1}{2 \tilde{\sigma}^2} \int f^2 d\mu\Big)\\
	\Leftrightarrow \quad
	&\int e^{sf^2/(2 \tilde{\sigma}^2)} d\mu \le \exp \Big(\frac{1}{2 \tilde{\sigma}^2}\frac{s}{1-s} \int f^2 d\mu \Big).
	\end{align*}
	Setting $t = s/(2 \tilde{\sigma}^2)$ completes the proof.
\end{proof}

Combining inequalities \eqref{A'} and \eqref{B'}, we now get the following result.

\begin{proposition}
\label{ExponentielleUngl}
Let $\mu$ be a probability measure on some measurable space $(\Omega, \mathcal{A})$ which satisfies the modified LSI \eqref{mLSI} with
Sobolev constant
$\sigma^2 > 0$ with respect to $\mathfrak{d}$ and which moreover satisfies the modified LSI \eqref{mLSI}
with Sobolev constant $\tilde{\sigma}^2$ with respect
to $\mathfrak{d}^+$. Furthermore, let $f \colon \Omega \to \mathbb{R}$ be a bounded measurable function such that $\int
f d\mu = 0$ and $|\mathfrak{d}|\mathfrak{d} f|| \le 1$. Then, we have
\begin{equation}
\label{expUnglFormel}
\int \exp \left(\frac{1}{2 \sigma \tilde{\sigma}} f\right) d\mu \le \exp\left(\frac{1}{2 \tilde{\sigma}^2} \int |\mathfrak{d} f|^2
d\mu \right).
\end{equation}
\end{proposition}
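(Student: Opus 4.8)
The plan is to combine the two ingredients exactly in the order suggested by the statement: first apply the exponential bound \eqref{A'} with the function $f$ replaced by a scalar multiple of $f$, then use Lemma \ref{ExpUnglVorstufe} to control the resulting $|\mathfrak{d} f|^2$-term. Concretely, apply \eqref{A'} to the function $\lambda f$ for a parameter $\lambda > 0$ to be chosen (note $\int \lambda f \, d\mu = 0$ and $\mathfrak{d}(\lambda f) = \lambda \mathfrak{d} f$ by Condition \ref{DiskrDiffallgem}(ii)), which gives
$$\int e^{\lambda f}\, d\mu \le \int e^{\sigma^2 \lambda^2 |\mathfrak{d} f|^2}\, d\mu.$$
Now I want to bound the right-hand side via Lemma \ref{ExpUnglVorstufe}, applied to the function $g := |\mathfrak{d} f|$ in place of $f$ there. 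The hypothesis $|\mathfrak{d}|\mathfrak{d} f|| \le 1$ is precisely the condition $|\mathfrak{d}|g|| \le 1$ needed in that lemma (here $g \ge 0$ so $|g| = g$), and $g$ is bounded because $f$ is bounded (using the remark after Remark \ref{DiskrDiff} that $f$ bounded implies $|\mathfrak{d} f|$ bounded). Applying \eqref{B'} with $t = \sigma^2 \lambda^2$ yields
$$\int e^{\sigma^2 \lambda^2 |\mathfrak{d} f|^2}\, d\mu \le \exp\Big(\frac{\sigma^2\lambda^2}{1 - 2\tilde\sigma^2 \sigma^2 \lambda^2} \int |\mathfrak{d} f|^2\, d\mu\Big),$$
valid as long as $\sigma^2\lambda^2 < 1/(2\tilde\sigma^2)$.

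The remaining step is to choose $\lambda$ so that the prefactor $\sigma^2\lambda^2/(1 - 2\tilde\sigma^2\sigma^2\lambda^2)$ collapses to the clean value $1/(2\tilde\sigma^2)$ claimed in \eqref{expUnglFormel}, and simultaneously so that $\lambda = 1/(2\sigma\tilde\sigma)$. A direct check: with $\lambda = 1/(2\sigma\tilde\sigma)$ one has $\sigma^2\lambda^2 = 1/(4\tilde\sigma^2)$, which indeed satisfies the constraint $\sigma^2\lambda^2 < 1/(2\tilde\sigma^2)$; and then $1 - 2\tilde\sigma^2\sigma^2\lambda^2 = 1 - 1/2 = 1/2$, so the prefactor equals $(1/(4\tilde\sigma^2))/(1/2) = 1/(2\tilde\sigma^2)$, exactly as desired. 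Substituting $\lambda = 1/(2\sigma\tilde\sigma)$ into the chain of inequalities gives
$$\int \exp\Big(\frac{1}{2\sigma\tilde\sigma} f\Big)\, d\mu \le \exp\Big(\frac{1}{2\tilde\sigma^2}\int |\mathfrak{d} f|^2\, d\mu\Big),$$
which is \eqref{expUnglFormel}.

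I do not anticipate a serious obstacle here: the two analytic inputs \eqref{A'} and Lemma \ref{ExpUnglVorstufe} are already in hand, and the proof is essentially bookkeeping with the scaling parameter. The only points requiring a word of care are (i) verifying that $g = |\mathfrak{d} f|$ satisfies the hypotheses of Lemma \ref{ExpUnglVorstufe} — boundedness and $|\mathfrak{d}|g|| \le 1$ — which follow from the stated hypotheses and the boundedness remark in Section 2, and (ii) checking the range condition $t < 1/(2\tilde\sigma^2)$ is met by the chosen $\lambda$, which it is with room to spare. If one prefers, the choice of $\lambda$ can be left free until the end and then optimized, but the value $\lambda = 1/(2\sigma\tilde\sigma)$ is exactly what produces the stated constants, so I would simply plug it in.
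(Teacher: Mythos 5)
Your proposal is correct and follows essentially the same route as the paper: apply \eqref{A'} to $\lambda f$, apply Lemma \ref{ExpUnglVorstufe} with $t=\lambda^2\sigma^2$ to $|\mathfrak{d}f|$, and set $\lambda=\frac{1}{2\sigma\tilde{\sigma}}$, which makes the prefactor collapse to $\frac{1}{2\tilde{\sigma}^2}$ exactly as you computed. The verification of the hypotheses of the lemma for $g=|\mathfrak{d}f|$ and of the range condition $t<\frac{1}{2\tilde{\sigma}^2}$ is also as in the paper.
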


\begin{proof}
First applying \eqref{A'} to $\lambda f$ and then \eqref{B'} with $t = \lambda^2 \sigma^2$ for any $\lambda \in [0,\frac{1}{\sqrt{2} \sigma \tilde{\sigma}})$ and with
$f$ replaced by $|\mathfrak{d} f|$ leads to
$$\int e^{\lambda f} d\mu \leq \int e^{\lambda^2 \sigma^2 |\mathfrak{d} f|^2} d\mu \le \exp \Big(\frac{\lambda^2 \sigma^2}{1 - 2 \sigma^2 \tilde{\sigma}^2 \lambda^2} \int |\mathfrak{d} f|^2
d\mu\Big).$$
Setting $\lambda = \frac{1}{2 \sigma \tilde{\sigma}}$ completes the proof.
\end{proof}

\section{Relating First and Second Order Difference Operators}

In order to remove the first order difference operator on the right-hand side of \eqref{expUnglFormel},
we may now study relations of the form
$\gamma \int |\mathfrak{d} f|^2 d \mu \le \int \lVert \mathfrak{d}^{(2)} f \rVert_{\text{HS}}^2 d \mu$
for some constant $\gamma > 0$. Note that due to \eqref{L^2Normengleich}, we may replace $\mathfrak{d}^{(2)} f$ by ``Hoeffding'' differences $\mathfrak{D}^{(2)} f$ on the right-hand side, which enables us to make use of the ``harmonic analysis'' arguments established in Section 2.
Indeed, one of our main tools is the following
lemma about partial integration and self-adjointness
for difference operators and the discrete Laplacian $\mathfrak{L}$ defined on functions of independent random variables.

\begin{lemma}
	\label{selbstadj}
	Let $(\Omega_i, \mathcal{A}_i, \mu_i)$ be probability spaces, and denote by $(\Omega, \mathcal{A}, \mu) := \otimes_{i=1}^n
	(\Omega_i,\linebreak[2] \mathcal{A}_i, \mu_i)$ their product. Let $\mathfrak{D} = (\mathfrak{D}_i)_i$ be the difference operator from \eqref{DiskrDiffFormel}, and let $\mathfrak{L}$ be the Laplacian as in \eqref{Laplace}. Then, for any $f, g \in L^2(\mu)$ we have:
	\begin{enumerate}
		\item $$\int (\mathfrak{D}_i f)g d\mu = \int f(\mathfrak{D}_i g) d\mu = \int (\mathfrak{D}_i f)(\mathfrak{D}_i g) d\mu.$$
		\item $$\int (\mathfrak{D}f)g d\mu = \int f(\mathfrak{D}g) d\mu,$$
		where $\mathfrak{D}$ the integral has to be understood componentwise.
		\item $$\int (\mathfrak{L} f) g d\mu = \int f (\mathfrak{L} g) d \mu = \sum_{i \ne j} \int (\mathfrak{D}_{ij}f)(\mathfrak{D}_{ij}g) d\mu.$$
	\end{enumerate}
\end{lemma}

\begin{proof}
	The proof is elementary. Note that in order to prove (2) and (3), we only need to check (1). Part 1 in turn follows from the fact that
	by Fubini's theorem, we have
	$$\int g \Big(\int f d\mu_i \Big) d\mu = \int \Big(\int f d\mu_i\Big) \Big( \int g d\mu_i\Big) d\mu = \int f \Big( \int g d\mu_i\Big) d\mu.$$
	For (3), note that we always have $\mathfrak{D}_{ij} f = \mathfrak{D}_{ji} f$ for any $i, j$ by \eqref{DiskrDiffFormel} and Fubini's theorem.
\end{proof}

Using this result, we can prove an inequality of the desired type:

\begin{proposition}
	\label{Gradient-Hesse}
	Let $(\Omega_i, \mathcal{A}_i, \mu_i)$ be probability spaces, and denote by $(\Omega, \mathcal{A}, \mu) := \otimes_{i=1}^n
	(\Omega_i, \mathcal{A}_i, \mu_i)$ their product.
	Let $f \in L^2(\mu)$ be a function such that its Hoeff\-ding decomposition with respect to $\mu$ is given by
	$f = \sum_{k=d}^{n} f_k$
	for some $d \ge 2$. Then, we have
	$$\int |\mathfrak{d} f|^2 d\mu \le \frac{1}{d-1} \int \lVert \mathfrak{d}^{(2)} f \rVert_{\text{\emph{HS}}}^2 d\mu.$$
	Equality holds if $f = f_d$, i.\,e. the Hoeffding decomposition of $f$ consists of a single term only.
	Here, $\lVert \cdot \rVert_{\text{\emph{HS}}}$ denotes the Hilbert Schmidt norm of a matrix.
\end{proposition}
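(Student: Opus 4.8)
The plan is to expand both sides in terms of the Hoeffding decomposition $f = \sum_{k=d}^n f_k$, using the orthogonality of the $f_k$ in $L^2(\mu)$ together with the self-adjointness and eigenfunction properties established in Lemma~\ref{selbstadj} and Theorem~\ref{diagonal}. First I would compute the right-hand side. Writing $\lVert \mathfrak{D}^{(2)} f \rVert_{\text{HS}}^2 = \sum_{i \neq j} (\mathfrak{D}_{ij} f)^2$ and integrating, part~(3) of Lemma~\ref{selbstadj} gives
$$\int \lVert \mathfrak{D}^{(2)} f \rVert_{\text{HS}}^2 d\mu = \sum_{i \neq j} \int (\mathfrak{D}_{ij} f)(\mathfrak{D}_{ij} f) d\mu = \int (\mathfrak{L} f) f \, d\mu.$$
By Theorem~\ref{diagonal}, $\mathfrak{L} f_k = (k)_2 f_k = k(k-1) f_k$, so by orthogonality of the Hoeffding terms,
$$\int (\mathfrak{L} f) f \, d\mu = \sum_{k=d}^n k(k-1) \int f_k^2 \, d\mu = \sum_{k=d}^n k(k-1) \, \lVert f_k \rVert_2^2.$$

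Next I would handle the left-hand side. Using the representation \eqref{nabla2}, for each $i$ we have $2(\mathfrak{d}_i f)^2 = (\mathfrak{D}_i f)^2 + \mathbb{E}_i (\mathfrak{D}_i f)^2$, hence
$$2 \int |\mathfrak{d} f|^2 d\mu = \sum_{i=1}^n \int (\mathfrak{D}_i f)^2 d\mu + \sum_{i=1}^n \int \mathbb{E}_i (\mathfrak{D}_i f)^2 d\mu = 2 \sum_{i=1}^n \int (\mathfrak{D}_i f)^2 d\mu,$$
since $\mathbb{E}_i$ is an expectation operator that preserves the integral against $\mu$. Therefore $\int |\mathfrak{d} f|^2 d\mu = \sum_i \int (\mathfrak{D}_i f)^2 d\mu = \int \bigl(\sum_i \mathfrak{D}_i\bigr) f \cdot f \, d\mu$ after using part~(1) of Lemma~\ref{selbstadj} componentwise — but more directly, $\sum_i \int (\mathfrak{D}_i f)^2 d\mu = \int (\mathfrak{L}_1 f) f \, d\mu$ with $\mathfrak{L}_1 = \sum_i \mathfrak{D}_i$, and $\mathfrak{L}_1 f_k = k f_k$ (the degree-$k$ case of the combinatorial count in Theorem~\ref{diagonal}, since exactly $k$ of the indices $i$ hit each term $h_{i_1\ldots i_k}$). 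Thus, again by orthogonality,
$$\int |\mathfrak{d} f|^2 d\mu = \sum_{k=d}^n k \, \lVert f_k \rVert_2^2.$$

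The proposition then reduces to the elementary termwise inequality $k \le \frac{1}{d-1} k(k-1)$ for every $k \ge d$, i.e.\ $k - 1 \ge d - 1$, which holds trivially since $k \ge d$, with equality exactly when $k = d$. Summing against the nonnegative weights $\lVert f_k \rVert_2^2$ gives $\sum_k k \lVert f_k \rVert_2^2 \le \frac{1}{d-1} \sum_k k(k-1) \lVert f_k \rVert_2^2$, which is the claim; and equality forces all terms with $k > d$ to vanish, i.e.\ $f = f_d$. The only point requiring care — and the main (minor) obstacle — is justifying the spectral identities $\int (\mathfrak{L}_1 f) f \, d\mu = \sum_k k \lVert f_k\rVert_2^2$ and $\int (\mathfrak{L} f) f \, d\mu = \sum_k k(k-1) \lVert f_k \rVert_2^2$ cleanly: one should verify $\mathfrak{L}_1$ is self-adjoint (immediate from part~(1) of Lemma~\ref{selbstadj} applied coordinatewise) and establish $\mathfrak{L}_1 f_k = k f_k$ by the same counting argument as in the proof of Theorem~\ref{diagonal} (each $h_{i_1\ldots i_k}$ survives $\mathfrak{D}_i$ precisely when $i \in \{i_1,\ldots,i_k\}$, giving $k$ copies). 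Everything else is orthogonality and a one-line scalar inequality.
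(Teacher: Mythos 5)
Your proof is correct and follows essentially the same route as the paper: both sides are diagonalized over the Hoeffding terms, giving $\sum_k k\lVert f_k\rVert_2^2$ versus $\sum_k k(k-1)\lVert f_k\rVert_2^2$, and the claim reduces to the termwise comparison $k \le k(k-1)/(d-1)$ for $k \ge d$. The only (cosmetic) difference is that you obtain $\int |\mathfrak{d} f|^2 d\mu = \sum_k k \lVert f_k\rVert_2^2$ via the identity $\int (\mathfrak{d}_i f)^2 d\mu = \int (\mathfrak{D}_i f)^2 d\mu$ and the eigenrelation $\mathfrak{L}_1 f_k = k f_k$, which is a slightly cleaner packaging of the paper's explicit kernel-orthogonality computation with independent copies.
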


\begin{proof}
	First, let $f = f_k$. Then, applying Lemma \ref{selbstadj}(3) leads to
	$$\int \lVert \mathfrak{D}^{(2)} f_k \rVert_{\text{HS}}^2 d\mu = \sum_{i \neq j} \int (\mathfrak{D}_{ij} f_k)(\mathfrak{D}_{ij} f_k) d\mu
	= \int f_k \mathfrak{L} f_k d\mu.$$
	Moreover, Theorem \ref{diagonal} yields $\mathfrak{L} f_k = (k)_2 f_k$. Together with \eqref{L^2Normengleich}, this yields
	$$\int \lVert \mathfrak{D}^{(2)} f_k \rVert_{\text{HS}}^2 d\mu = (k)_2 \int f_k^2 d\mu. \qquad (*)$$
	
	On the other hand, if $X_1, \ldots, X_n$ is a
	sequence of independent random variables with distributions $\mu_i$, $i = 1, \ldots, n$, we have $f_k(X_1, \ldots, X_n) =
	\sum_{i_1< \ldots < i_k} h_{i_1 \ldots i_k}(X_{i_1}, \ldots, X_{i_k})$, where the summands on the right-hand side are pairwise orthogonal
	in $L^2$. Here we used the notation of the proof of Theorem \ref{diagonal}.
	
	Now let $\bar{X}_1, \ldots, \bar{X}_n$ be a sequence of independent copies of the random variables $X_1, \ldots, X_n$, and additionally consider
	the functions $T_{i_j}h_{i_1 \ldots i_k}(X_{i_1}, \ldots, X_{i_d}) = h_{i_1 \ldots i_k}(X_{i_1},\linebreak[2] \ldots, \bar{X}_{i_j}, \ldots, X_{i_k})$ (cf.
	Example \ref{DiskrDiff}(3)). Then,
	$$\bigcup_{i_1 < \ldots < i_k} \{h_{i_1 \ldots i_k}(X_{i_1}, \ldots, X_{i_k}) \} \cup \{T_{i_j}h_{i_1 \ldots i_k}(X_{i_1}, \ldots, X_{i_k}), j = 1, \ldots, k \}$$
	is still a (larger) family of pairwise orthogonal functions in $L^2$, now integrating with respect to the $X_i$ and the $\bar{X}_i$.
	
	Similarly to the deduction of \eqref{D_i f_d}, we therefore get
	\begin{align*}
	&(\mathfrak{d}_if_k(X_1, \ldots, X_n))^2 = \frac{1}{2} \bar{\mathbb{E}}_i (f_k - T_if_k)^2\\
	= &\frac{1}{2} \bar{\mathbb{E}}_i \Big(\sum_{\substack{i_1< \ldots < i_k \\ i \in \{i_1, \ldots, i_k\}}}
	(h_{i_1 \ldots i_k}(X_{i_1}, \ldots, X_{i_k}) - T_ih_{i_1 \ldots i_k}(X_{i_1}, \ldots, X_{i_k}))\Big)^2.
	\end{align*}
	Using orthogonality, it follows that
	\begin{align*}
	&\mathbb{E}(\mathfrak{d}_if_k(X_1, \ldots, X_n))^2\\
	= &\sum_{\substack{i_1< \ldots < i_k \\ i \in \{i_1, \ldots, i_k\}}} \frac{1}{2}\Big(\mathbb{E}\bar{\mathbb{E}}_i(h_{i_1 \ldots i_k}^2
	(X_{i_1}, \ldots, X_{i_k}) + T_ih_{i_1 \ldots i_k}^2(X_{i_1}, \ldots, X_{i_k}))\Big)\\
	= &\sum_{\substack{i_1< \ldots < i_k \\ i \in \{i_1, \ldots, i_k\}}} \mathbb{E}h_{i_1 \ldots i_k}^2(X_{i_1}, \ldots, X_{i_k}).
	\end{align*}
	
	As in the proof of Theorem \ref{diagonal}, it remains to check how often each term $\mathbb{E} h_{i_1 \ldots i_k}^2
	(X_{i_1},\linebreak[2] \ldots, X_{i_k})$ appears in $\mathbb{E}|\mathfrak{d} f_k|^2 = \sum_i \mathbb{E} (\mathfrak{d}_i f_k)^2$.
	However, it is clear that
	each $i \in \{i_1, \ldots, i_k\}$ replicates the summand $\mathbb{E} h_{i_1 \ldots i_k}(X_{i_1}, \ldots, X_{i_k})$ exactly once.
	Consequently, it follows that $\mathbb{E}|\mathfrak{d} f_k|^2 = k \mathbb{E}f_k^2$, or
	$$\int |\mathfrak{d} f_k|^2 d\mu = k \int f_k^2 d\mu. \qquad (**)$$
	Comparing $(*)$ and $(**)$ completes the proof in case of $f = f_k$.
	
	For functions with arbitrary Hoeffding expansion we shall use the orthogonality of the terms of the Hoeffding decomposition to get
	$$\int |\mathfrak{d} f|^2 d\mu = \sum_{k=d}^n \frac{1}{k-1} \int \lVert \mathfrak{D}^{(2)} f_k \rVert_{\text{HS}}^2 d\mu
	\le \frac{1}{d-1} \int \lVert \mathfrak{D}^{(2)} f \rVert_{\text{HS}}^2 d\mu.$$
	In view of \eqref{L^2Normengleich}, this finally completes the proof.
\end{proof}


We are now ready to prove Theorem \ref{zentral}. In fact, using the results established in Sections 2--4, we may easily obtain some complementary results which can be shown along the lines of the proof of Theorem \ref{zentral}. For instance, we have the following slight sharpening of Theorem \ref{zentral} if all the measures $\mu_i$ are Bernoulli measures:

\begin{proposition}
\label{zentralfuerBernoulli}
Using the notations of Theorem \ref{zentral}, let all the $\mu_i$ be of the form $\mu_i = p_i\delta_{a_i} + (1-p_i)\delta_{b_i}$,
where $a_i, b_i \in \mathbb{R}$, $p_i \in (0,1)$ for all $i$, and $\delta_x$ denotes the Dirac measure at $x \in \mathbb{R}$.
Then, assuming the conditions of Theorem \ref{zentral}, we have
$$\int \exp \left(\frac{1}{3 + 2b^2} |f|\right) d\mu \le 2.$$
\end{proposition}

We now prove give a joint proof of Theorem \ref{zentral} and Proposition \ref{zentralfuerBernoulli}.

\begin{proof}[Proof of Theorem \ref{zentral} and Proposition \ref{zentralfuerBernoulli}] First, combining Proposition
\ref{ExponentielleUngl}, Proposition \ref{Gradient-Hesse} with $d=2$ and the assumptions from Theorem \ref{zentral} leads to
\begin{equation}
\label{BeginnBew}
\int \exp \Big(\frac{1}{2 \sigma \tilde{\sigma}} f \Big) d\mu \le \exp \Big(\frac{1}{2 \tilde{\sigma}^2} \int
\lVert \mathfrak{d}^{(2)} f \rVert_{\text{HS}}^2 d\mu \Big) \le \exp \Big(\frac{b^2}{2 \tilde{\sigma}^2} \Big)
\end{equation}
if $\mu$ satisfies the modified LSI \eqref{mLSI} with constant $\sigma^2 > 0$ with respect to $\mathfrak{d}$ and furthermore
with constant $\tilde{\sigma}^2 > 0$ with respect to $\mathfrak{d}^+$.
Now, from \eqref{BeginnBew} we get
\begin{equation*}
\int \exp \Big(\frac{1}{2 \sigma \tilde{\sigma}} |f|\Big) d\mu
\le \int \Big(\exp \Big(\frac{1}{2 \sigma \tilde{\sigma}}f\Big) + \exp \Big(\frac{1}{2 \sigma \tilde{\sigma}}(-f)\Big)\Big) d\mu
\le 2 \exp \Big(\frac{b^2}{2 \tilde{\sigma}^2} \Big).
\end{equation*}
Thus, by applying H\"{o}lder's inequality we obtain
$$\int \exp \Big(\frac{1}{2 \sigma \tilde{\sigma} \kappa} |f|\Big) d\mu \le
\Big(\int \exp \Big(\frac{1}{2 \sigma \tilde{\sigma}} |f|\Big) d\mu \Big)^{1/\kappa}
\le \Big(2 \exp \Big(\frac{b^2}{2 \tilde{\sigma}^2} \Big)\Big)^{1/\kappa}$$
for all $\kappa \ge 1$. The last term is bounded by $2$ if
$\kappa \ge (\log 2 + b^2/(2 \tilde{\sigma}^2))/\log 2$,
or equivalently
$1/(2 \sigma \tilde{\sigma} \kappa) \le \log 2/(2 \sigma \tilde{\sigma} \log 2 + \sigma \tilde{\sigma}^{-1} b^2)$.

By Proposition \ref{DLSUfueralle}, Proposition \ref{DLSUfuerBernoulli}, Lemma \ref{DLSU-Prod} and Proposition \ref{DLSUnablaplus},
we can set $\sigma^2 = 2$ and $\tilde{\sigma}^2 = 4$ or, in the Bernoulli case, $\sigma^2 = 1$ and $\tilde{\sigma}^2 = 2$. We thus choose
\begin{equation}
\label{R1}
\int \exp \Big(\frac{\log 2}{\sqrt{32} \log 2 + \frac{1}{\sqrt{2}} b^2} |f| \Big) d\mu \le 2,\qquad
\int \exp \Big(\frac{\log 2}{\sqrt{8} \log 2 + \frac{1}{\sqrt{2}} b^2} |f| \Big) d\mu \le 2
\end{equation}
for $\sigma^2 = 2$ and $\tilde{\sigma}^2 = 4$ or $\sigma^2 = 1$ and $\tilde{\sigma}^2 = 2$, respectively. The proof of completed by noting that for all $x \ge 0$,
\begin{equation*}
\frac{\log 2}{\sqrt{32} \log 2 + \frac{1}{\sqrt{2}} x} \ge \frac{1}{6 + 2x}\qquad \text{and}\qquad
\frac{\log 2}{\sqrt{8} \log 2 + \frac{1}{\sqrt{2}} x} \ge \frac{1}{3 + 2x}.
\end{equation*}
\end{proof}

Moreover, it is straightforward to reformulate Theorem \ref{zentral} using $\mathfrak{d}^+$ instead of $\mathfrak{d}$. The proof is easily obtained by simple modifications of the above arguments:

\begin{proposition}
\label{zentral+}
Using the notations of Theorem \ref{zentral}, we require that
$|\mathfrak{d}^+|\mathfrak{d}^+ f|| \le 1$,
where $\mathfrak{d}^+$ is the difference operator from \eqref{DiskrDiff3}.
Then, we have
$$\int \exp \left(\frac{1}{2(4 + b^2)} |f|\right) d\mu \le 2.$$
\end{proposition}

\begin{proof}
Note that we can use \eqref{A'} with $\mathfrak{d}$ replaced by $\mathfrak{d}^+$, that is
$\int e^f d\mu \leq \int e^{\tilde{\sigma}^2 |\mathfrak{d}^+ f|^2} d\mu$
for any bounded measurable function $f \colon \Omega \to \mathbb{R}$ with $\int f d\mu = 0$. Proceeding as in Section 4 then
leads to the inequality
$$\int \exp \Big(\frac{1}{2 \tilde{\sigma}^2} f\Big) d\mu \le \exp\Big(\frac{1}{2 \tilde{\sigma}^2} \int |\mathfrak{d}^+ f|^2
d\mu \Big)$$
if $f \colon \Omega \to \mathbb{R}$ is any bounded measurable function such that $\int f d\mu = 0$ and $|\mathfrak{d}^+|\mathfrak{d}^+ f|| \le 1$.
Since
$\int |\mathfrak{d}^+ f|^2 d\mu \le \int |\mathfrak{d} f|^2 d\mu$,
we can now use Proposition \ref{Gradient-Hesse} as well.
The remaining part of the proof is similar to the proof of Theorem \ref{zentral}. Thus we finally arrive at the inequality
$1/(2 \tilde{\sigma}^2 \kappa) \le \log 2/(2 \tilde{\sigma}^2 \log 2 + b^2)$.
Plugging in $\tilde{\sigma}^2 = 4$ and noting that
$\log 2/(8 \log 2 + x) \ge 1/(8 + 2x)$
for all $x \ge 0$ completes the proof.
\end{proof}

Finally, we prove Theorem \ref{mit Hoeffding1}.

\begin{proof}[Proof of Theorem \ref{mit Hoeffding1}]
	The basic argument is as follows: if we have two functions $\varphi_1$ and $\varphi_2$ on $\mathbb{R}^n$ both satisfying
	$\int e^{c_i |\varphi_i|} d\mu \le 2$
	for some constants $c_i > 0$, $i = 1, 2$, it follows that
	\begin{align}
	\label{Argument}
	\begin{split}
	\int e^{\min(c_1, c_2) |\varphi_1 + \varphi_2|/2} d\mu
	&\le \int e^{c_1 |\varphi_1|/2} e^{c_2 |\varphi_2|/2} d\mu\\
	&\le \Big(\int e^{c_1 |\varphi_1|} d\mu\Big)^{1/2} \Big(\int e^{c_2 |\varphi_2|} d\mu\Big)^{1/2}
	\le 2
	\end{split}
	\end{align}
	due to the Cauchy--Schwarz inequality. In our situation, we set $\varphi_1 = f_1$ and $\varphi_2 = Rf$.
	
	The bound for $Rf$ is obvious by Theorem \ref{zentral} and the fact that $\mathfrak{D}_{ij} R f = \mathfrak{D}_{ij} f$
	for all $i \neq j$ in view of \eqref{D_ij f_d}. This leads to
	$c_2 = 1/(6 + 2 b^2)$.
	It remains to bound $f_1$. Here, inequality \eqref{A'} yields
	$$\int e^{\lambda f_1} d\mu \leq \int e^{\sigma^2 \lambda^2 |\mathfrak{d} f_1|^2} d\mu \le e^{\sigma^2 \lambda^2 b_0^2}$$
	for any $\lambda > 0$, thus
	$\int e^{\lambda |f_1|} d\mu \le 2e^{\sigma^2 \lambda^2 b_0^2}$.
	As in the proof of Theorem \ref{zentral}, it follows that
	$$\int e^{\lambda |f_1|/\kappa} d\mu \le \Big(2e^{\sigma^2 \lambda^2 b_0^2}\Big)^{1/\kappa}$$
	for all $\kappa \ge 1$. The right-hand side is bounded by $2$ if
	$\lambda/\kappa \le \lambda \log 2/(\log 2 + \lambda^2 \sigma^2 b_0^2)$.
	Here, the expression on the
	right-hand side attains a maximum at $\lambda = (\log 2)^{1/2}/(\sigma b_0)$ whose value is $(\log 2)^{1/2}/(2 \sigma b_0)$.
	Plugging in $\sigma^2 = 2$, we get
	$c_1/2 = (\log 2)^{1/2}/(4 \sqrt{2} b_0) \ge 1/(7 b_0)$,
	and hence we can estimate $\min (c_1, c_2)/2$ as stated in Theorem \ref{mit Hoeffding1}.
\end{proof}


\section{Evaluating Second Order Difference Operators}

In Theorem \ref{zentral}, checking the condition $\int \lVert \mathfrak{d}^{(2)} f \rVert_{\text{HS}}^2 d\mu \le b^2$ is typically straightforward,
once we know the Hoeffding decomposition of $f$ (cf. \eqref{L^2Normengleich}, enabling us to use the ``Hoeffding'' differences $\mathfrak{D}$).
In contrast, evaluating the condition $|\mathfrak{d}|\mathfrak{d} f|| \le 1$ tends to be more
involved. Therefore, we shall provide a reformulated version of Theorem \ref{zentral}
with conditions which are easier to apply.

\begin{theorem}
\label{einfachereBed}
Let $(\Omega_i, \mathcal{A}_i, \mu_i)$ be probability spaces, and denote by $(\Omega, \mathcal{A}, \mu) := \otimes_{i=1}^n
(\Omega_i,\linebreak[2] \mathcal{A}_i, \mu_i)$ their product.
Moreover, let $f \colon \Omega \to \mathbb{R}$ be a bounded measurable function so that its Hoeffding decomposition
with respect to $\mu$ is given by
$f = \sum_{k=2}^{n} f_k$.
Assume that the conditions
\begin{equation}
\label{Cond1}
\lVert \mathfrak{d}^{(2)} f \rVert_{\text{\emph{HS}}} \le B_1\qquad \text{and}\qquad
\max_{i = 1, \ldots, n}|\mathfrak{d}_i f| \le B_2
\end{equation}
are satisfied for some $B_1, B_2 \ge 0$, where
$\lVert \mathfrak{d}^{(2)} f \rVert_{\text{\emph{HS}}}$ denotes the Hilbert--Schmidt norm of $\mathfrak{d}^{(2)} f$.
Then, we have
$$\int \exp \Big(\frac{c}{B_1 + B_2} |f|\Big) d\mu \le 2$$
for some numerical constant $c > 0$. A possible choice is $c = 1/11$. If all the underlying measures $\mu_i$ are two-point
measures, we can take $c = 1/7$.
\end{theorem}

\begin{proof}
For a set of independent random variables $X_1, \ldots, X_n$ with distributions $\mu_i$, write
\begin{equation}
\label{E5}
|\mathfrak{d}|\mathfrak{d} f(X)|| = \Big(\sum_{i=1}^{n}\frac{1}{2} \bar{\mathbb{E}}_i (|\mathfrak{d} f(X)| - |T_i \mathfrak{d} f(X)|)^2\Big)^{1/2}
\end{equation}
with $X = (X_1, \ldots, X_n)$ and $T_k$ as in Remark \ref{DiskrDiff}(3).
Without loss of generality, we may assume that $|\mathfrak{d} f| \ne 0$. To simplify notation, we introduce the convention that
$\sum^{(j)}$ means summation extending over all indexes but $j$. Similarly, $\sum^{(j,k)}$ denotes summation over all
indexes but $j$ and $k$.
Now, setting $a := \sum_{j=1}^{n}{}^{(i)} (\mathfrak{d}_j f)^2$, $b := (\mathfrak{d}_i f)^2$, $c := \sum_{j=1}^{n}{}^{(i)}
(T_i \mathfrak{d}_j f)^2$ and
$d := (T_i \mathfrak{d}_i f)^2$ for any $1 \le i \le n$, we arrive at
\begin{align}
\label{E0}
\begin{split}
(|\mathfrak{d} f| - |T_i \mathfrak{d} f|)^2 &= (\sqrt{a+b} - \sqrt{c+d})^2 = \Big(\frac{a + b - c - d}{\sqrt{a+b} + \sqrt{c+d}}\Big)^2\\ &\le \Big(|\sqrt{a} - \sqrt{c}| + \frac{|b-d|}{\sqrt{a+b}}\Big)^2
\le 2 \Big((\sqrt{a} - \sqrt{c})^2 + \frac{(b-d)^2}{a+b}\Big).
\end{split}
\end{align}
(Using the simpler estimate $|\sqrt{a+b} - \sqrt{c+d}| \le |\sqrt{a} - \sqrt{c}| + |\sqrt{b} - \sqrt{d}|$ instead would essentially lead to a
condition on first order differences only.) Moreover,
\begin{align}
\label{E4}
&(\sqrt{a} - \sqrt{c})^2 = \Big(\big(\sum_{j=1}^{n}{}^{(i)} (\mathfrak{d}_j f)^2\big)^{1/2} - \big(\sum_{j=1}^{n}{}^{(i)}
(T_i \mathfrak{d}_j f)^2\big)^{1/2} \Big)^2\notag\\
\le \; &\sum_{j=1}^{n}{}^{(i)} (\mathfrak{d}_j f - T_i \mathfrak{d}_j f)^2
= \frac{1}{2} \sum_{j=1}^{n}{}^{(i)} \big((\bar{\mathbb{E}}_j (f-T_jf)^2)^{1/2} - (\bar{\mathbb{E}}_j (T_if-T_{ij}f)^2)^{1/2}\big)^2\notag\\
\le \; &\frac{1}{2} \sum_{j=1}^{n}{}^{(i)} \bar{\mathbb{E}}_j (f- T_jf - T_if + T_{ij}f)^2
\end{align}
Combining \eqref{E5}, \eqref{E0} and \eqref{E4} together with the trivial estimate $\sqrt{x+y} \le \sqrt{x} + \sqrt{y}$ for all $x, y \ge 0$
then yields
\begin{equation}
\label{E1}
|\mathfrak{d}|\mathfrak{d} f(X)|| \le \sqrt{2} \Big( \lVert \mathfrak{d}^{(2)} f(X) \rVert_{\text{HS}} + \Big(\frac{1}{2}
\sum_{i=1}^{n} \bar{\mathbb{E}}_{i} \frac{((\mathfrak{d}_i f(X))^2 - (T_i \mathfrak{d}_i f(X))^2)^2}{|\mathfrak{d} f(X)|^2}\Big)^{1/2} \Big).
\end{equation}
We may further estimate the last term by
\begin{equation}
\label{E2}
\Big(\sum_{i=1}^{n} \bar{\mathbb{E}}_{i} \frac{|(\mathfrak{d}_i f(X))^2 - (T_i \mathfrak{d}_i f(X))^2|}{|\mathfrak{d} f(X)|^2}\Big)^{1/2}
\sup_{x \in \text{supp}(\mu)} \max_{i = 1, \ldots, n} |\mathfrak{d}_i f(x)|.
\end{equation}

We now claim that
\begin{equation}
\label{E3}
\Big(\sum_{i=1}^{n} \bar{\mathbb{E}}_{i} \frac{|(\mathfrak{d}_i f(X))^2 - (T_i \mathfrak{d}_i f(X))^2|}{|\mathfrak{d} f(X)|^2}\Big)^{1/2} \le 1.
\end{equation}
To see this, recall that by \eqref{nabla2},
$(\mathfrak{d}_i f(X))^2 = ((\mathfrak{D}_i f(X))^2 + \mathbb{E}_i(\mathfrak{D}_i f(X))^2)/2$,
and therefore
$$|(\mathfrak{d}_i f(X))^2 - (T_i \mathfrak{d}_i f(X))^2| \le ((\mathfrak{D}_i f(X))^2 + (T_i\mathfrak{D}_i f(X))^2)/2.$$
Taking expectations yields
$\bar{\mathbb{E}}_i |(\mathfrak{d}_i f(X))^2 - (T_i \mathfrak{d}_i f(X))^2| \le (\mathfrak{d}_i f(X))^2$,
which proves \eqref{E3}.

Combining \eqref{E1}, \eqref{E2} and \eqref{E3} with the assumptions from the theorem, we therefore arrive at
$|\mathfrak{d}|\mathfrak{d} f|| \le \sqrt{2} B_1 + B_2$.
Moreover, by \eqref{nablaiteriert2}, we have
$(\mathfrak{D}_{ij} f(x))^2 \le 4 (\mathfrak{d}_{ij} f(x))^2$ and hence
$$\int \lVert \mathfrak{D}^{(2)} f \rVert_{\text{HS}}^2 d\mu \le 4 B_1^2.\qquad (*)$$
Finally, consider the ``normalized'' function $f/(\sqrt{2}B_1 + B_2)$ and use $(*)$ in \eqref{R1} from the proof of
Theorem \ref{zentral}, respectively. The proof of Theorem \ref{einfachereBed} then follows by elementary computations.
\end{proof}

As for conditions \eqref{Cond1}, note that in typical cases (for instance, if the function $f$ is
symmetric) we have $B_1 = \Theta(B_2)$ as $n \to \infty$.

For functions of independent Rademacher variables taking values in $\{\pm1 \}$, we don't seem to
need first order differences. It is well-known that such functions can be represented in the form
\begin{equation}
\label{Fourier-Walsh}
f(X_1, \ldots, X_n) = \alpha_0 + \sum_{i=1}^{n} \alpha_i X_i + \sum_{i<j} \alpha_{ij} X_iX_j + \ldots,
\end{equation}
where the coefficients $\alpha_I$ (with a suitable multi-index $I$) are real numbers and the summation extends over all terms
up to the order $n$. More precisely, we have
$\alpha_{i_1 \ldots i_d} = \mathbb{E} f(X_1, \ldots, X_n)X_{i_1} \cdots X_{i_d}$
for any $i_1 < \ldots < i_d$, $d = 0, 1, \ldots, n$. This representation is called the \emph{Fourier--Walsh expansion} of the function
$f$, and the expression on the right-hand side of \eqref{Fourier-Walsh} is also known as a \emph{Rademacher chaos}. It is immediately
clear that \eqref{Fourier-Walsh} is at the same time the Hoeffding decomposition of $f$. Applying Corollary \ref{zentralfuerBernoulli}
to functions of this type leads to the following result:

\begin{proposition}
	\label{Bernoullibeispiel2}
	Let $\mu$ be the product measure of $n$ symmetric Bernoulli distributions $\mu_i = \frac{1}{2} \delta_{+1} + \frac{1}{2} \delta_{-1}$
	on $\{\pm1 \}$, and define $f \colon \mathbb{R}^n \to \mathbb{R}$ by
	$f(x_1, \ldots, x_n) := \sum_{i<j} \alpha_{ij} x_ix_j + \sum_{i<j<k} \alpha_{ijk} x_ix_jx_k + \ldots$,
	where the sum goes up to order $n$ and the $\alpha_{i_1 \ldots i_d}$ are any real numbers. Set
	$B := \sup_{x \in \{\pm1 \}^n} \lVert \mathfrak{D}^{(2)} f(x)\rVert_{\text{\emph{HS}}}$
	with $\mathfrak{D}^{(2)} f(x)$ as in \eqref{Hessediskret}. Then, we have
	$$\int \exp \left(\frac{1}{5 B} |f| \right) d\mu \le 2.$$
\end{proposition}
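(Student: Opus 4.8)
The plan is to obtain the bound directly from Corollary~\ref{zentralfuerBernoulli} applied to the normalized function $g := f/B$, with the second order parameter taken to be $b^2 = 1$; since $\tfrac{1}{3+2\cdot 1} = \tfrac15$, this is exactly $\int \exp(\tfrac{1}{5B}|f|)\, d\mu \le 2$. Two of the hypotheses of Corollary~\ref{zentralfuerBernoulli} come for free: the Fourier--Walsh expansion of $f$ given in the statement is its Hoeffding decomposition and has no terms of degree $0$ or $1$, so $g = \sum_{k=2}^n g_k$; and $\int \lVert \mathfrak{D}^{(2)} g \rVert_{\text{HS}}^2\, d\mu \le 1$, because by the very definition of $B$ the integrand satisfies $\lVert \mathfrak{D}^{(2)} g(x) \rVert_{\text{HS}} \le 1$ for every $x \in \{\pm 1\}^n$. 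Hence everything reduces to verifying the remaining condition $|\mathfrak{d}|\mathfrak{d} g|| \le 1$, equivalently the pointwise estimate $|\mathfrak{d}|\mathfrak{d} f(x)|| \le B$ on $\{\pm 1\}^n$.

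To prove this I would first record the two features of the symmetric Bernoulli case from Remark~\ref{DiskrDiff}(1): $\mathfrak{d}_i h = |\mathfrak{D}_i h| = \tfrac12|h(x) - h(\sigma_i x)|$ for any $h$, and $\mathfrak{D}_i f = \tfrac12(f - f\circ\sigma_i)$ is odd in the $i$-th coordinate, so $(\mathfrak{D}_i f(\sigma_i x))^2 = (\mathfrak{D}_i f(x))^2$. Writing $u(x) := \mathfrak{d} f(x) = (|\mathfrak{D}_1 f(x)|, \dots, |\mathfrak{D}_n f(x)|)$, so that $|\mathfrak{d} f(x)| = |u(x)|$, the key point is that $u(x)$ and $u(\sigma_i x)$ have the same $i$-th coordinate in modulus, while for $j \ne i$ their $j$-th coordinates differ by at most $|\mathfrak{D}_j f(x) - \mathfrak{D}_j f(\sigma_i x)| = 2|\mathfrak{D}_{ij} f(x)|$ (here $\mathfrak{D}_{ij} = \mathfrak{D}_i \mathfrak{D}_j$). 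Since replacing the $i$-th coordinate of $u(\sigma_i x)$ by that of $u(x)$ leaves the Euclidean norm of $u(\sigma_i x)$ unchanged, the reverse triangle inequality for $|\cdot|$ gives $\big||\mathfrak{d} f(x)| - |\mathfrak{d} f(\sigma_i x)|\big| \le 2\big(\sum_{j \ne i}(\mathfrak{D}_{ij} f(x))^2\big)^{1/2}$. Substituting this into the identity $|\mathfrak{d}|\mathfrak{d} f(x)||^2 = \tfrac14 \sum_{i=1}^n (|\mathfrak{d} f(x)| - |\mathfrak{d} f(\sigma_i x)|)^2$ (which is $\mathfrak{d}_i h = \tfrac12|h(x) - h(\sigma_i x)|$ applied to $h = |\mathfrak{d} f|$) and summing yields $|\mathfrak{d}|\mathfrak{d} f(x)||^2 \le \sum_{i \ne j}(\mathfrak{D}_{ij} f(x))^2 = \lVert \mathfrak{D}^{(2)} f(x) \rVert_{\text{HS}}^2 \le B^2$, as required.

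I expect the only genuinely delicate step to be this last pointwise estimate, more precisely the use of the oddness of $\mathfrak{D}_i f$ in $x_i$. Estimating $\big||u(x)| - |u(\sigma_i x)|\big|$ crudely by $|u(x) - u(\sigma_i x)|$ would pick up a term $2|\mathfrak{D}_i f(x)|$ from the $i$-th coordinate, which is exactly the first order difference $\mathfrak{d}_i f$ that Theorem~\ref{einfachereBed} has to bound separately through a parameter $B_2$; cancelling that term via the symmetry $\mathfrak{D}_i f(\sigma_i x) = -\mathfrak{D}_i f(x)$ is what makes the final estimate depend on $B$ alone and produces the clean constant $1/(5B)$. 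The rest is bookkeeping together with a single application of Corollary~\ref{zentralfuerBernoulli}.
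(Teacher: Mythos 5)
Your proposal is correct and follows essentially the same route as the paper: reduce to Corollary~\ref{zentralfuerBernoulli} applied to $f/B$ with $b^2=1$, and prove the pointwise bound $|\mathfrak{d}|\mathfrak{d} f||\le \lVert \mathfrak{D}^{(2)} f\rVert_{\text{HS}}$ by two applications of the reverse triangle inequality, with the diagonal term cancelled because $\mathfrak{D}_i f$ is odd in $x_i$ (the paper phrases this as $T_i\mathfrak{d}_i f=\mathfrak{d}_i f$ and works with the resampling operators $T_i$ and $\bar{\mathbb{E}}_i$ rather than the explicit flips $\sigma_i$, and with $\mathfrak{d}_{ij}f=|\mathfrak{D}_{ij}f|$ the two final bounds coincide). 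You have correctly identified the one delicate point, namely that a crude componentwise estimate would reintroduce the first-order difference $\mathfrak{d}_i f$.
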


\begin{proof}
First note that similarly to Remark \ref{DiskrDiff}(1), for products of symmetric Bernoulli distributions we have $\mathfrak{d}_{ij} f
= |\mathfrak{D}_{ij} f|$ for any $i \ne j$ and consequently $\lVert \mathfrak{d}^{(2)} f\rVert_{\mathrm{HS}} = \lVert \mathfrak{D}^{(2)}
f\rVert_{\mathrm{HS}}$. Therefore, in view of Corollary \ref{zentralfuerBernoulli}, it suffices to prove that
$|\mathfrak{d}|\mathfrak{d}f|| \le \lVert \mathfrak{d}^{(2)} f\rVert_{\mathrm{HS}}$
on $\text{supp}(\mu)$.

To this end, note that for any $i = 1, \ldots, n$, by the fact that $T_i |\mathfrak{d} f| = |T_i \mathfrak{d} f|$ and the reverse triangular inequality,
\begin{equation}\label{Schr1}
(\mathfrak{d}_i|\mathfrak{d} f|)^2 = \frac{1}{2} \bar{\mathbb{E}}_i (|\mathfrak{d} f| - |T_i \mathfrak{d} f|)^2
\le \frac{1}{2} \bar{\mathbb{E}}_i |\mathfrak{d} f - T_i \mathfrak{d} f|^2.
\end{equation}
Here, the difference $\mathfrak{d} f - T_i \mathfrak{d} f$ is defined componentwise. Using the Fourier--Walsh expansion \eqref{Fourier-Walsh} and
the fact that $x_i^2 = 1$ on $\text{supp}(\mu)$, it is easy to see that $T_i \mathfrak{d}_i f = \mathfrak{d}_i f$. Therefore, using the notations from
the proof of Theorem \ref{einfachereBed},
\begin{align}
|\mathfrak{d} f - T_i \mathfrak{d} f|^2
&= \frac{1}{2} \sum_{j=1}^{n}{}^{(i)} \big((\bar{\mathbb{E}}_j (f - T_jf)^2)^{1/2} - (\bar{\mathbb{E}}_j (T_if - T_{ij}f)^2)^{1/2}\big)^2\notag\\
&\le \frac{1}{2} \sum_{j=1}^{n}{}^{(i)} \bar{\mathbb{E}}_j (f - T_jf - T_if + T_{ij}f)^2.\label{Schr2}
\end{align}
Here, the last step follows from the reverse triangular inequality again (for the norm $(\bar{\mathbb{E}}_j(\cdot)^2)^{1/2}$). Combining \eqref{Schr1}
and \eqref{Schr2} and summing over $i = 1, \ldots, n$ finishes the proof.
\end{proof}

\section{Differentiable Functions: Proofs}

In order to prove Theorem \ref{kontinuierlich}, we need to adapt some of the elements of the proof of Theorem \ref{zentral} from
the previous sections. For that, if $(M, d)$ is a metric space and $f \colon M \to \mathbb{R}$ is a continuous function, we may define
the generalized modulus of the gradient by
\begin{equation}
\label{generalizedmodulus}
|\nabla^* f(x)| = \limsup_{y \to x} \frac{|f(x) - f(y)|}{d(x,y)}
\end{equation}
for any $x \in M$, where the limsup is assigned to be zero at isolated points. By the continuity of $f$, $x \mapsto |\nabla^* f(x)|$
is a Borel-measurable function. If $f$ is a differentiable function on some open subset $G \subset \mathbb{R}^n$,
the generalized modulus of the gradient agrees with the Euclidean norm of the usual gradient. We may iterate the generalized modulus
of the gradient by setting for any $x \in M$
\begin{equation}
\label{generalizedmodulusit}
|\nabla^*|\nabla^* f(x)|| := \limsup_{y \to x} \frac{||\nabla^* f(x)| - |\nabla^* f(y)||}{d(x,y)}.
\end{equation}

Using the generalized modulus of the gradient, we have the following analogues of inequalities \eqref{A'} and \eqref{B'} from Section 4.
Let $(M, d)$ be a metric space, equipped with some Borel probability measure $\mu$ which satisfies a logarithmic Sobolev
inequality with constant $\sigma^2$.
Moreover, let $u \colon M \to \mathbb{R}$ be a $\mu$-integrable locally Lipschitz function. Then, we have
\begin{equation}
\int e^{u - \int u d\mu} d\mu \leq \int e^{\sigma^2 |\nabla^* u|^2} d\mu.
\label{A}
\end{equation}
Moreover, if we additionally require
$|\nabla^* u| \leq 1$, we have
\begin{equation}
\int e^{t u^2} d\mu \leq \exp\Big(\frac{t}{1 - 2 \sigma^2 t} \int u^2 d\mu\Big)
\label{B}
\end{equation}
for any $0 \leq t < \frac{1}{2 \sigma^2}$. As mentioned in Section 4, \eqref{A} and \eqref{B} are due to \cite{B-G1} and \cite{A-M-S}.

Now consider $M = G$, where $G \subset \mathbb{R}^n$ is some open subset equipped with the Euclidean metric. By proceeding as in the
proof of Proposition \ref{ExponentielleUngl}, we arrive at the following exponential moment inequality:

\begin{proposition}
\label{ExponentielleUngl2}
Let $G \subset \mathbb{R}^n$ be some open set, and let $\mu$ be a probability measure on $(G, \mathcal{B}(G))$ which satisfies the
logarithmic Sobolev inequality \eqref{LSI} with Sobolev constant $\sigma^2 > 0$. Furthermore, let $f \colon G \to \mathbb{R}$ be a
locally Lipschitz $\mu$-integrable function with $\mu$-mean zero such that $|\nabla^* f|$ is locally Lipschitz and $|\nabla^*|\nabla^* f||
\le 1$. Here, $|\nabla^* f|$ is the generalized modulus of the gradient from \eqref{generalizedmodulus}. Then, we have
$$\int_G \exp \left(\frac{1}{2 \sigma^2} f\right) d\mu \le \exp\left(\frac{1}{2 \sigma^2} \int_G |\nabla^* f|^2 d\mu \right).$$
\end{proposition}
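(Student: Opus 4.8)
The plan is to run exactly the chain of estimates from the proof of Proposition~\ref{ExponentielleUngl}, but with the difference operator $\mathfrak{d}$ replaced everywhere by the generalized modulus of the gradient $\nabla^*$, and observing that in the present setting the single logarithmic Sobolev constant $\sigma^2$ takes over the roles of both $\sigma^2$ and $\tilde{\sigma}^2$. The two inputs are inequalities \eqref{A} and \eqref{B}, both available here because $\mu$ satisfies the logarithmic Sobolev inequality \eqref{LSI} with constant $\sigma^2$.

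First I would apply \eqref{A} to the function $\lambda f$, where $\lambda \ge 0$ is a parameter. Since $f$ is locally Lipschitz, $\mu$-integrable, and has $\mu$-mean zero, and since $|\nabla^*(\lambda f)| = \lambda\,|\nabla^* f|$, this gives
\[
\int_G e^{\lambda f}\,d\mu \;\le\; \int_G e^{\sigma^2\lambda^2|\nabla^* f|^2}\,d\mu .
\]
Next I would apply \eqref{B} to the function $u:=|\nabla^* f|$ with exponent $t:=\lambda^2\sigma^2$. This is legitimate because $|\nabla^* f|$ is locally Lipschitz and $|\nabla^*|\nabla^* f||\le 1$ by hypothesis, provided $t<\tfrac{1}{2\sigma^2}$, i.e. $\lambda<\tfrac{1}{\sqrt2\,\sigma^2}$; it yields
\[
\int_G e^{\sigma^2\lambda^2|\nabla^* f|^2}\,d\mu \;\le\; \exp\!\Big(\tfrac{\sigma^2\lambda^2}{1-2\sigma^4\lambda^2}\textstyle\int_G|\nabla^* f|^2\,d\mu\Big).
\]
Combining the two displays gives $\int_G e^{\lambda f}\,d\mu\le\exp\big(\tfrac{\sigma^2\lambda^2}{1-2\sigma^4\lambda^2}\int_G|\nabla^* f|^2\,d\mu\big)$ for every $\lambda\in[0,\tfrac{1}{\sqrt2\,\sigma^2})$. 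I would then specialize to $\lambda=\tfrac{1}{2\sigma^2}$, which lies in the admissible range since $\tfrac12<\tfrac1{\sqrt2}$; a one-line computation gives $2\sigma^4\lambda^2=\tfrac12$ and $\tfrac{\sigma^2\lambda^2}{1-2\sigma^4\lambda^2}=2\sigma^2\lambda^2=\tfrac1{2\sigma^2}$, so the inequality becomes precisely the asserted bound $\int_G\exp(\tfrac1{2\sigma^2}f)\,d\mu\le\exp(\tfrac1{2\sigma^2}\int_G|\nabla^* f|^2\,d\mu)$.

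There is no real obstacle here; the argument is routine once \eqref{A} and \eqref{B} are in hand. The only point requiring a word of care is the applicability of \eqref{B} to $u=|\nabla^* f|$: one must invoke that $|\nabla^* f|$ is locally Lipschitz with $|\nabla^*|\nabla^* f||\le 1$, which is exactly the standing assumption, and implicitly that $\int_G|\nabla^* f|^2\,d\mu<\infty$ (otherwise the claim is vacuous) — the latter being automatic since, via \eqref{B} itself (equivalently the Herbst argument), a logarithmic Sobolev inequality forces a function with gradient modulus bounded by $1$ to be square-integrable. Everything else is identical to the discrete case, only simpler because a single Sobolev constant appears.
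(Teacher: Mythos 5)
Your argument is correct and is exactly the route the paper intends: it combines \eqref{A} applied to $\lambda f$ with \eqref{B} applied to $u=|\nabla^* f|$ at $t=\sigma^2\lambda^2$, then optimizes at $\lambda=\tfrac{1}{2\sigma^2}$, which is precisely ``proceeding as in the proof of Proposition \ref{ExponentielleUngl}'' with $\tilde\sigma=\sigma$ (the paper itself only sketches this, noting the result is also a special case of \cite[Proposition 2.1]{B-C-G}). The arithmetic checks out, including the admissibility of $\lambda=\tfrac{1}{2\sigma^2}<\tfrac{1}{\sqrt2\,\sigma^2}$.
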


Proposition \ref{ExponentielleUngl2} is a special case of \cite[Proposition 2.1]{B-C-G}. If $f$ is a
$\mathcal{C}^2$-function, the condition $|\nabla^*|\nabla^* f|| \le 1$ can be simplified by the following lemma:

\begin{lemma}
\label{itGradHess}
Let $G \subset \mathbb{R}^n$ be some open set. Then, for any $\mathcal{C}^2$-smooth function $f \colon G \to \mathbb{R}$, the
function $|\nabla^* f|$ is locally Lipschitz and satisfies
$$|\nabla^*|\nabla^* f(x)|| \le \lVert f''(x) \rVert_\text{\emph{Op}}$$
at all points $x \in G$, where $f''(x)$ denotes the Hessian of $f$ at $x \in G$.
\end{lemma}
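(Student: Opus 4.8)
The plan is to establish two things: first, that $|\nabla^* f|$ is locally Lipschitz when $f$ is $\mathcal{C}^2$, and second, that its generalized gradient modulus is pointwise bounded by $\lVert f''(x)\rVert_{\text{Op}}$. Both follow from the elementary fact that for a $\mathcal{C}^1$ function, $|\nabla^* f|$ coincides with the Euclidean norm $|\nabla f|$ of the ordinary gradient, so the problem reduces to a statement about the map $x \mapsto |\nabla f(x)|$ where $\nabla f \colon G \to \mathbb{R}^n$ is itself a $\mathcal{C}^1$ map with derivative the Hessian $f''$. First I would invoke the identity $|\nabla^* f| = |\nabla f|$ (already noted in the paragraph following \eqref{generalizedmodulusit}) to rewrite everything in terms of $\nabla f$.

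\textbf{Local Lipschitz continuity.} Since $\nabla f$ is $\mathcal{C}^1$ on the open set $G$, it is locally Lipschitz: around any $x_0 \in G$ choose a convex neighborhood $U$ with $\overline{U} \subset G$ compact, on which $\lVert f''\rVert_{\text{Op}}$ is bounded by some $L$; the mean value inequality along segments then gives $|\nabla f(x) - \nabla f(y)| \le L|x-y|$ for $x,y \in U$. Composing with the $1$-Lipschitz map $v \mapsto |v|$ on $\mathbb{R}^n$ shows $x \mapsto |\nabla f(x)|$ is $L$-Lipschitz on $U$, hence locally Lipschitz on $G$. This also shows $|\nabla^*|\nabla^* f||$ is well-defined as a finite Borel function.

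\textbf{The pointwise bound.} Fix $x \in G$. From \eqref{generalizedmodulusit} with the Euclidean metric,
$$|\nabla^*|\nabla^* f(x)|| = \limsup_{y \to x} \frac{\bigl||\nabla f(x)| - |\nabla f(y)|\bigr|}{|x - y|} \le \limsup_{y \to x} \frac{|\nabla f(x) - \nabla f(y)|}{|x-y|},$$
using the reverse triangle inequality $\bigl||v| - |w|\bigr| \le |v - w|$. Now $\nabla f$ is differentiable at $x$ with derivative $f''(x)$, so $\nabla f(y) - \nabla f(x) = f''(x)(y-x) + o(|y-x|)$ as $y \to x$; dividing by $|y-x|$ and taking $\limsup$ gives a bound of $\sup_{|u|=1}|f''(x)u| = \lVert f''(x)\rVert_{\text{Op}}$, which is exactly the claim. (Alternatively, one can pick $y = x + tu$ for a fixed unit vector $u$, let $t \to 0$, and observe the quotient tends to $|f''(x)u|$, then take the supremum over $u$; this avoids appealing to differentiability of $\nabla f$ in the strong sense and only uses that $f$ is twice differentiable at $x$.)

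\textbf{Main obstacle.} There is no serious obstacle here — the lemma is essentially the chain/reverse-triangle inequality combined with the definition of the operator norm as the best Lipschitz constant of the linear map $f''(x)$. The one point requiring a little care is making the $\limsup$ argument clean: one must ensure the $o(|y-x|)$ error from the Taylor expansion of $\nabla f$ does not contribute, which is immediate once one fixes the reference point $x$ and uses differentiability of $\nabla f$ at that single point. I would present the direction-by-direction version since it needs only that $f$ is $\mathcal{C}^2$ near $x$ (so that $\nabla f$ is differentiable at $x$) and keeps the computation transparent.
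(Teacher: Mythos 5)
Your proof is correct, and it takes a slightly different (and arguably cleaner) route than the paper. The paper splits into two cases: where $\nabla f(x) \ne 0$ it applies the chain rule to compute $\nabla|\nabla f(x)| = |\nabla f(x)|^{-1} f''(x)\nabla f(x)$ explicitly and reads off the bound from there, and where $\nabla f(x) = 0$ it falls back on the Taylor expansion of $\nabla f$ at $x$. You avoid the case distinction entirely by first applying the reverse triangle inequality $\bigl||\nabla f(x)|-|\nabla f(y)|\bigr| \le |\nabla f(x)-\nabla f(y)|$ and then using Fr\'{e}chet differentiability of $\nabla f$ at $x$ once, uniformly; this subsumes the paper's zero-gradient case and replaces its chain-rule computation with the $1$-Lipschitzness of the Euclidean norm. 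What the paper's computation buys is an exact formula for $\nabla|\nabla f|$ off the critical set, which could be of independent use; what your argument buys is brevity and a single hypothesis (differentiability of $\nabla f$ at the point). Your explicit treatment of local Lipschitz continuity is also more careful than the paper's, which asserts it without detail. One small caveat: your parenthetical alternative --- fixing a unit vector $u$, letting $t \to 0$ along $y = x + tu$, and taking the supremum over $u$ afterwards --- does not by itself bound the full $\limsup_{y\to x}$ in \eqref{generalizedmodulusit}, since directional limits need not control the limit over arbitrary approaches without uniformity in the direction; under the stated $\mathcal{C}^2$ hypothesis this is harmless because the main (Fr\'{e}chet) argument applies, but the claim that the directional version suffices under mere twice-differentiability at $x$ should be dropped or justified separately.
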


\begin{proof}
By chain rule, $|\nabla f(x)|$ is differentiable on $\{|\nabla f(x)| \ne 0 \}$ with
$\nabla |\nabla f(x)| = \frac{1}{|\nabla f(x)|} f''(x) \nabla f(x)$,
which immediately yields the desired result if $|\nabla f(x)| \ne 0$.

It remains to consider the case $|\nabla f(x)| = 0$. Here, for any $v \in \mathbb{R}^n$ such that $|v| = 1$, by Taylor expansion
we obtain
$\langle \nabla f(x+h), v \rangle = \langle f''(x) v, h \rangle + o(|h|)$
as $h \to 0$. Here, the $o$-term can be bounded by a quantity which does not depend on the choice of $v$. Therefore, dividing by $|h|$ and
taking limits according to \eqref{generalizedmodulus}, the proof is finished by noting that
\begin{equation*}
|\nabla^* |\nabla f(x)|| = \; \limsup_{h \to 0} \frac{|\nabla f(x+h)|}{|h|} \le \; \sup \Big\{\big\langle f''(x) v, \frac{h}{|h|} \big\rangle \colon |v| = 1, h \ne 0 \Big\} = \lVert f''(x) \rVert_\text{Op}.
\end{equation*}
\end{proof}

%
%

We can now prove Theorems \ref{kontinuierlich} and \ref{kontinuierlich+1.Ordn}:

\begin{proof}[Proof of Theorem \ref{kontinuierlich}]
Given a function $f$ as in Theorem \ref{kontinuierlich}, applying Proposition \ref{ExponentielleUngl2} together with Lemma
\ref{itGradHess} yields
\begin{equation}
\label{expUnglFormel2.5}
\int_G \exp \Big(\frac{1}{2 \sigma^2} f\Big) d\mu \le \exp\Big(\frac{1}{2 \sigma^2} \int_G |\nabla f|^2 d\mu \Big).
\end{equation}
Since $\mu$ satisfies a logarithmic Sobolev inequality with constant $\sigma^2$, it also satisfies a Poincar\'{e} inequality \eqref{PI}
with constant $\sigma^2$. Therefore, since $\int_G \partial_i f d\mu = 0$ for all $i$, we have
$\int_G (\partial_i f)^2 d\mu \le \sigma^2 \sum_{j=1}^{n} \int_G (\partial_{ij} f)^2 d\mu$
for all $i = 1, \ldots, n$, where $\partial_{ij} f(x) = \frac{d^2f(x)}{dx_i dx_j}$. Summing up over all $i$, we get
\begin{equation}
\label{nablaHesse}
\int_G |\nabla f|^2 d\mu \le \sigma^2 \int_G \lVert f'' \rVert_\text{HS}^2 d\mu.
\end{equation}
Combining \eqref{expUnglFormel2.5}, \eqref{nablaHesse} and the assumptions from Theorem \ref{kontinuierlich}, we
arrive at
$$\int_G \exp \Big(\frac{1}{2 \sigma^2} f\Big) d\mu \le \exp\Big(\frac{1}{2} \int_G \lVert f'' \rVert_\text{HS}^2 d\mu \Big)
\le \exp\Big(\frac{b^2}{2}\Big).$$
The rest of the proof is similar to the proof of Theorem \ref{zentral}. We finally arrive at the inequality
$1/(2 \sigma^2 \kappa) \le \log 2/(2 \sigma^2 \log 2 + b^2 \sigma^2)$.
Noting that
$\log 2/(2 \sigma^2 \log 2 + x \sigma^2) \ge 1/(2\sigma^2(1 + x))$
for all $x \ge 0$ finishes the proof.
\end{proof}

\begin{proof}[Proof of Theorem \ref{kontinuierlich+1.Ordn}]
	The proof is similar to the proof of Theorem \ref{mit Hoeffding1} assuming condition $(i)$ from the latter theorem. Setting $\mu[h]
	= \int_G h d\mu$ for any $h \in L^1(\mu)$, write $f = \varphi_1 + \varphi_2$ with
	$\varphi_1(x) = \sum_{i=1}^{n} \mu[\partial_i f] (x_i - \mu[x_i])$, $\varphi_2(x) = f(x) - \varphi_1(x)$.
	We now apply the basic argument \eqref{Argument} from the proof of Theorem \ref{mit Hoeffding1}. Here we need to check that
	$\int e^{c_i |\varphi_i|} d\mu \le 2$
	for $i = 1, 2$ and some constants $c_1, c_2 > 0$. By Theorem \ref{kontinuierlich} applied to $\varphi_2$, we may choose
	$c_2/2 = 1/(4 \sigma^2(1 + b^2))$.
	
	For estimating the function $\varphi_1$, note that $|\nabla \varphi_1|^2 = \sum_{i=1}^{n} (\mu[\partial_i f])^2 \le \sigma^2b_0^2$ by
	assumption. Therefore, applying \eqref{A} yields
	$$\int e^{\lambda \varphi_1} d\mu \leq \int e^{\sigma^2 \lambda^2 |\nabla \varphi_1|^2} d\mu \le e^{\sigma^4 \lambda^2 b_0^2}$$
	for all $\lambda > 0$. Proceeding as in the proof of Theorem \ref{mit Hoeffding1}, we obtain
	$c_1/2 = \linebreak[4] (\log 2)^{1/2}/(4 \sigma^2 b_0) \ge 1/(5 \sigma^2 b_0)$,
	which easily yields the desired result.
\end{proof}

\section{Applications}

\subsection{Functions of Independent Random Variables}

As a first example, we shall consider a certain type of statistics related to Hoeffding-type expansions. In detail, for any $n \in \mathbb{N}$, let $X_1, \ldots, X_n$ be independent random variables and $T_n$ a statistic of the form
\begin{align}
\label{Statistik}
\begin{split}
T_n(X_1, \ldots, X_n) = \; &h_{0,n} + \sum_i h_{1,n} (X_i) n^{-1} + \sum_{i<j} h_{2,n} (X_i, X_j) n^{-2}\\
&+ \sum_{i<j<k} h_{3,n} (X_i, X_j, X_k) n^{-3} + \ldots.
\end{split}
\end{align}
Here, $h_{d,n}$, $d=0, 1, \ldots, n$, are some ``kernel'' functions which are completely degenerate with respect to the $X_i$. Usually, we then have
concentration inequalities of the form
$P(\sqrt{n} (T_n - h_{0,n}) \ge t) \le e^{-ct^2}$,
where $c$ is some absolute constant. Using second order concentration, it is possible to sharpen these bounds. Here we mainly use the results from Section 6.

\begin{example}
\label{HoeffdingEx}
Let $X_1, \ldots, X_n$ be some independent random variables, and let $T_n$ be a statistic of the form \eqref{Statistik}.
Assume we have
\begin{equation}
\label{Bed}
\lVert n \mathfrak{d}^{(2)} T_n \rVert_{\text{\emph{HS}}} \le M\qquad \text{and}\qquad
|n \mathfrak{d}_i (T_n - \sum_i h_{1,n} (X_i) n^{-1})| \le M \; \; \forall i
\end{equation}
for some universal constant $M$ and with $\mathfrak{d}^{(2)} T_n$ as in \eqref{Hessenabla}. Then, there exists some numerical constant $c > 0$ such that
$$P \Big(n |T_n - h_{0,n} - \sum_i h_{1,n} (X_i) n^{-1}| \ge t\Big) \le 2 e^{-ct/M}.$$
\end{example}

This follows immediately from Theorem \ref{einfachereBed}. In particular, conditions \eqref{Bed} are satisfied if $\lVert h_{d,n} \rVert_\infty \equiv \sup_x |h_{d,n} (x)| \le L$ for $d \le m$ and $h_{d,n} \equiv 0$ for all $d \ge m$,
where $m \in \mathbb{N}$ is independent of $n$ and where $L$ is some absolute constant.

A special case is given by functions on the discrete cube, i.\,e. we assume $X_1, \ldots, X_n$ to be i.i.d. random variables with distributions $\mu_i = \frac{1}{2} \delta_{+1} + \frac{1}{2} \delta_{-1}$. In this situation, by Proposition \ref{Bernoullibeispiel2}, we may replace conditions \eqref{Bed} by the single condition
$
\lVert n \mathfrak{D}^{(2)} T_n \rVert_{\text{HS}} \le M
$.
Here, $\mathfrak{D}^{(2)} T_n$ is the ``Hessian'' of $T_n$ with respect to $\mathfrak{D}$ defined in \eqref{Hessediskret}. For instance, if
$T_n(X_1, \ldots, X_n) = \alpha_0 + \sum_i n^{-1} \alpha_i X_i + \sum_{i<j} n^{-2} \alpha_{ij} X_iX_j$ for real numbers $\alpha_0,
\alpha_i, \alpha_{ij}$, then $\lVert n \mathfrak{D}^{(2)} T_n \rVert_{\text{HS}} \le M$ just means $n^{-1} (2 \sum_{i<j} \alpha_{ij}^2)^{1/2} \le M$.

As a second example, we shall consider additive functionals of partial sums, i.\,e. functionals of the form
\begin{equation}
\label{partsum}
S_f := S_f(X) := \sum_{i=1}^{n} f\Big(\sum_{j=1}^{i} X_j\Big).
\end{equation}
Random variables of this kind appear e.\,g. as additive functionals of random walks (cf. \cite{B-I}). Here
we obtain the following result:

\begin{example}
Let $X_1, \ldots, X_n$ be a set of independent random variables, and let $f \colon \mathbb{R} \to \mathbb{R}$ be a bounded measurable function.
Consider $S_f = S_f(X)$ as defined in \eqref{partsum}. Then, there exists some numerical constant $c> 0$ such that for any $t \ge 0$,
$$P(|S_f - \mathbb{E}S_f| \ge t) \le 2 \exp \Big(-c \frac{t}{n^2 \lVert f \rVert_\infty}\Big).$$
\end{example}

\begin{proof}
The proof is obtained by combining Theorem \ref{mit Hoeffding1} and Theorem
\ref{einfachereBed}. For that, we simply have to calculate the respective
differences of first and second order.

To start, note that the first order Hoeffding term of $S_f(X)$ is given by
\begin{equation*}
S^1_f(X) = \sum_{\nu = 1}^{n} \Big(\sum_{i \ge \nu} \big(\mathbb{E}^{(\nu)} f(\sum_{j=1}^{i}X_j) - \mathbb{E} f(\sum_{j=1}^{i}X_j)\big) \Big),
\end{equation*}
where $\mathbb{E}^{(\nu)}$ denotes taking the expectation with respect to all
the random variables $X_1, \ldots, X_n$ but $X_\nu$.
It follows that for any $\nu = 1, \ldots, n$,
\begin{align*}
(\mathfrak{d}_\nu S^1_f(X))^2 = \frac{1}{2} \bar{\mathbb{E}}_\nu
\Big( \sum_{i \ge \nu} \big(\mathbb{E}^{(\nu)} f(\sum_{j=1}^{i} X_j)
- \mathbb{E}^{(\nu)} f(\sum_{j=1}^{i}T_\nu X_j) \big)\Big)^2 \le 2 \lVert f \rVert_\infty^2(n-\nu+1)^2,
\end{align*}
and consequently
$|\mathfrak{d} S^1_f(X)|^2 \le 2 \lVert f \rVert_\infty^2 \sum_{\nu=1}^{n}
(n-\nu+1)^2 = \frac{1}{3} n(n+1)(2n+1) \lVert f \rVert_\infty^2$.

Next we need to check the second order conditions from Theorem \ref{mit Hoeffding1}, i.\,e. \eqref{bedingungen} for $RS^1_f(X) := S_f(X) - S^1_f(X) - \mathbb{E} S_f(X)$ (noting that $\mathfrak{d}^{(2)}RS^1_f(X) = \mathfrak{d}^{(2)}S^1_f(X)$). As shown in (the proof of) Theorem \ref{einfachereBed}, these conditions can be replaced by \eqref{Cond1}.
To see the first condition in \eqref{Cond1}, for any $\nu \ne \mu$,
\begin{align*}
&(\mathfrak{d}_{\nu \mu} RS_f(X))^2 = (\mathfrak{d}_{\nu \mu} S_f(X))^2\\
= \ &\frac{1}{4} \bar{\mathbb{E}}_{\nu \mu} \Big(\sum_{i \ge \nu \vee \mu}\big(f(\sum_{j=1}^{i} X_j) - T_\nu f(\sum_{j=1}^{i} X_j)-T_\mu f(\sum_{j=1}^{i} X_j)+T_{\nu \mu} f(\sum_{j=1}^{i} X_j)\big)\Big)^2\\
\le \ &4 \lVert f \rVert_\infty^2(n-(\nu \vee \mu)+1)^2
\end{align*}
using similar arguments as above, and therefore
$\lVert\mathfrak{d}^{(2)} S_f(X)\rVert_\mathrm{HS}^2 = \sum_{\nu \ne \mu} (\mathfrak{d}_{\nu \mu} S_f(X))^2 \le C n^4 \lVert f \rVert_\infty^2$
for some numerical constant $C > 0$.
Moreover, to see the second condition in \eqref{Cond1}, for any $\nu = 1, \ldots, n$,
\begin{align*}
&\big(\mathfrak{d}_\nu (S_f(X) - S^1_f(X) - \mathbb{E} S_f(X))\big)^2\\
= \ &\frac{1}{2} \bar{\mathbb{E}}_\nu \Big( \sum_{i \ge \nu} \big(f(\sum_{j=1}^{i} X_j) - T_\nu f(\sum_{j=1}^{i} X_j) - \mathbb{E}^{(\nu)} f(\sum_{j=1}^{i}X_j)
+ \mathbb{E}^{(\nu)} T_\nu f(\sum_{j=1}^{i} X_j) \big)\Big)^2\\
\le \ &8 \lVert f \rVert_\infty^2 (n-\nu+1)^2.
\end{align*}
Combining these estimates we easily arrive at the result.
\end{proof}

We may furthermore apply our results in the context of bootstrap methods. Suppose $X_1, \ldots, X_n, \ldots$ are random elements taking values
in $\mathbb{R}^p$ (or some other separable metric space) which are independent and identically distributed from some distribution
$P \in \mathcal{P}_0$. Here, $\mathcal{P}_0$ is a set of probability measures on $\mathbb{R}^p$ which contains all discrete measures.
By $\hat{P}_n$ we denote the empirical measure of the first $n$ observations. Let $T_n \equiv T_n(X_1, \ldots, X_n; P) \equiv
T_n(\hat{P}_n; P)$ be a sequence of symmetric statistics which may depend on the distribution $P$, and let $h$ be a bounded real
function defined on the range of $T_n$.

Here we are interested in estimating $\theta_n(P) := \mathbb{E}_P h(T_n(X_1, \ldots, X_n; P))$. Given $X_1,\linebreak[2] \ldots, X_n$,
Efron's (nonparametric) bootstrap suggests to estimate $\theta_n(P)$ by $\theta_n(\hat{P}_n)$. That is, if we set
$$B_n (P) = \frac{1}{n^n} \sum_{i_1, \ldots, i_n = 1}^{n} h(T_n(X_{i_1}, \ldots, X_{i_n}; P)),$$
Efron's bootstrap is given by $B_n(\hat{P}_n)$. In many situations, this bootstrap can be successfully applied, but in a number of
examples (in particular due to bias problems) it fails asymptotically. These problems have been addressed by D.\,N. Politis and J.\,P. Romano \cite{P-R}, F. G\"{o}tze \cite{G} and P.\,J. Bickel, F. G\"{o}tze and W.\,R. van Zwet \cite{B-G-Z} by introducing the $m$ out of $n$ bootstraps, i.\,e.
sampling from an i.i.d. sample of size $n$ $m$-times independently with or without replacement. For instance, in the case of sampling
without replacement (also called the $\binom{n}{m}$ bootstrap), we consider
$$J_{m,n}(P) = J_m(P) = \frac{1}{\binom{n}{m}} \sum_{i_1 < \ldots < i_m} h(T_m(X_{i_1}, \ldots, X_{i_m}; P))$$
as an estimator of $\theta_n(P)$.
Then, the $\binom{n}{m}$ bootstrap estimator of $J_{m,n}(P)$ is given by $J_{m,n}(\hat{P}_n)$.

In order to access the accuracy of this estimate, one would have to estimate the error involved in replacing $P$ by $\hat{P}_n$ in the functional $J_{m,n}(P)$. Under sufficient smoothness conditions for the dependence on $P$, this would lead to first or second order Hoeffding expansions involving a kernel of $m+1$ or $m+2$ variables, respectively. This would be necessary for evaluating the bias term of this bootstrap estimator. For the sake of brevity, we shall consider the estimate of the variance term for the original $P$ only at this point.

At first order, the variance part of the error has been estimated in \cite{P-R} and \cite{G}. For instance, by \cite[Theorem 1]{B-G-Z}, if $\frac{m}{n} \to 0$, $m \to \infty$, we have
$
J_m(P) = \theta_m(P) + \mathcal{O}_P ((m/n)^{1/2}).
$
Knowing (or at least estimating) the first order Hoeffding term of $J_m(P)$, we may sharpen this result by the
second order results in this paper:

\begin{proposition}
\label{Bootstrap2}
Suppose $\frac{m}{n} \to 0$, $m \to \infty$. Let $h = \sum_{i=0}^{m} h_i$ be the Hoeffding decomposition of $h = h(T_m(X_1, \ldots, X_m; P))$,
and assume that
\begin{equation}
\label{Cond}
|\mathfrak{d}_{ij} h| \le \frac{c_1}{m},\qquad |\mathfrak{d}_i(h-h_0-h_1)| \le c_2
\end{equation}
for all $1 \le i <j \le m$ and all $i = 1, \ldots, m$, respectively, where $c_1$ and $c_2$ are some absolute constants.
Let $J_{m,1}(P)$ denote the first order Hoeffding term of $J_m(P)$.
Then, we have
$$J_m(P) = \theta_m(P) + J_{m,1}(P) + \mathcal{O}_P (\frac{m}{n}).$$
\end{proposition}

\begin{proof}
Noting that $\mathbb{E}_P J_m(P) = \theta_m(P)$, let us check the conditions from Theorem \ref{einfachereBed} for  $R J_m (P) := J_m(P) - \theta_m(P) - J_{m,1}(P)$ (cf. \eqref{Projektion}). Using $\mathfrak{d}^{(2)} RJ_m(P) = \mathfrak{d}^{(2)} J_m(P)$ and \eqref{Cond}, by elementary counting arguments we obtain
\begin{align*}
\lVert \mathfrak{d}^{(2)} J_m(P) \rVert_{\text{HS}} &= \Big(\sum_{j \ne k \le n}\Big(\mathfrak{d}_{jk}\frac{1}{\binom{n}{m}} \sum_{\substack{i_1 < \ldots < i_m \\ j,k \in \{i_1, \ldots, i_m\}}} h(X_{i_1}, \ldots, X_{i_m}; P)\Big)^2\Big)^{1/2}\\
&\le \Big(\sum_{j \ne k \le n}\frac{\binom{n-2}{m-2}}{\binom{n}{m}^2} \sum_{\substack{i_1 < \ldots < i_m \\ j,k \in \{i_1, \ldots, i_m\}}} (\mathfrak{d}_{jk}h(X_{i_1}, \ldots, X_{i_m}; P))^2\Big)^{1/2} \le c_1 \frac{m}{n}.
\end{align*}
Here, to see the first inequality, we may rewrite $\mathfrak{d}_{jk}$ by \eqref{nabla3} and use the inequality $\mathbb{E}(\sum_{i=1}^\nu f_i)^2 \le \nu \sum_{i=1}^\nu \mathbb{E}f_i^2$ with $\mathbb{E}$ replaced by $\bar{\mathbb{E}}_{jk}$ and $\nu = \binom{n-2}{m-2}$.
Similarly, we have
$|\mathfrak{d}_i R J_m(P)| \le c_2 \frac{m}{n}$ for all $i$.
The proof now follows by applying Theorem \ref{einfachereBed}.
\end{proof}

As for the first order Hoeffding term $J_{m,1}(P)$, we have
$J_{m,1}(P) = \sum_{i=1}^n g_1(X_i)$ with
$$g_1(X_i) = \frac{m}{n} \big(\mathbb{E}_P (h(T_m(X_i, X_{j_1}, \ldots, X_{j_{m-1}})) | X_i)
- \mathbb{E}_Ph(T_m(X_i, X_{j_1}, \ldots, X_{j_{m-1}}))\big),$$
where $j_1 < \ldots < j_{m-1}$ is any $(m-1)$-tuple from $\{1, \ldots, n\} \setminus \{i\}$.
Conditions \eqref{Cond} imply that $h(T_m(X_1, \ldots, X_m);P)$ is ``normalized'', i.\,e. we have $B_1 = B_2 = \mathcal{O}(1)$ in Theorem
\ref{einfachereBed} for $f = h - h_0 - h_1$. This may be achieved by requiring $h$ to be sufficiently smooth.

In fact, in many applications, we can only assume $|\mathfrak{d}_{ij} h| \le c_1$. In this case, we still get $J_m(P) = \theta_m(P) + J_{m,1}(P) + \mathcal{O}_P (m^2/n)$ in Proposition \ref{Bootstrap2}. A typical situation is $h = 1_A$ for some measurable set $A \subset \mathbb{R}$, i.\,e. we estimate the probability of $\{h(T_n) \in A\}$. Here, we clearly have $|\mathfrak{d}_{ij} h| \le c_1$ and $|\mathfrak{d}_i(h-h_0-h_1)| \le c_2$. Consequently, while we cannot achieve the error of Proposition \ref{Bootstrap2} in this situation, we still get an improved consistency result especially for small $m$.

\subsection{Differentiable Functions}

We may apply Theorem \ref{kontinuierlich} in the context of random matrix theory. Here we consider two cases.

\textbf{Case 1} (Wigner matrices). Let $\{\xi_{jk}, 1 \le j \le k \le N \}$ be a family of independent real-valued random variables whose
distributions all satisfy a logarithmic Sobolev inequality \eqref{LSI} with common constant
$\sigma^2$. Putting $\xi_{jk} = \xi_{kj}$ for $1 \le k < j \le N$, we consider a symmetric $N \times N$ random matrix $\Xi =
(\xi_{jk}/\sqrt{N})_{1 \le j, k \le N}$. Denote by $\mu^{(N)} = \mu$ the joint distribution of its ordered eigenvalues $\lambda_1 \le
\ldots \le \lambda_N$ on $\mathbb{R}^N$ (in fact, $\lambda_1 < \ldots < \lambda_N$ a.s.). By a simple
argument using the Hoffman--Wielandt theorem, $\mu$ satisfies a logarithmic Sobolev inequality with constant $\sigma_N^2 =
2 \sigma^2/N$ (see for instance \cite{B-G3}). Note that similar observations also hold for
Hermitian random matrices.

\textbf{Case 2} ($\beta$-ensembles). For $\beta > 0$ fixed, let $\mu_{\beta, V}^{(N)} = \mu^{(N)} = \mu$ be the
probability distribution on $\mathbb{R}^N$ with density given by
\begin{equation}
\label{beta-ensemble}
\mu (d \lambda) = \frac{1}{Z_N} e^{-\beta N \mathcal{H}(\lambda)} d\lambda,\qquad
\mathcal{H}(\lambda) = \frac{1}{2} \sum_{k=1}^{N} V(\lambda_k) - \frac{1}{N} \sum_{1 \le k < l \le N} \log(\lambda_l - \lambda_k)
\end{equation}
for $\lambda = (\lambda_1, \ldots, \lambda_N)$ such that $\lambda_1 < \ldots < \lambda_N$.
Here, $V \colon \mathbb{R} \to \mathbb{R}$ is a strictly convex $\mathcal{C}^2$-smooth function and $Z_N$ is a normalization
constant. It is well-known that for $\beta = 1, 2, 4$, these probability measures correspond to the distributions of the classical
invariant random matrix ensembles (orthogonal, unitary and symplectic, respectively).
For other $\beta$, one can interpret \eqref{beta-ensemble} as particle systems on the real line with Coulomb interactions.
Using the convexity of $V$, we may easily verify that
\begin{equation}
\label{conv}
\mathcal{H}''(\lambda) \ge a \text{Id}
\end{equation}
uniformly in $\lambda$, where $\mathcal{H}''(\lambda)$ denotes the Hessian of $\mathcal{H}$, $\text{Id}$ denotes the $N \times N$
identity matrix and $a > 0$ is some constant. As a consequence, by the classical Bakry-Emery criterion, $\mu$ satisfies a
logarithmic Sobolev inequality \eqref{LSI} with constant $\sigma_N^2 = 1/(aN)$. For a detailed discussion see S.\,G. Bobkov and M. Ledoux \cite{B-L}.

Now consider the probability space $(\mathbb{R}^N, \mathbb{B}^N, \mu)$, where $\mu$ is either the joint eigenvalue
distribution of $\Xi$ or the distribution defined in \eqref{beta-ensemble}. If $f \colon \mathbb{R} \to \mathbb{R}$ is a
$\mathcal{C}^1$-smooth function, it is well-known that asymptotic normality
\begin{equation}
\label{S_N beta}
S_N = \sum_{j=1}^{N} (f(\lambda_j) - \mu[f(\lambda_j)]) \Rightarrow \mathcal{N}(0, \sigma_f^2)
\end{equation}
holds for the self-normalized linear eigenvalue statistics $S_N$. Here, ``$\Rightarrow$'' denotes weak convergence, $\mu[\cdot]$ means integration with
respect to $\mu$ and $\mathcal{N}(0, \sigma_f^2)$ denotes a normal distribution with mean zero and variance $\sigma_f^2$
depending on $f$.

This result goes back to K. Johansson \cite{J} for the case of $\beta$-ensembles and, for general Wigner matrices, A.\,M. Khorunzhy, B.\,A. Khoruzhenko and L.\,A. Pastur
\cite{K-K-P} as well as Ya. Sinai and A. Soshnikov \cite{S-S}. Such results
have been extensively studied since then. Concentration of measure results have been obtained by A. Guionnet and O. Zeitouni \cite{G-Z},
proving concentration inequalities centered at the mean using techniques by Talagrand and Ledoux discussed in the introduction.
In particular, they proved that $S_N$ has fluctuations of order $\mathcal{O}_P(1)$ if $f'$ is absolutely bounded. Here we can complement these results by a second order concentration bound which only requires $f''$ to be absolutely bounded.

\begin{proposition}
\label{secondorderbeta}
Let $\mu$ be the joint distribution of the ordered eigenvalues of $\Xi$ or the $\beta$-ensemble distribution defined in \eqref{beta-ensemble}. Let $f \colon
\mathbb{R} \to \mathbb{R}$ be a $\mathcal{C}^2$-smooth function with $f'(\lambda_j)
\in L^1(\mu)$ and second derivatives bounded by some constant $\gamma > 0$, and let
$\tilde{S}_N := S_N - \sum_{j=1}^{N} (\lambda_j - \mu[\lambda_j]) \mu [f'(\lambda_j)]$
with $S_N$ as in \eqref{S_N beta}. Then, we have
$$\int \exp(cN^{1/2} |\tilde{S}_N|) d\mu \le 2,$$
where $c = c(\gamma) > 0$ is some constant. If $\mu$ is the eigenvalue distribution
of $\Xi$, $c$ moreover depends on the Sobolev constant $\sigma^2$, and if $\mu$
is the $\beta$-ensemble distribution \eqref{beta-ensemble}, $c$ also depends on the
quantity $a$ from \eqref{conv}.
\end{proposition}

Proposition \ref{secondorderbeta} follows from Theorem \ref{kontinuierlich} and the fact that the Sobolev constant $\sigma_N^2$ is of
order $1/N$. In view of the self-normalized property of $S_N$, the fluctuation result for $\tilde{S}_N$ is of the next order,
although the scaling is of order $\sqrt{N}$ only.

Results of this type are useful in situations where $f'$ is not bounded (i.\,e. \cite{G-Z} cannot be applied), in particular if $f$ grows at most quadratically. In this case, concentration results for $S_N$ may be obtained by considering $\tilde{S}_N$ and the ``linear'' part separately. Controlling the linear part is usually an easy task, while $\tilde{S}_N$ can be handled by Proposition \ref{secondorderbeta}. The idea of splitting eigenvalue statistics into a ``linear'' term and a remainder also appears in the analysis of interacting particle systems. That is, in \eqref{beta-ensemble}, another quadratic ``interaction energy'' term of the form $\frac{1}{N} \sum_{i < j} h(\lambda_j-\lambda_i)$ is added to $\mathcal{H}(\lambda)$, where $h$ is a ``kernel'' function with suitable properties. These particle system have been studied by F. G\"{o}tze and M. Venker \cite{G-V}, including a concentration of measure result similar to our bounds for the recentered interaction energy $h$, removing both the expected value and a linear term (cf. e.\,g. (29) and Remark 4.8 there).

Indeed, second order results may also be used for establishing concentration bounds for quadratic eigenvalue statistics. The idea of studying higher order statistics of eigenvalues can already be found in A. Lytova and L. Pastur \cite{L-P}, where the authors proved a law of large numbers and a central limit theorem for $U$-statistics of eigenvalues. Following \cite{G-V}, an interesting question is whether the self-normalization phenomenon extends to what might be informally called a ``double self-normalization'' (at the level of the fluctuations, in our framework). That is, we shall examine whether quadratic statistics which are ``recentered'' in a suitable way may have fluctuations of a better order than $\mathcal{O}_P(N)$, i.\,e. possibly even $\mathcal{O}_P(1)$.

To this aim, we consider a
sufficiently smooth ``kernel'' function $g \colon \mathbb{R}^2 \to \mathbb{R}$ and
set
$T_N := \sum_{j \ne k} g(\lambda_j, \lambda_k).$
Rescaling $T_N - \mu[T_N]$, we arrive at asymptotic normality. To obtain second order
concentration of measure results, we shall also center around a linear correction
term according to \eqref{corrected}. That is, we define $Q_N = Q_N(\lambda)$ by
\begin{align}
\label{Q_N}
\begin{split}
Q_N :=  &\sum_{j \ne k} g(\lambda_j, \lambda_k) - \sum_{j \ne k}
\mu[g(\lambda_j, \lambda_k)]\\
&- \sum_{i=1}^{N}\big(\sum_{k: k \ne i} (\mu[g_x(\lambda_i,
\lambda_k)]+\mu[g_y(\lambda_k,\lambda_i)])\big)(\lambda_i - \mu[\lambda_i]).
\end{split}
\end{align}
Here, $g_x$, $g_y$ etc. denote partial derivatives. For instance, if $g(x,y) = xy$, $Q_N$ has the form
$$Q_N(\lambda) = \sum_{j \ne k} \big((\lambda_j - \mu[\lambda_j])(\lambda_k -
\mu[\lambda_k]) - \mu[(\lambda_j - \mu[\lambda_j])(\lambda_k -
\mu[\lambda_k])]\big).$$
In particular, this demonstrates that it is natural to remove a ``linear'' term in this context (also recall the discussion at the beginning of Section 1).

\begin{proposition}
\label{quadraticstat1}
Let $\mu$ be the joint distribution of the ordered eigenvalues of $\Xi$ or the
$\beta$-ensemble distribution defined in \eqref{beta-ensemble}. Let $g \colon
\mathbb{R}^2 \to \mathbb{R}$ be a $\mathcal{C}^2$-smooth function with first order
derivatives in $L^1(\mu)$ and second order derivatives bounded by some constant $\gamma > 0$. Consider $Q_N$ as defined in \eqref{Q_N}. Then,
for some constant $c = c(\gamma) > 0$,
\begin{align}\label{Aim}
\int \exp\left(\frac{c}{N^{1/2}}\lvert Q_N\rvert\right) d\mu \le 2.
\end{align}
In the special case of $g(x,y) := xy$, we have
\begin{align}\label{Aim2}
\int \exp\left(c\lvert Q_N\rvert\right) d\mu \le 2.
\end{align}
If $\mu$ is the eigenvalue distribution of $\Xi$, $c$ moreover depends on the
Sobolev constant $\sigma^2$, and if $\mu$ is the $\beta$-ensemble distribution
\eqref{beta-ensemble}, $c$ also depends on the quantity $a$ from \eqref{conv}.
\end{proposition}

\begin{proof}
To check the conditions of Theorem \ref{kontinuierlich}, note that the Hessian of $Q_N$ has entries
\begin{align*}
(Q_N'')_{ij} &= g_{xy}(\lambda_i, \lambda_j) + g_{yx}(\lambda_j,
\lambda_i),\qquad i \ne j,\\
(Q_N'')_{ii} &= \sum_{k: k \ne i}(g_{xx}(\lambda_i, \lambda_k) +
g_{yy}(\lambda_k, \lambda_i)).
\end{align*}
Using the boundedness of the second derivatives of $g$ it follows easily that
$\lVert Q_N'' \rVert_\mathrm{Op} \le cN$.
Here, $c = c(\gamma)$ denotes a numerical constant which will vary from line to line throughout the proof.

On the other hand, we have
\begin{align*}
\int \lVert Q_N'' \rVert_\mathrm{HS}^2 d\mu = &\sum_{i \ne j} \int
(g_{xy}(\lambda_i, \lambda_j) + g_{yx}(\lambda_j, \lambda_i))^2 d\mu\\
&+ \sum_{i=1}^{N} \int \Big(\sum_{k: k \ne i}(g_{xx}(\lambda_i, \lambda_k) +
g_{yy}(\lambda_k, \lambda_i))\Big)^2 d\mu.
\end{align*}
Here, the sum corresponding to the off-diagonal terms can clearly be bounded by $cN^2$, while the sum corresponding to the diagonal terms can only be bounded by $cN^3$ in general. Therefore,
$\int \lVert Q_N'' \rVert_\mathrm{HS}^2 d\mu \le cN^3$.

Finally, if $g(x,y) := xy$, we have $g_{xx} \equiv g_{yy} \equiv 0$, and consequently
$\int \lVert Q_N'' \rVert_\mathrm{HS}^2 d\mu \le cN^2$.
Applying Theorem \ref{kontinuierlich} finishes the proof.
\end{proof}

In case of $g(x,y) := xy$, by \eqref{Aim2}, $Q_N$ has fluctuations of order $\mathcal{O}_P(1)$, which can be regarded as an extension of the self-normalizing property to a second order situation at least on the level of the fluctuations of $Q_N$.

Unfortunately, this property does not seem to hold in full strength for general kernels $g$. To explain this, note that $Q_N$ may be decomposed into a ``pure'' quadratic part $D_N$ and a remainder term $Q_N - D_N$, where $D_N = D_N(\lambda)$ is given by
\begin{align}\label{D_N}
\begin{split}
D_N &= \frac{1}{2} \sum_{i=1}^{N} \mu[\partial_{ii} Q_N](\lambda_i^2 - 2\mu[\lambda_i]
\lambda_i + 2\mu[\lambda_i]^2 - \mu[\lambda_i^2]).\\
\end{split}
\end{align}
Arguing similarly as in the proof of Proposition \ref{quadraticstat1}, for an arbitrary kernel $g$, $Q_N - D_N$ has fluctuations of order $\mathcal{O}_P(1)$, while the fluctuations of $D_N$ are of a larger order $\mathcal{O}_P(N^{1/2})$. Therefore, we obtain a factor of $1/N^{1/2}$ in \eqref{Aim}. If $g(x,y) = xy$, we have $D_N \equiv 0$, and hence we arrive at a
different result \eqref{Aim2}.
On the other hand, considering the case of $g(x,y) = f(x)$ for some function $f$ with bounded second order derivatives, we arrive at Proposition \ref{secondorderbeta} again. In particular, this shows that in general, the additional factor $1/N^{1/2}$ in \eqref{Aim} cannot be removed.


\begin{thebibliography}{99}
	
	\bibitem[A-W]{A-W} Adamczak, R., Wolff, P.: \textit{Concentration inequalities for non-Lipschitz functions with bounded derivatives of higher order}. Probability Theory Related Fields 162(3) (2015), 531--586.
	\bibitem[A-M-S]{A-M-S} Aida, S., Masuda, T., Shikegawa, I.: \textit{Logarithmic Sobolev inequalities and exponential integrability}. J. Func. Anal. 126(1) (1994), 83--101.
	\bibitem[A-G]{A-G} Arcones, M.\,A., Gin\'{e}, E.: \textit{Limit theorems for $U$-processes}. Ann. Prob. 21(3) (1993), 1494--1542.
	\bibitem[B-G-Z]{B-G-Z} Bickel, P.\,J., G\"{o}tze, F., van Zwet, W.\,R.: \textit{Resampling fewer than $n$ observations: gains, losses, and remedies for losses}. Statistica Sinica 7(1) (1997), 1--32.
	\bibitem[B-C-G]{B-C-G} Bobkov, S.\,G., Chistyakov, G.\,P., G\"{o}tze, F.: \textit{Second-order concentration on the sphere}. Commun. Contemp. Math. 19(5), 1650058, 20 pp.
	\bibitem[B-G1]{B-G1} Bobkov, S.\,G., G\"{o}tze, F.: \textit{Exponential integrability and transportation cost related to logarithmic Sobolev inequalities}. J. Func. Anal. 163(1) (1999), 1--28.
	\bibitem[B-G2]{B-G2} Bobkov, S.\,G., G\"{o}tze, F.: \textit{Concentration inequalities and limit theorems for randomized sums}. Probability Theory Related Fields 137(1) (2007), 49--81.
	\bibitem[B-G3]{B-G3} Bobkov, S.\,G., G\"{o}tze, F.: \textit{Concentration of empirical distribution functions with applications to non-i.i.d. models}. Bernoulli 16(4) (2010), 1385--1414.
	\bibitem[B-G-S]{B-G-S} Bobkov, S.\,G., G\"{o}tze, F., Sambale, H.: \textit{Higher order concentration of measure}. Commun. Contemp. Math 21(3) (2019), 1850043, 36 pp.
	\bibitem[B-L]{B-L} Bobkov, S.\,G., Ledoux, M.: \textit{From Brunn-Minkowski to Brascamp-Lieb and to logarithmic Sobolev inequalities}. Geom. Funct. Anal. 10(5) (2000), 1028--1052.
	\bibitem[B-I]{B-I} Borodin, A.\,N., Ibragimov, I.\,A.: \textit{Limit theorems for functionals of random walks}. Trudy Mat. Inst. Steklov. 195 (1994), 3--285; Proc. Steklov Inst. Math. 195 (1995), 1--259.
	\bibitem[B-L-M]{B-L-M} Boucheron, S., Lugosi, G., Massart, P.: \textit{Concentration inequalities using the entropy method}. Ann. Prob. 31(3) (2003), 1583--1614.
	\bibitem[B-L-M2]{B-L-M2} Boucheron, S., Lugosi, G., Massart, P.: \textit{Concentration inequalities. A nonasymptotic theory of independence}. Oxford University Press, 2013.
	\bibitem[CE-F-M]{CE-F-M} Cordero-Erausquin, D., Fradelizi, M., Maurey, B.: \textit{The (B) conjecture for the Gaussian measure of dilates of symmetric convex sets and related problems}. J. Funct. Anal. 214(2) (2004), 410--427.
	\bibitem[D-G]{D-G} de la Pe\~{n}a, V., Gin\'{e}, E.: \textit{Decoupling. From Dependence to Independence}. Springer, 1999.
	\bibitem[G]{G} G\"{o}tze, F. IMS Bulletin (1993).
	\bibitem[G-V]{G-V} G\"{o}tze, F., Venker, M.: \textit{Local universality of repulsive particle systems and random matrices}. Ann. Prob. 42(6) (2014), 2207--2242.
	\bibitem[G-Z]{G-Z} Guionnet, A., Zeitouni, O.: \textit{Concentration of the spectral measure for large matrices}. Electron. Commun. Prob. 5 (2000), 119--136.
	\bibitem[H]{H} Hoeffding, W.: \textit{A class of statistics with asymptotically normal distribution}. Ann. Math. Statist. 19 (1948), 293--325.
	\bibitem[J]{J} Johansson, K.: \textit{On fluctuations of eigenvalues of random Hermitean matrices}. Duke Math. J. 91(1) (1998), 151--204.
	\bibitem[K-K-P]{K-K-P} Khorunzhy, A.\,M., Khoruzhenko, B.\,A., Pastur, L.\,A.: \textit{Asymptotic properties of large random matrices with independent entries}. J. Math. Phys. 37(10) (1996), 5033-5060.
	\bibitem[L1]{L1} Ledoux, M.: \textit{On Talagrand's deviation inequalities for product measures}. ESAIM Prob. \& Stat. 1 (1996), 63--87.
	\bibitem[L2]{L2} Ledoux, M.: \textit{Concentration of measure and logarithmic Sobolev inequalities. S\'{e}minaire de Probabilit\'{e}s XXXIII}. Lecture Notes in Math. 1709, 120--216. Springer, 1999.
	\bibitem[L3]{L3} Ledoux, M.: \textit{The Concentration of Measure Phenomenon}. American Mathematical Society, 2001.
	\bibitem[L-P]{L-P} Lytova, A., Pastur, L.: \textit{On asymptotic behavior of multilinear eigenvalue statistics of random matrices}. J. Stat. Phys. 133(5), 2008, 871--882.
	\bibitem[M]{M} Major, P.: \textit{On the estimation of multiple random integrals and $U$-statistics}. Lecture Notes in Math. 2079. Springer, 2013.
	\bibitem[P-R]{P-R} Politis, D.\,N., Romano, J.\,P.: \textit{Large sample confidence regions based on subsamples under minimal assumptions}. Ann. Statist. 22(4) (1994), 2031--2050.
	\bibitem[R-L-T-V]{R-L-T-V} Robins, J., Li, L., Tchetgen Tchetgen, E., van der Vaart, A.: \textit{Asymptotic Normality of Quadratic Estimators}. Stochastic Process. Appl. 126(12), 3733-3759.
	\bibitem[R-V]{R-V} Rudelson, M., Vershynin, R.: \textit{Hanson--Wright inequality and sub-gaussian concentration}. Electronic Communications in Probability 18(82) (2013), 1--9.
	\bibitem[S]{S} Sambale, H.: \textit{Second Order Concentration for Functions of Independent Random Variables}. Ph.D. thesis, Bielefeld Univ., 2015.
	\bibitem[S-S]{S-S} Sinai, Ya., Soshnikov, A.: \textit{A central limit theorem for traces of large random matrices with independent matrix elements}. Bol. Soc. Brasil. Mat (N.S.) 29(1) (1998), 1--24.
	\bibitem[T1]{T1} Talagrand, M.: \textit{Concentration of measure and isoperimetric inequalities in pro\-duct spaces}. Publications Math\'{e}matiques de l'I.H.E.S. 81(9) (1995), 73--205.
	\bibitem[T2]{T2} Talagrand, M.: \textit{New concentration inequalities in product spaces}. Invent. Math. 126 (1996), 505--563.

\end{thebibliography}
\end{document}